
\documentclass[letterpaper,10pt, journal,twoside]{IEEEtran}  
\pdfoutput=1

\IEEEoverridecommandlockouts                              



\usepackage{amsmath,amssymb,color,cite}
\usepackage{graphicx}
\usepackage{subfigure}

\usepackage{latexsym}
\usepackage{algorithm}
\usepackage{algpseudocode}

\makeatletter
\algnewcommand{\LineComment}[1]{\Statex \hfill \(\triangleright\) #1}
\makeatother

\usepackage{verbatim}
\usepackage{cite}
\usepackage{amsmath,amsthm}


\usepackage{pgf}
\usepackage{pgfplots,tikz}
\usetikzlibrary{tikzmark}
\usepackage[font=footnotesize]{caption}
\usepackage{arydshln}

\usepackage{amsfonts}
\usepackage{amssymb}
\usepackage{amsbsy}
\usepackage{bm}
\usepackage{bbm}

\renewcommand{\algorithmicrequire}{\textbf{Input: }}
\renewcommand{\algorithmicensure}{\textbf{Output: }}

\DeclareMathOperator{\im}{im}

\DeclareMathOperator{\rank}{rank}

\newcommand{\norm}[1]{\ensuremath{\| #1 \|}}

\newcommand\oprocendsymbol{\hbox{$\bullet$}}
\newcommand\oprocend{\relax\ifmmode\else\unskip\hfill\fi\oprocendsymbol}

\let\leq\leqslant
\let\geq\geqslant
\let\emptyset\varnothing

\newcommand{\R}{\mathbb R}
\newcommand{\N}{\mathbb N}
\newcommand{\bT}{\mathbb T}

\newcommand{\calM}{\ensuremath{\mathcal{M}}}
\newcommand{\calN}{\ensuremath{\mathcal{N}}}

\newcommand{\calP}{\ensuremath{\mathcal{P}}}

\newcommand{\calX}{\ensuremath{\mathcal{X}}}


 \newcounter{todocounter}
 \setcounter{todocounter}{-1}

 \usepackage[colorinlistoftodos]{todonotes}

\newtheorem{theorem}{Theorem}[section]

\newtheorem{lemma}[theorem]{Lemma}
\newtheorem{corollary}[theorem]{Corollary}

\theoremstyle{remark}
\newtheorem{remark}[theorem]{Remark}

\theoremstyle{definition}
\newtheorem{assumption}{Assumption}
\newtheorem{definition}[theorem]{Definition}

\newtheorem{problem}{Problem}

\allowdisplaybreaks

\title{Data-driven mode detection and stabilization of unknown
  switched linear systems\thanks{A preliminary version of this work
    appeared as~\cite{JE-SL-SM-JC:22-cdc} at the IEEE Conference on
    Decision and Control. This work was partially supported by AFOSR
    Award FA9550-19-1-0235.}}

\author{Jaap Eising${}^*$ \quad Shenyu Liu${}^*$ \quad Sonia
  Mart\'{\i}nez \quad Jorge Cort\'{e}s\thanks{${}^*$Both authors
    contributed equally. Jaap Eising is with
    the Department of Information Technology and Electrical
    Engineering at ETH Z\"{u}rich, Switzerland,
    \texttt{jeising@ethz.ch}. Shenyu Liu is with
    the School of Automation, Beijing Institute of Technology, China,
    \texttt{shenyuliu@bit.edu.cn}.  Sonia Mart\'{\i}nez and Jorge
    Cort\'{e}s are with the Department of Mechanical and Aerospace
    Engineering, University of California, San Diego,
    \texttt{\{soniamd,cortes\}@ucsd.edu}.}}

\begin{document}

\maketitle
	
\begin{abstract}
  This paper considers the stabilization of unknown switched linear
  systems using data.  Instead of a full system model, we have access
  to a finite number of trajectories of each of the different modes
  prior to the online operation of the system.  On the basis of
  informative enough measurements, we design an online switched controller
  that alternates between a mode detection phase and a stabilization
  phase.  Since the currently-active mode is unknown, the
  controller employs online measurements to determine
  it by implementing computationally efficient tests that check
  compatibility with the set of systems consistent with the
  pre-collected measurements. The stabilization phase applies a
  stabilizing feedback gain corresponding to the identified active
  mode and monitors the evolution of the associated Lyapunov function
  to detect switches. When a switch is detected, the controller
  returns to the mode-detection phase.  Under average dwell- and
  activation-time assumptions on the switching signal, we show that
  the proposed controller guarantees a {\color{black}practical} stability
  property of the closed-loop switched system. Various simulations illustrate
  our results.
\end{abstract}



\section{Introduction}
Switched linear systems have long been of interest to the systems and
control community. Such systems consist of several modes and a logic
rule which governs the switching between them.  Many real-world
applications are naturally modeled as switched systems, where the
plant switches modes due to design specifications, recurrent
environmental effects, human behavior, or a combination of
thereof. From a dynamic perspective, switched linear systems exhibit
far more complex behavior than linear systems and this makes the
design of stabilizing controllers and their analysis challenging.
Most design approaches build on the model-based paradigm, where a
model of the system and each of its modes is available to solve the
stabilization problem.  In practice, this often requires considerable
modeling effort through system identification.  Motivated by these
observations, we adopt an {\color{black} hybrid online-offline}
approach based on data informativity to synthesize a controller that
jointly deals with the (partial) identification and (robust)
stabilization of the unknown switched system.

\emph{Literature review:} The control of switched systems is a
well-studied field, the complexity of which requires a wide range of
stabilization and analysis techniques, see
e.g.,~\cite{DL:03,HL-PJA:09,FB-SM-CS:07}. The literature typically
addresses two primary types of stabilization problems concerning
switched systems. In the first scenario, the switching signal itself
is regarded as the control input, either in an open- or closed-loop
setting, with the objective of stabilizing the
system~\cite{MSB:98,JG-PC:06,PC-JG-AA:08}. In the second scenario, the
switching signal is predetermined, and the focus shifts to designing
an external input to control the system. {\color{black} When the
  switching signal can be observed in real time by the controller,
  methods can be delineated between those working for switching
  signals with upper bounds on the switching
  frequency~\cite{YS:11switched} and those for arbitrary
  switching~\cite{JWL-GED:06,DZ-LA-JD-QZ:18}. On the other hand, when
  the switching signal is unknown to the controller, control methods
  which rely on common Lyapunov functions~\cite{MH-FF-GD:13} or
  path-complete Lyapunov functions~\cite{MDR-TAL-MJ-RMJ:24} are
  developed.} It is important to note that Lyapunov approach requires
specific structural attributes of the system dynamics and hence such
methods do not apply in every case. To address this limitation and
stabilize a broader class of switched systems with unknown switching
signals, adaptive estimation techniques \cite{DC-LG-YL-YW:05} or
switch detection mechanisms become necessary. Inspired by the
literature on fault detection~\cite{MTR-AQK-GM-MA:16,LL-SXD-YN-JQ:22}
and state estimation~\cite{XZ-HL-JZ-HL:15} for switched systems, we
can employ Lyapunov-based methods to detect unknown
switches. Nevertheless, all the aforementioned works usually require a
precise model of the modes of the switched system. In order to address
uncertainties in the model, several works have employed robust and
adaptive techniques for stabilization.  For
instance,~\cite{HL-PJA:07,LIA-US:13} develops a stabilizing controller
of the switched system regardless of the realization of certain
parameters within a given range.  The work~\cite{SY-BDS-SB:17}
proposes an alternative approach to the same problem based on adaptive
controllers. Another relevant angle is the use of switched controllers
in~\cite{BDOA-TSB-FDB-JH-DL-ASM:00} for robustly stabilizing
non-switched systems.

Robust and adaptive methods still rely on a nominal model and, as
such, require some type of system identification. To circumvent this,
and leverage the development of new computation and data acquisition
methods, there is a growing adoption of data-driven techniques. 
The works~\cite{CZ-MG-JZ:19,AK:20arXiv,AS-CA-FG:23arXiv} address
  the first type of problem, wherein the switching signal itself
  serves as the controlling element. The second scenario, stabilizing
  an arbitrarily switched system via an external input, is
  investigated in~\cite{TD-MS:18,TD-MS:22} when the switching signal
  is known, and in~\cite{ZW-GOB-RMJ:21arXiv} when the switching signal
  is unknown. The recent work~\cite{VB-SF:20,XW-JS-GW-FA-JC:2022} also
  delves into the data-based design of switching controllers.

An alternative approach that has
recently gained traction is based on Willems' fundamental
lemma~\cite{JCW-PR-IM-BLMDM:05}.
Within this broader context, the work~\cite{MR-CDP-PT:22} proposes a
stabilizing controller for an unknown switched system, which is found
only on the basis of noiseless measurements of the currently active
mode of the system. Our treatment here leverages the informativity
approach to data-driven control, as introduced in
\cite{HJVW-JE-HLT-MKC:20} and recently extended in
\cite{HJVW-MKC-JE-HLT:22,JE-JC:23-tac}. In simple terms, this method
characterizes the set of systems that are compatible with collected
data, and produces a robust controller that stabilizes \textit{all}
the systems in this class. Within this framework, the
work~\cite{MB-SG-JC:22} considers the problem of data-driven
stabilization in the situation where the switching signal is
determined by the controller.

\emph{Statement of contributions:} We consider the problem of
stabilizing a switched system subject to an unknown switching signal
and whose modes are unmodeled.
Given that the switching signal is unknown, we require all modes to be
stabilizable.  Using the informativity framework, we first use
pre-collected measurements to determine separate robust stabilizing
feedback gains for each mode. Our online stabilizing controller design
employs online data to switch between a \textit{mode detection phase}
and a \textit{stabilization phase}. In the former, we use the most
recent online measurements to uniquely determine the mode of the
system currently active. We provide an algorithm that, for each time
instance, applies bounded inputs to excite the system and evaluates if
the new measurements obtained are compatible with the set of systems
consistent with the pre-collected measurements of the assumed active
mode. If this is not the case, then the assumed mode is not currently
active. We ensure the computational efficiency of this test by
providing conservative, but efficient, outer approximations to relax
the testing of non-emptiness of the intersection of convex sets.  Once
the active mode is determined, the controller switches to the
stabilization phase, which applies the corresponding stabilizing
feedback gain and monitors the evolution of the associated Lyapunov
function to detect switches. When a switch is detected, the controller
returns to the mode-detection phase.  We study the practical stability
of the closed loop of the unknown system under the proposed online
switched controller and show that, under average dwell- and
activation-time assumptions on the switching signal, it enjoys a 
{\color{black}practical} stability property. In particular, when
measurements are noiseless, the closed loop is asymptotically stable.

We have presented preliminary results of this work in the
  conference article~\cite{JE-SL-SM-JC:22-cdc}, where we only
  considered the case of noiseless data and established practical
  stability of the closed-loop system. The main contribution of the
  present treatment is considering noisy measurements and solving a
  more general problem, with stronger stability results. Dealing with
  noise requires significant extensions to all the ingredients of the
  approach, a complete generalization of the results for the
  initialization step, and an entirely new set of techniques for mode
  detection. These, along with refinements in the choice of bounded
  inputs to excite the system, lead to stronger practical stability
  results, yielding in the noiseless case asymptotic stability of the
  closed-loop system. The stronger stability results obtained here are
  illustrated in an extensive set of novel simulation results.


\section{Problem formulation}\label{sec:notion}

Consider\footnote{Throughout the paper, we use the following
  notation. We denote by $\N$ and $\R$ the sets of non-negative
  integer and real numbers, respectively. We let $\R^{n\times m}$
  denote the space of $n\times m$ real matrices. For any
    $x\in\R^n$, $\Vert x\Vert$ denotes the standard 2-norm and for any
    $M\in\R^{n\times m}$, $\Vert M\Vert$ denotes the induced norm.
  For $P\in\R^{n\times n}$, $P\succeq 0$ (resp. $P\succ 0$) denotes
  that $P$ is positive semi-definite (resp. definite).} a
discrete-time switched linear system with $p$ modes of the form
\begin{equation}\label{def:sys}
  x(t+1)=\hat{A}_{\sigma(t)}x(t)+\hat{B}_{\sigma(t)}u(t) +w(t),
\end{equation}
where $x(t)\in\R^n$ is the state, $u(t)\in\R^m$ is the control input,
and $w(t)\in\mathbb{R}^n$ is a noise signal. We will assume that
  the signal $w$ is bounded, but make no assumptions regarding
  randomness. Here, $\sigma:\N\rightarrow\calP:=\{1,2,\cdots,p\}$ is
the switching signal. For all $i\in\calP$, the matrices $\hat{A}_i$
and $\hat{B}_i$ are in $\mathbb{R}^{n\times n}$ and
$\mathbb{R}^{n\times m}$, respectively.

We are interested in finding a controller on the basis of
  measurements such that the closed-loop system is \emph{practically
    exponentially stable}, i.e., there exist constants $c>0$,
  $\zeta\in(0,1)$ and $r\geq0$ (depending on a bound on the noise)
  such that its trajectories satisfy
  \begin{equation}\label{ISS-estimate_0}
    \norm{x(t)}\leq c \, \zeta^t\norm{x(0)}+r ,
  \end{equation}
  for all initial states $x(0)\in\R^n$ and all time $t\in\N$.

To be precise, we assume that the dimensions $n$, $m$ and the number
of modes $p$ are known, but that the precise dynamics of the modes are
not available for design, that is, for each mode $i\in\calP$, the
matrices $\hat{A}_i$ and $\hat{B}_i$ are unknown. To offset this lack
of knowledge, we have access to a finite set of measurements of the
state and input trajectories, $x(t),u(t)$, corresponding to an
  unknown but bounded noise signal $w(t)$. Specifically, we assume
that, before the online operation of the system, we perform an
\textit{initialization} step, where we obtain measurements of each of
the individual modes (but there is not necessarily enough data to
identify the dynamics of each separate mode).  After this, once the
system is running, we have access to \textit{online} measurements of
the currently active mode. Our goal is formalized as follows.

  \begin{problem}[Online switched controller design]\label{problem}
    Given initialization and online
    measurements, design a control law $u:\N\rightarrow\R^m$ such that
    the resulting interconnection {\color{black}with} \eqref{def:sys} is practically
    exponentially stable, cf~\eqref{ISS-estimate_0}. For each
    $t\in \N$, the control $u(t)$ only depends on (but not necessarily
    all) the following data:
    \begin{itemize}
    \item Current state $x(t)$;
    \item Measurements collected during the initialization step;
    \item Online measurements
      $x(s),u(s), s\in\{t-T,t-T+1,\ldots,t-1\}\cap\N$ for some
      $T\in\N\backslash\{0\}$.
    \end{itemize}
\end{problem}

{\color{black} In particular, this means that we aim to develop a
  controller which does not use the full, unbounded history of data
  collected from the system, and hence has bounded complexity in terms
  of computational and memory cost.}

{\color{black} We consider this problem in both the absence and
  presence of bounded noise. Moreover, we consider two different cases
  regarding the switching signal. At first, we consider the situation
  where the controller is aware of \textit{when} the systems switches
  modes, but not to which mode. After this, we explore the situation
  where the switching signal is completely unknown. In order to
  guarantee stabilization in this case, we require certain regularity
  assumptions on the switching signal. To be precise, we assume that a
  switch does not happen during the collection of online measurements,
  and that we know bounds on the average switching frequency and
  elapsed time between switches. }

In order to solve Problem~\ref{problem}, we employ the following
multi-pronged approach. Since unique models for each of the modes
cannot necessarily be determined, we employ the concept of
\textit{data informativity} to formulate conditions under which the
initialization measurements guarantee the existence of a stabilizing
feedback controller for each of the modes. This problem is solved
  in Section~\ref{sec:initialization}. Based on this, we describe a
  switched controller which operates in two phases. In the
  \textit{mode detection phase}, the controller selects inputs which
  allow us to determine the active mode. 
  The technical details of this phase are derived in
  Section~\ref{sec:MD}. After this, and once the active mode is
  identified, the controller switches to the \textit{stabilization
    phase}, where the controller found in the initialization step
  corresponding to the mode is applied. Describing this phase is done
  in Section~\ref{sec:stabilization}, and the remainder of the paper
  deals with analyzing the closed-loop behavior resulting from the
  proposed controller.

\section{Initialization step}\label{sec:initialization}

We begin our analysis with the initialization step, where we consider
the problem of finding stabilizing controllers from pre-collected
measurements of the system. To formalize the notion of informativity,
we require some notation. For simplicity of exposition, we first
consider a single linear system, then shift our focus to switched
systems.

Consider measurements of the state and input signals $x$ and $u$ of a
system
\[
  x(t+1) = \hat{A}x(t) + \hat{B}u(t) +w(t),
\] 
on the time interval $\{0,\ldots, T\}$. We define matrices
\begin{align*}
  X 	&:=\begin{bmatrix}x(0) & \cdots & x(T) \end{bmatrix},\\ 
  X_- 	&:=\begin{bmatrix}x(0) & \cdots & x(T-1) \end{bmatrix},\\ 
  X_+ 	&:=\begin{bmatrix}x(1) & \cdots & x(T) \end{bmatrix},\\ 
  U_-	&:=\begin{bmatrix}u(0) & \cdots & u(T-1)\end{bmatrix},\\
  W_-	&:=\begin{bmatrix}w(0) & \cdots & w(T-1)\end{bmatrix}.
\end{align*}
We assume that the state and input measurements $(U_-,X)$ are known,
but that the noise signal is unknown. {\color{black} However, the noise
  satisfies the following assumption.
  
  \begin{assumption}[The noise model]
    The matrix $W_-$ collecting the noise signal is such that
    \begin{equation}\label{eq:noise model}
      \begin{bmatrix}
        I_n
        \\
        W_-^\top
      \end{bmatrix}^\top
      \begin{bmatrix}
        \Pi_{11} &
                   \Pi_{12}
        \\ \Pi_{21} &
                      \Pi_{22}
      \end{bmatrix}
      \begin{bmatrix}
        I_n
        \\
        W_-^\top
      \end{bmatrix}
      \succeq 0,
    \end{equation}
    where $\Pi_{11}\in\mathbb{R}^{n\times n}$ and
    $\Pi_{22}\in\mathbb{R}^{T\times T}$ are symmetric, and
    $\Pi_{12}=\Pi_{21}^\top\in\mathbb{R}^{n\times T}$.  Moreover,
    throughout the paper, we assume $\Pi_{22}\prec 0$ and
    $\Pi_{11}- \Pi_{12}\Pi_{22}^{-1}\Pi_{21}\succeq 0$.
  \end{assumption}

  These latter assumptions guarantee that the set of matrices $W_-$
  for which \eqref{eq:noise model} holds is nonempty and
  bounded~\cite[Theorem 3.2]{HJVW-MKC-JE-HLT:22}.}

We note that such sets can naturally be used as confidence intervals
in the setting of Gaussian noise (see
\cite[Section~5.4]{HJVW-MKC-JE-HLT:22}). Moreover, these sets arise
when we bound the energy of the signal. To see this, note that
\[
  \sum_{t=0}^{T-1} w(t)w(t)^\top = W_-W_-^\top \preceq Q,
\]
is equivalent to a noise model \eqref{eq:noise model} with
\begin{equation}\label{eq:noise model restricted}
  \Pi
  := \begin{bmatrix}
       \Pi_{11} & \Pi_{12}
       \\
       \Pi_{12}^\top & \Pi_{22}
     \end{bmatrix}
     =
     \begin{bmatrix}
       Q & 0
       \\
       0 & -I_{T}
     \end{bmatrix}.
   \end{equation} 
   {\color{black} This latter form can also be employed to handle
     sample bounded noise. To be precise, one can derive that:  
     \[
       \norm{w(t)}_2 \leq q \quad \forall t = 1,\ldots, T \implies
       W_-W_-^\top \preceq q^2T I_n.
     \]
    However, this is conservative, given that the reverse implication
   does not hold.}
   


We can now define the set of systems \textit{consistent} with the
measurements as
\begin{align*}
	\Sigma(U_-,X)& :=\ \big\{(A,B):
	\ \exists W_-\in\R^{n\times T}
	\\&\mbox{ s.t. }X_+ = AX_- +  BU_- +W_- \,\mbox{ and }
      \eqref{eq:noise model} \mbox{ hold.}\big\} 
\end{align*}
Clearly, since the measurements satisfy the noise model and are
collected from the true system, we know that $(\hat{A},\hat{B})$ is
contained in the set $\Sigma(U_-,X)$. If we now define the matrix
\begin{equation}\label{def:N}
  N := \begin{bmatrix} I_n\!\!
    &X_+\\0\!\!&-X_-\\0\!\!&-U_-
  \end{bmatrix}\!\!\begin{bmatrix} \Pi_{11} & \!\!\Pi_{12} \\ \Pi_{21}
    & \!\!\Pi_{22} \end{bmatrix}\!\!\begin{bmatrix} I_n
    \!\!&X_+\\0\!\!&-X_-\\0\!\!&-U_-
  \end{bmatrix}^\top,
\end{equation}
it is straightforward to conclude that the set $\Sigma(U_-,X)$ can be
equivalently represented as 
\begin{equation}\label{eq:Sigma as QMI}
  \Sigma(U_-,X) =\bigg \lbrace
    (A,B) \mid \begin{bmatrix} I_n \\ A^\top \\
      B^\top \end{bmatrix}^\top N \begin{bmatrix} I_n \\
      A^\top\\B^\top \end{bmatrix}\succeq 0 \bigg\rbrace.
\end{equation}
We are interested in characterizing properties of the true system
based on the measurements. However, the set $\Sigma(U_-,X)$ might
contain other systems in addition to the true one. This means, for
instance, that we can only conclude that a feedback gain $K$
stabilizes the true system if this gain stabilizes any system whose
system matrices are in $\Sigma(U_-,X)$. We are interested in
determining when the available data is informative enough to allow us
to accomplish this.

\begin{definition}[Informativity for uniform
  stabilization]\label{def:informativity for uniform stabilization}
  The data $(U_-,X)$ is \textit{informative for uniform stabilization
    by state feedback with decay rate} $\lambda\in(0,1)$ if there
  exist $K\in\R^{m\times n}$ and $P\in\R^{n\times n}, P\succ 0$ such
  that
  \begin{equation}\label{eqn:informativity for uniform stabilization}
    (A+BK)^\top P (A+BK)\prec\lambda^2 P , \quad\forall (A,B)\in\Sigma(U_-,X).
  \end{equation}
\end{definition}

Let $V(x):=\sqrt{x^\top Px}$ and consider the closed loop of any
system consistent with the data and the controller $u=Kx$. Then, if
the data is informative,
\begin{align}\label{eq:decay-rate}
  V(x(t+1))
    & \leq \lambda \sqrt{x(t)^\top P x(t)}=\lambda V(x(t)),
\end{align}	
for all $t \in \N$. This means that, even if the matrices $(A,B)$ can
not be identified uniquely from the measurements, the feedback gain
$K$ stabilizes the system with Lyapunov function $V$ and decay
rate $\lambda$.

To determine whether the data $(U_-,X)$ is informative, we can exploit
the fact that \eqref{eq:Sigma as QMI} and \eqref{eqn:informativity for
  uniform stabilization} are quadratic matrix inequalities in $A$ and
$B$. The following result
extends~\cite[Theorem~5.1.(a)]{HJVW-MKC-JE-HLT:22}, which essentially
considers the special case $\lambda=1$, to reduce the problem to that
of finding matrices $K$ and $P$ that satisfy a \emph{linear matrix
  inequality} (LMI).

\begin{theorem}[Conditions for informativity for uniform
  stabilization]\label{thm:common_K}
  The data $(U_-,X)$ is informative for uniform stabilization by state
  feedback with decay rate $\lambda$ if and only if there exist
  $Q\in\R^{n\times n}$, with $Q\succ 0$, $L\in\R^{m\times n}$ and $\beta>0$ such that
  \begin{equation}\label{eqn:common_K_LMI}
    \small \!\begin{bmatrix}
      \!\lambda^2 Q\!-\!\beta I_n\!\!\!\!\!& 0 & 0 &0\\0&0&0 &Q\\0&0&0&L\\0&Q&\! L^\top\!\!\! &Q
    \end{bmatrix}-\begin{bmatrix} I_n\!\! &X_+\\0\!\!&\!-X_-\\0\!\!&\!-U_-\\0\!\!&0
    \end{bmatrix}\!\!\!\begin{bmatrix} \Pi_{11} & \!\!\Pi_{12} \\ \Pi_{21} & \!\!\Pi_{22} \end{bmatrix}\!\!\!\begin{bmatrix} I_n \!\!&X_+\\0\!\!&\!-X_-\\0\!\!&\!-U_-\\0\!\!&0
\end{bmatrix}^{\!\!\top}\!\!\!\!\succeq 0.
\normalsize
  \end{equation}
  Moreover, the matrices $K:=LQ^{-1}$ and $P:=Q^{-1}$
  satisfy~\eqref{eqn:informativity for uniform stabilization}.
\end{theorem}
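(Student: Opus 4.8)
\emph{Plan.} I would derive the statement from its decay-rate-one specialization \cite[Theorem~5.1.(a)]{HJVW-MKC-JE-HLT:22} by rescaling the unknown system matrices, after checking the ``if'' direction (and the closing claim about $K$, $P$) by a short direct computation. Two observations will be used repeatedly: (i) since the last block row of $\begin{bmatrix} I_n & X_+\\ 0 & -X_-\\ 0 & -U_-\\ 0 & 0\end{bmatrix}$ vanishes, the second term in \eqref{eqn:common_K_LMI} equals $\bdiag(N,0)$ with $N$ the matrix \eqref{def:N} appearing in the quadratic-matrix-inequality description \eqref{eq:Sigma as QMI} of $\Sigma(U_-,X)$; and (ii) for $Q\succ 0$ and any square $M$, the inequalities $MQM^\top\prec\lambda Q$ and $M^\top Q^{-1}M\prec\lambda Q^{-1}$ are equivalent — conjugating both by $Q^{-1/2}$ turns them into $NN^\top\prec\lambda I$ and $N^\top N\prec\lambda I$ with $N:=Q^{-1/2}MQ^{1/2}$.

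\emph{Sufficiency and the ``moreover'' part.} Given $Q\succ 0$, $L$, $\beta>0$ feasible for \eqref{eqn:common_K_LMI}, I would set $P:=Q^{-1}$ and $K:=LQ^{-1}$, so that $AQ+BL=(A+BK)Q$, fix $(A,B)\in\Sigma(U_-,X)$, and multiply \eqref{eqn:common_K_LMI} on the left by $\begin{bmatrix} I_n & A & B & 0\\ 0 & 0 & 0 & I_n\end{bmatrix}$ and on the right by its transpose. Block multiplication, recognizing the resulting quadratic form in $N$ as the (nonnegative) quantity $\Delta:=\begin{bmatrix} I_n\\ A^\top\\ B^\top\end{bmatrix}^{\top}N\begin{bmatrix} I_n\\ A^\top\\ B^\top\end{bmatrix}$ that certifies $(A,B)\in\Sigma(U_-,X)$ via \eqref{eq:Sigma as QMI}, yields
\[
  \begin{bmatrix} \lambda Q-\beta I_n-\Delta & (A+BK)Q\\ Q(A+BK)^\top & Q\end{bmatrix}\succeq 0 ,\qquad \Delta\succeq 0 .
\]
Since $Q\succ 0$, the Schur complement with respect to the lower-right block gives $(A+BK)Q(A+BK)^\top\preceq\lambda Q-\beta I_n-\Delta\prec\lambda Q$, and by observation (ii) this is equivalent to $(A+BK)^\top P(A+BK)\prec\lambda P$. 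As $(A,B)\in\Sigma(U_-,X)$ was arbitrary, the data is informative and $(K,P)$ is as claimed.

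\emph{Necessity.} Conversely, assuming $K$, $P\succ 0$ validate \eqref{eqn:informativity for uniform stabilization}, I would use that $A+BK=\sqrt\lambda\bigl((A/\sqrt\lambda)+(B/\sqrt\lambda)K\bigr)$ to rewrite \eqref{eqn:informativity for uniform stabilization} as $(\check A+\check BK)^\top P(\check A+\check BK)\prec P$ for every $(\check A,\check B)$ with $X_+=\check A(\sqrt\lambda X_-)+\check B(\sqrt\lambda U_-)+W_-$ and $W_-$ obeying \eqref{eq:noise model}; that is, $K$ uniformly stabilizes, with decay rate $1$, every system consistent with the rescaled data $(\sqrt\lambda\,U_-,\check X)$, $\check X_-:=\sqrt\lambda X_-$, $\check X_+:=X_+$, for the \emph{same} noise model $\Pi$ (so the standing assumptions are untouched). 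Applying \cite[Theorem~5.1.(a)]{HJVW-MKC-JE-HLT:22} to this rescaled data produces $\tilde Q\succ 0$, $\tilde L$, $\tilde\beta>0$ satisfying the corresponding decay-rate-one LMI. Performing on that LMI the congruence by $\bdiag(I_n,\tfrac{1}{\sqrt\lambda}I_{2n+m})$ — which returns its data term to $\bdiag(N,0)$ — and then substituting $Q:=\lambda^{-1}\tilde Q$, $L:=\lambda^{-1}\tilde L$, $\beta:=\tilde\beta$ (a bijection preserving $Q\succ 0$, $\beta>0$) reproduces exactly \eqref{eqn:common_K_LMI}; one checks $LQ^{-1}=\tilde L\tilde Q^{-1}$, which matches the ``moreover'' part.

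\emph{Where the difficulty lies.} The scaling and congruence bookkeeping is routine but error-prone, so the powers of $\lambda$ must be tracked with care. The genuinely nontrivial ingredient sits inside \cite[Theorem~5.1.(a)]{HJVW-MKC-JE-HLT:22}: its ``only if'' part rests on the non-conservative matrix S-lemma, whose hypotheses — the sign/kernel conditions on $N$ together with a generalized Slater point — are precisely what the standing assumptions $\Pi_{22}\prec 0$ and $\Pi_{11}-\Pi_{12}\Pi_{22}^{-1}\Pi_{21}\succeq 0$ deliver. Equivalently, one can bypass the reduction and repeat the proof of \cite[Theorem~5.1.(a)]{HJVW-MKC-JE-HLT:22} with $\lambda$ carried along — the change of variables $Q=P^{-1}$, $L=KQ$, a Schur complement, and the matrix S-lemma to remove the quantifier over $\Sigma(U_-,X)$ — the strict positivity of the multiplier $\beta$ being what yields the strict inequality in \eqref{eqn:informativity for uniform stabilization}.
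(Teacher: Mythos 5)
Your proposal is correct, but it reaches the ``only if'' direction by a genuinely different route than the paper. The paper carries $\lambda$ through the entire argument: two Schur complements turn \eqref{eqn:informativity for uniform stabilization} into $\lambda Q-(A+BK)Q(A+BK)^\top\succ 0$ with $Q=P^{-1}$, the quantifier over $\Sigma(U_-,X)$ is then removed in one stroke by the non-conservative matrix S-lemma \cite[Cor.~4.13]{HJVW-MKC-JE-HLT:22} (producing the multiplier $\alpha\geq 0$ and the slack $\beta>0$, with $\alpha$ subsequently shown nonzero and scaled away), and a final Schur complement with $L=KQ$ yields \eqref{eqn:common_K_LMI}; both directions fall out of this single chain of equivalences. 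You instead split the two directions: sufficiency by the direct congruence with $\begin{bmatrix} I_n & A & B & 0\\ 0&0&0&I_n\end{bmatrix}$ followed by a Schur complement (your bookkeeping here is right, including the identification of the data term with $\bdiag(N,0)$ and the duality $MQM^\top\prec\lambda Q\Leftrightarrow M^\top Q^{-1}M\prec\lambda Q^{-1}$), and necessity by rescaling $(\check A,\check B)=(A,B)/\sqrt{\lambda}$, $\check X_-=\sqrt{\lambda}X_-$, $\check U_-=\sqrt{\lambda}U_-$ so as to invoke \cite[Thm.~5.1.(a)]{HJVW-MKC-JE-HLT:22} verbatim at decay rate one, then undoing the scaling by the congruence $\bdiag(I_n,\lambda^{-1/2}I_{2n+m})$ and the substitution $Q=\lambda^{-1}\tilde Q$, $L=\lambda^{-1}\tilde L$; I checked the powers of $\lambda$ and they work out, including $LQ^{-1}=\tilde L\tilde Q^{-1}$. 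What your route buys is that the S-lemma is used only as a black box inside the cited theorem; what it costs is a small technical caveat you should make explicit: the rescaled triple $(\check X_+,\check X_-,\check U_-)$ no longer arises from a single state trajectory (the overlapping columns of $X_-$ and $X_+$ are scaled differently), so you must note that \cite[Thm.~5.1.(a)]{HJVW-MKC-JE-HLT:22} and the consistency set depend on the data only through the separate matrices $X_+,X_-,U_-$ (equivalently, through $N$), which is indeed the case. Your closing remark correctly identifies the paper's actual proof as the ``carry $\lambda$ along'' alternative.
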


Theorem~\ref{thm:common_K} can be proven with the same argument as
  in the proof of \cite[Theorem~5.1.(a)]{HJVW-MKC-JE-HLT:22}, making
  minor adjustments regarding the stability notion and related steps.
The characterization given in this result provides the backbone of the
initialization step for our controller design.  As noted in the
problem formulation, we have access to measurements of each of the
modes in the initialization step.  We denote the measurements
corresponding to mode $i\in\calP$ by $(U_-^i,X^i)$. We consider the
corresponding matrices $U_-^i$, $X^i$ and a noise model given
by~$\Pi^i$. Since we are interested in stabilizing the unknown
switched system regardless of the switching signal, this implies that,
each of the modes must be stabilizable. This motivates the following
assumption.

\begin{assumption}[Initialization step]\label{ass:ass_on_data-I}
  Let $\lambda\in (0,1)$.  For each mode $i\in\calP$, the data
  $(U_-^i,X^i)$ is informative for uniform stabilization by state
  feedback with decay rate $\lambda$.
\end{assumption}

 \begin{remark}[Associating measurements to modes in the
    initialization step]
    Implicitly, Assumption~\ref{ass:ass_on_data-I} requires that we
    know to which mode a given set of measurements corresponds.  If
    this were not the case, one could devise a clustering method in
    order to assign measurements collected in an unstructured set to
    the separate modes. For a number of realistic scenarios, this is
    indeed an important first step but we do not pursue it here
    because of the many extra technicalities required by clustering.
    \oprocend
\end{remark}

We denote by $K_i$ any feedback corresponding to $(U_-^i,X^i)$
obtained from Theorem~\ref{thm:common_K}. The corresponding Lyapunov
matrix is denoted by $P_i$. This means that we have (potentially)
different feedback and Lyapunov matrices for each mode, but that the
decay rate is uniform for all modes $i\in\calP$. Note that the choice
of a common decay rate across the modes does not introduce
conservatism, since if \eqref{eqn:informativity for uniform
  stabilization} holds for $\lambda$, it also holds for any
$\bar{\lambda}$ with $\lambda\leq \bar{\lambda}<1$.

\section{Mode detection phase}\label{sec:MD}

After the initialization step, we consider the situation where the
unknown system is in operation and additional \textit{online}
measurements, denoted by
$(U^{\textnormal{on}}_-,X^{\textnormal{on}})$, are collected. As before, we let
$\Sigma(U^{\textnormal{on}}_-, X^{\textnormal{on}})$ denote the set of
all systems compatible with the online measurements. To simplify the
exposition, we introduce for each $i\in\calP$ the following shorthand
notation,
\[
  \Sigma^i := \Sigma(U^i_-,X^i), \quad \Sigma^{\textnormal{on}} :=
  \Sigma(U^{\textnormal{on}}_-, X^{\textnormal{on}}).
\]

Recall that we aim at stabilizing the switched system.  Using the
  previously derived results, we could simply test whether the online
  measurements $(U^{\textnormal{on}}_-, X^{\textnormal{on}})$ are
  informative for uniform stabilization with decay rate~$\lambda$.
However, the initialization data gives us additional information that
can be exploited: we know that the true system is contained in
$\Sigma^i\cap\Sigma^{\textnormal{on}}$ for at least one $i\in\calP$.
If we could uniquely determine which of the $p$ different systems has
generated the online measurements
$(U_-^{\textnormal{on}},X^{\textnormal{on}})$, we can then apply the
stabilizing controller found in the initialization step.  The notion
of data compatibility plays a key role in achieving this goal.

\begin{definition}[Data compatibility]
  The data pairs $(U_-^i,X^i)$ and $(U_-^j,X^j)$ are
  \textit{compatible} if there exists a system that is consistent with
  both, that is, $\Sigma^i\cap\Sigma^j\neq\emptyset$.
\end{definition}

\begin{figure}
  \centering
  \begin{tikzpicture}[scale=0.85]
    \filldraw[color=black, fill=black, fill opacity=0.2] (2,3) circle (1.2) (2,3)  node [opacity=1] {$\Sigma^1$}
    (5,2) circle (1.2) (5,2)  node [opacity=1] {$\Sigma^2$}
    (7.5,4) circle (1.2) (7.5,4)  node [opacity=1] {$\Sigma^3$};
    \filldraw[blue, opacity=0.25]  (5,3.5) circle (2);
    \filldraw[blue, opacity=0.25]  (5,3.5) circle (1.5);
    \filldraw[blue, opacity=0.25]  (5,3.5) circle (1) (5,3.5) node [black, opacity=1] {$\Sigma^{\textnormal{on}}$};
    \draw (0,0.5) rectangle (10,6) (1.5,5.5) node {$\R^{n\times n}\times \R^{n\times m}$};
  \end{tikzpicture}
  \caption{Graphical interpretation of the mode detection
    scheme. Initially, $\Sigma^{\textnormal{on}}$ intersects the sets
    corresponding to three different modes.  As more data are
    collected, $\Sigma^{\textnormal{on}}$ decreases in size (cf.
    darker blue disks). When enough data are available,
    $\Sigma^{\textnormal{on}}$ eventually becomes compatible only with
    mode 2. Note that we do not need $\Sigma^{\textnormal{on}}$ to
    shrink into a singleton (system identification) before reaching
    unique compatibility.}\label{fig:mode_detection}
\end{figure}
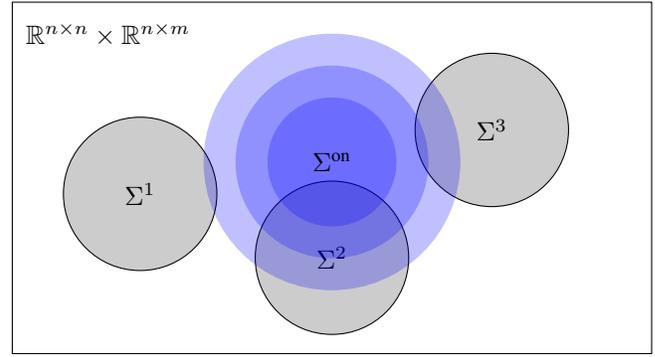


We are interested in determining the active mode of the system from
the online measurements.

\begin{definition}[Informativity for mode detection]
  Given initialization data $\{(U_-^i,X^i)\}_{i\in\calP}$, the online
  measurements $(U_-^{\textnormal{on}},X^{\textnormal{on}})$ are
  \emph{informative for mode detection} if
  $(U_-^{\textnormal{on}},X^{\textnormal{on}})$ and $(U_-^i,X^i)$ are
  compatible for exactly one $i\in\calP$.
\end{definition}

As a prerequisite for the mode detection phase to be viable, we
  require that the initialization data are pairwise
  incompatible. Intuitively, this requires that the true mode dynamics
  are `sufficiently' distinct, in the sense that the uncertainty
  induced by the respective measurements is small enough to
  distinguish the modes.

\begin{assumption}[Initialization step
  --cont'd]\label{ass:ass_on_data-II}
  The data $\{(U_-^i,X^i)\}_{i\in\calP}$ are such that $(U^i_-,X^i)$
  and $(U^j_-,X^j)$ are incompatible for each pair $i\neq j \in\calP$.
\end{assumption}

Since the online measurements are generated by precisely one of the
modes $i\in\calP$, there must be at least one $i\in\calP$ such that
$(U_-^{\textnormal{on}},X^{\textnormal{on}})$ and $(U_-^i,X^i)$ are
compatible. By assuming that the initial data are pairwise
incompatible, we can conclude that once $\Sigma^{\textnormal{on}}$
becomes sufficiently ``small'', the mode $i$ corresponding to the data
$(U_-^i,X^i)$ which are compatible with
$(U_-^{\textnormal{on}},X^{\textnormal{on}})$ must be
unique. Figure~\ref{fig:mode_detection} provides a graphical
illustration. 

\subsection{Mode detection for noiseless data}
First, we consider the system without noise, that is, $w(t)=0$. The
following result characterizes compatibility in this case.

\begin{lemma}[Conditions for noiseless data compatibility]\label{lem:exact dis}
  Noiseless data $(U_-^1,X^1)$ and $(U_-^2,X^2)$ are compatible if and
  only if
  \begin{equation}\label{eq:kernel}
    \ker
    \begin{bmatrix}
      X^1_-&X^2_-\\U^1_-&U^2_-
    \end{bmatrix}
    \subseteq  
    \ker \begin{bmatrix}X^1_+&X^2_+\end{bmatrix}. 
  \end{equation}
\end{lemma}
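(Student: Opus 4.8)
The plan is to reduce compatibility to the solvability of a single linear matrix equation and then invoke the classical criterion for when such an equation admits a solution. First I would recall, as noted in Remark~\ref{rem:noiseless}, that in the noiseless case ($\Pi_{22}=-I$, $\Pi_{12}=0$, $\Pi_{11}=0$) the constraint \eqref{eq:noise model} forces $W_-=0$, so that $\Sigma^i = \{(A,B) : X^i_+ = A X^i_- + B U^i_-\}$ for $i=1,2$. Consequently $(U_-^1,X^1)$ and $(U_-^2,X^2)$ are compatible, i.e.\ $\Sigma^1\cap\Sigma^2\neq\emptyset$, if and only if there exists a single pair $(A,B)$ satisfying both identities. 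Stacking the two data sets side by side, this is equivalent to the existence of a matrix $\begin{bmatrix} A & B\end{bmatrix}$ with
\[
  \begin{bmatrix} A & B\end{bmatrix}
  \begin{bmatrix} X^1_- & X^2_- \\ U^1_- & U^2_- \end{bmatrix}
  = \begin{bmatrix} X^1_+ & X^2_+ \end{bmatrix}.
\]

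Next I would apply the elementary fact that a matrix equation $ZM = C$, with $Z$ the unknown, has a solution if and only if every row of $C$ lies in the row space of $M$; equivalently, viewing $M$ and $C$ as linear maps acting on column vectors, if and only if $\ker M \subseteq \ker C$. Specializing to $M = \begin{bmatrix} X^1_- & X^2_- \\ U^1_- & U^2_- \end{bmatrix}$ and $C = \begin{bmatrix} X^1_+ & X^2_+ \end{bmatrix}$ yields exactly the inclusion \eqref{eq:kernel}, and since the criterion is an equivalence it gives both directions of the lemma at once.

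The only point requiring minor care is the passage between row-space containment and the kernel inclusion. I would handle it by transposing the equation to $M^\top Z^\top = C^\top$ and observing that it is solvable column-by-column precisely when each column of $C^\top$ lies in $\im M^\top = (\ker M)^\perp$, i.e.\ when $Cv = 0$ for every $v\in\ker M$; this also makes the ``if'' direction constructive (choose $Z^\top$ freely on a complement of $\col M^\top$). I do not anticipate any real obstacle here: the substance of the lemma is just this classical solvability criterion for linear matrix equations, specialized to the stacked data matrices, once the noiseless form of $\Sigma^i$ has been recorded.
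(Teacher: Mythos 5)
Your proposal is correct and follows essentially the same route as the paper's proof: reduce compatibility in the noiseless case to solvability of the single stacked equation $\begin{bmatrix} A & B\end{bmatrix}\begin{bmatrix} X^1_- & X^2_- \\ U^1_- & U^2_- \end{bmatrix} = \begin{bmatrix} X^1_+ & X^2_+\end{bmatrix}$ and invoke the standard kernel-inclusion criterion for existence of a solution. The extra detail you give on why that criterion holds (via transposition and row-space containment) is a harmless elaboration of the step the paper states without proof.
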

\begin{proof}
  Since $W_-^1=W_-^2=0$, the data are compatible if and only if there
  exists $A$ and $B$ such that: $X^1_+ = AX^1_-+BU_-^1$ and
  $X^2_+ = AX^2_-+BU_-^2$. Equivalently, we have
  \[
    \begin{bmatrix}X^1_+&X^2_+\end{bmatrix} = \begin{bmatrix} A&
      B \end{bmatrix} \begin{bmatrix}
      X^1_-&X^2_-\\U^1_-&U^2_- \end{bmatrix}.
  \]
  The existence of such $\begin{bmatrix} A&B \end{bmatrix}$ is
  equivalent to \eqref{eq:kernel}.
\end{proof}

This result provides a test to check whether the online measurements
are compatible with the initialization data. At each step, the idea is
to evaluate whether the online data are compatible with precisely one
mode of the system. This iterative procedure raises the question of
how to select the input to the unknown system appropriately at each
step.  In other words, we are interested in \textit{generating} online
data such that, after a bounded number of steps,
$\Sigma(U_-^{\textnormal{on}},X^{\textnormal{on}})$ becomes small
enough for the data to be informative for mode detection. Formally,
the problem is to find a time horizon $T^{\textnormal{on}}$ and inputs
$u^{\textnormal{on}}(0),\ldots,
u^{\textnormal{on}}(T^{\textnormal{on}}-1)$ such that the
corresponding online data
$(U_-^{\textnormal{on}},X^{\textnormal{on}})$ are informative for mode
detection.

To obtain such inputs, we adapt the experiment design method
of~\cite{HJVW:21}. When each of the modes $(\hat{A}_i,\hat{B}_i)$ of the system
are controllable, \cite[Theorem~1]{HJVW:21} gives a construction for
inputs $u^{\textnormal{on}}(0),\ldots, u^{\textnormal{on}}(n+m-1)$
such that the corresponding set
$\Sigma(U_-^{\textnormal{on}},X^{\textnormal{on}})$ is a
singleton. Assume
\[
  \rank \begin{bmatrix}
    X^{\textnormal{on}}_-\\U^{\textnormal{on}}_- \end{bmatrix}\neq
  n+m.
\]
Now, either $x(t)\not\in\im X^{\textnormal{on}}_-$, in which case,
take $u(t)$ equal to $0$. In the other case, \cite[Theorem~1]{HJVW:21}
shows that there exist $\eta,\xi$ such that $\eta\neq 0$ and
\[
  \begin{bmatrix} \xi\\\eta \end{bmatrix}\in\ker\begin{bmatrix}
    X^{\textnormal{on}}_-\\U^{\textnormal{on}}_- \end{bmatrix}^\top .
\] 
Now, by taking any $u(t)\in\R^m$ such that
$\xi^\top x(t)+\eta^\top u(t)\neq 0$, it can be shown that
\[
  \rank \begin{bmatrix}
    X^{\textnormal{on}}_-\\U^{\textnormal{on}}_- \end{bmatrix}<
  \rank \begin{bmatrix} X^{\textnormal{on}}_- & x(t)
    \\U^{\textnormal{on}}_- & u(t) \end{bmatrix}.
\]
In order to {\color{black}contain} the destabilizing effect of this excitation,
we choose $u(t)$ such that $\norm{u(t)}\leq c\norm{x(t)}$ for some $c>0$.

Clearly, after $n+m$ repetitions, this procedure results in a data
matrix which has full row rank. This implies that
$\Sigma(U_-^{\textnormal{on}},X^{\textnormal{on}})$ is a singleton. As
such, under Assumption~\ref{ass:ass_on_data-II}, the measurements up
to $T^{\textnormal{on}}=n+m$ are guaranteed to be informative for mode
detection. This provides a worst-case bound as, in general, mode
  detection is achieved with far fewer measurements than the number
  required to uniquely determine the system on the basis of online
  data alone.

\begin{algorithm}[htb!]
  \caption{Mode detection for noiseless
    data}\label{alg:mode_detection}
  \algorithmicrequire
  $\calP_{\textrm{match}},\{U^i_-,X^i\}_{i\in\calP},U^{\textnormal{on}}_-,X^{\textnormal{on}},c$
  \\
  \algorithmicensure $\calP_{\textrm{match}},U^{\textnormal{on}}_-,X^{\textnormal{on}}
$
  \begin{algorithmic}[1]
    	\If{$U_-^{\textnormal{on}}\neq []$ and $x(t)\in\im X^{\textnormal{on}}_-\setminus \{0\}$ }\label{step:mode_detection_start} 
	   		\State Pick $\begin{bmatrix}\xi\\\eta\end{bmatrix}\in\ker\begin{bmatrix} X^{\textnormal{on}}_-\\U^{\textnormal{on}}_-
    	\end{bmatrix}^\top$ with $\eta\neq 0$\label{step:mode_detection_critical}
    		\State Let $u(t)\in\R^m$ be such that \label{step:mode_detection_bound}
    		\[\norm{u(t)}\leq c\norm{x(t)} \textrm{ and } \xi^\top x(t)+\eta^\top u(t)\neq 0\] \LineComment{Choose the next input}
    	\Else
    		\State $u(t)\gets 0$
    	\EndIf\label{step:mode_detection_end}
 		\State Get the next state $x(t+1)$
 		
  		\State $U_-^{\textnormal{on}}\gets \begin{bmatrix}U_-^{\textnormal{on}}&u(t) \end{bmatrix}$                         
		\State $X^{\textnormal{on}}\gets \begin{bmatrix} X^{\textnormal{on}}& x(t+1)
\end{bmatrix}$ \Comment{Append the online data}
  \If{$x(t+1)=0$} \label{step:x0 start}
  \Comment{System stable for any feedback}
  \State $\calP_{\textnormal{match}} = \{1\}$     \label{step:x0 end}
  \Else
  \For{$i\in\calP_{\textrm{match}}$}  
    \If{the inclusion \eqref{eq:kernel} is violated}  \LineComment{Data are incompatible with mode $i$}
    \State $\calP_{\textrm{match}}=\calP_{\textrm{match}}\backslash\{i\}$ \Comment{Eliminate mode $i$}
    \EndIf
    \EndFor 
    \EndIf
  \end{algorithmic}
\end{algorithm}
Algorithm~\ref{alg:mode_detection} formalizes the mode detection
procedure for noiseless data.
Note that if, at any point of the algorithm, the state satisfies
$x(t)=0$ (step \ref{step:x0 start}), then choosing $u(t)=0$ stabilizes any linear
system, and therefore specifically all modes of the system. As such,
we can apply any of the feedback gains $K_i$. We enforce this by
setting $\calP_{\textnormal{match}}$ equal to any of the
modes. Otherwise, the algorithm checks whether \eqref{eq:kernel} is
violated for each of the modes remaining in
$\calP_{\textnormal{match}}$, in which case the corresponding mode is
discarded.  If $|\calP_{\textrm{match}}|\neq 1$, the algorithm should
be repeated and another online measurement collected.  If
$|\calP_{\textrm{match}}|=1$, the active mode is identified.

\begin{corollary}[Algorithm~\ref{alg:mode_detection} terminates in a
  finite number of repetitions]\label{cor:at-most-n+m}
  Suppose that for each mode $i\in\calP$ the matrix pair
  $(\hat{A}_i,\hat{B}_i)$ is controllable and that
  Assumptions~\ref{ass:ass_on_data-I} and~\ref{ass:ass_on_data-II}
  hold.  Let $\calP_{\textnormal{match}}=\calP$ and
  $X^{\textnormal{on}} = [x(0)]$.  Execute Algorithm~1 iteratively,
  updating $\calP_{\textrm{match}}$ and
  $(U^{\textnormal{on}}_-,X^{\textnormal{on}})$ at every step.  Then
  $|\calP_{\textrm{match}}|=1$ after at most $n+m$
  iterations. Moreover, the online data
  $(U^{\textnormal{on}}_-,X^{\textnormal{on}})$ are either informative
  for mode detection or such that $x(T^{\textnormal{on}})=0$.
\end{corollary}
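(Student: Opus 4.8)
The plan is to monitor the online consistency set $\Sigma^{\textnormal{on}}$ as online data are appended, show that the stacked data matrix $\begin{bmatrix}X^{\textnormal{on}}_-\\U^{\textnormal{on}}_-\end{bmatrix}$ gains at least one unit of rank at each iteration, and then invoke Assumption~\ref{ass:ass_on_data-II} to remove every mode except the active one from $\calP_{\textnormal{match}}$. I begin with two bookkeeping facts. By Lemma~\ref{lem:exact dis}, the test in Algorithm~\ref{alg:mode_detection} removes mode $i$ from $\calP_{\textnormal{match}}$ exactly when $\Sigma^i\cap\Sigma^{\textnormal{on}}=\emptyset$; and appending a column to $(U^{\textnormal{on}}_-,X^{\textnormal{on}})$ only intersects $\Sigma^{\textnormal{on}}$ with a further constraint set, so $\Sigma^{\textnormal{on}}$ can only shrink and a removed mode never re-enters. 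Hence, after every iteration, $\calP_{\textnormal{match}}=\{i\in\calP:\Sigma^i\cap\Sigma^{\textnormal{on}}\neq\emptyset\}$ for the current data. Moreover, the online data come from a single active mode $j$, so $(\hat A_j,\hat B_j)\in\Sigma^{\textnormal{on}}\cap\Sigma^j$; thus $j\in\calP_{\textnormal{match}}$ throughout and $|\calP_{\textnormal{match}}|\geq1$ always.

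I would then split into two cases. If the run stops at an iteration where the appended state is $x(t+1)=0$, step~\ref{step:x0 start} sets $\calP_{\textnormal{match}}=\{1\}$ and the procedure ends with $x(T^{\textnormal{on}})=0$, which is the second alternative in the ``moreover'' clause. Otherwise no zero state is ever appended, and since in the noiseless setting $x(t)=0$ forces $u(t)=0$ and hence $x(t+1)=0$, this means $x(t)\neq0$ for every $t$ encountered; I carry out the rank argument in this case. It shows $|\calP_{\textnormal{match}}|=1$ within $n+m$ iterations, which also bounds the first case since the zero-state termination must then occur within that many steps.

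So assume $x(t)\neq0$ for all $t$. I would show that $r_k:=\rank\begin{bmatrix}X^{\textnormal{on}}_-\\U^{\textnormal{on}}_-\end{bmatrix}$ after $k$ iterations satisfies $r_k\geq\min(k,n+m)$. When $x(t)\notin\im X^{\textnormal{on}}_-$ (in particular at the first iteration, where that column block is empty), the algorithm applies $u(t)=0$, and $\begin{bmatrix}x(t)\\0\end{bmatrix}$ cannot lie in the column span of $\begin{bmatrix}X^{\textnormal{on}}_-\\U^{\textnormal{on}}_-\end{bmatrix}$---otherwise $x(t)\in\im X^{\textnormal{on}}_-$---so appending it strictly increases the rank. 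When instead $x(t)\in\im X^{\textnormal{on}}_-\setminus\{0\}$ and $r_k<n+m$, \cite[Theorem~1]{HJVW:21} furnishes $\begin{bmatrix}\xi\\\eta\end{bmatrix}$ in the left kernel of $\begin{bmatrix}X^{\textnormal{on}}_-\\U^{\textnormal{on}}_-\end{bmatrix}$ with $\eta\neq0$; step~\ref{step:mode_detection_bound} then picks $u(t)$ with $\norm{u(t)}\leq c\norm{x(t)}$ (feasible for any $c>0$ since $x(t)\neq0$) and $\xi^\top x(t)+\eta^\top u(t)\neq0$, so $\begin{bmatrix}x(t)\\u(t)\end{bmatrix}$ is not orthogonal to that left-kernel vector and the rank again strictly increases. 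As the rank cannot exceed $n+m$, this gives $r_k\geq\min(k,n+m)$, so $\begin{bmatrix}X^{\textnormal{on}}_-\\U^{\textnormal{on}}_-\end{bmatrix}$ has full row rank $n+m$ after at most $n+m$ iterations.

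Finally I would close the argument. With $\begin{bmatrix}X^{\textnormal{on}}_-\\U^{\textnormal{on}}_-\end{bmatrix}$ of full row rank, the affine set $\Sigma^{\textnormal{on}}=\{(A,B):\begin{bmatrix}A&B\end{bmatrix}\begin{bmatrix}X^{\textnormal{on}}_-\\U^{\textnormal{on}}_-\end{bmatrix}=X^{\textnormal{on}}_+\}$ contains at most one element, and since it contains $(\hat A_j,\hat B_j)$ it equals $\{(\hat A_j,\hat B_j)\}$. If some $i\neq j$ were still in $\calP_{\textnormal{match}}$, then $(\hat A_j,\hat B_j)\in\Sigma^{\textnormal{on}}\cap\Sigma^i\subseteq\Sigma^i\cap\Sigma^j$ (using $(\hat A_j,\hat B_j)\in\Sigma^j$), contradicting Assumption~\ref{ass:ass_on_data-II}. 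Hence $\calP_{\textnormal{match}}=\{j\}$, so $|\calP_{\textnormal{match}}|=1$ after at most $n+m$ iterations and, by definition, the online data are informative for mode detection. The step I expect to be the main obstacle is the rank increase when $x(t)\in\im X^{\textnormal{on}}_-$: one must guarantee that the left kernel of $\begin{bmatrix}X^{\textnormal{on}}_-\\U^{\textnormal{on}}_-\end{bmatrix}$ contains a direction with nonzero input component, which is exactly what controllability of the modes provides via \cite[Theorem~1]{HJVW:21}, and is the reason that hypothesis appears in the statement.
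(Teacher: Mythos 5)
Your proof is correct and follows essentially the same route the paper takes in the discussion preceding the corollary: the rank of $\left[\begin{smallmatrix}X^{\textnormal{on}}_-\\U^{\textnormal{on}}_-\end{smallmatrix}\right]$ increases by one per iteration (using $u(t)=0$ when $x(t)\notin\im X^{\textnormal{on}}_-$ and the left-kernel vector with $\eta\neq 0$ from \cite[Theorem~1]{HJVW:21} otherwise, which is where controllability enters), so after at most $n+m$ steps $\Sigma^{\textnormal{on}}$ is a singleton and Assumption~\ref{ass:ass_on_data-II} forces compatibility with exactly one mode. Your explicit bookkeeping of $\calP_{\textnormal{match}}$ and the separate treatment of the $x(t+1)=0$ termination are slightly more careful than the paper's informal exposition, but the argument is the same.
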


\subsection{Mode detection for noisy data}
As in the noiseless case, testing whether
$\Sigma^i\cap\Sigma^{\textnormal{on}}\neq \emptyset$ amounts to
checking non-emptiness of the intersection of two convex sets. This
needs to be done online, during the collection of measurements, and
hence requires to be resolved in time with the evolution of the
system. In the noiseless case, the fact that the convex sets under
consideration were affine made solving the problem online feasible by
employing Lemma~\ref{lem:exact dis}. However, this is no longer the
case in the presence of noise. Therefore, to be able to test for
(in)compatibility in an online fashion, here we develop a number of
conservative, but more computationally efficient, methods.  Our
exposition first shows how to over-approximate the sets of compatible
systems by spheres. This allows us to provide simple tests for
incompatibility after a single measurement.  After this, we also
propose methods to deal with sequential measurements.

\paragraph*{Outer approximation of set of consistent systems on the
  basis of measurements} For simplicity of exposition, we first
  deal with a single true linear system and a single set of
  measurements $(U_-,X)$.  Moreover, to further ease the notation, we
  assume in the remainder of the paper that the noise models are given
  in the form of a bound on the energy of the noise, that is,
  $W_-W_-^\top \preceq Q$, with $Q=Q^\top\succeq 0$, cf. \eqref{eq:noise
    model restricted}. The results presented below can be adapted to
  the more general case in a straightforward manner.

Assuming that $\begin{bmatrix} X_-\\U_-\end{bmatrix}$ has full row rank, we
define the \textit{center} $Z$ of the set $\Sigma= \Sigma(U_-,X)$ of
consistent systems, given as in \eqref{eq:Sigma as QMI}, by
\begin{equation}\label{eq:centre}
  Z := X_+\begin{bmatrix} X_- \\
    U_- \end{bmatrix}^\dagger,
\end{equation}
where $M^\dagger$ denotes the Moore-Penrose inverse of a matrix
$M$. Let $\lambda_{\min}(M)$ and $\lambda_{\max}(M)$ denote the
smallest and largest eigenvalue of $M$ respectively, and define the
\textit{radius} $r$ of $\Sigma$ as
\begin{equation}\label{eq:radius}
  r:=
  \sqrt{\frac{\lambda_{\max}(Q)}{\lambda_{\min}\left(\begin{bmatrix}
          X_-\\U_-\end{bmatrix}\begin{bmatrix}
          X_-\\U_-\end{bmatrix}^\top\right)}}.
\end{equation} 
The following result provides an outer approximation of the
set~$\Sigma$.

\begin{lemma}[Outer approximation of set of consistent
  systems]\label{lem:bounded}
  Suppose that $\Pi$ is given by~\eqref{eq:noise model restricted} and
  $\begin{bmatrix} X_-\\U_-\end{bmatrix}$ has full row rank. Then
  $\Sigma\subseteq B^r(Z) = \left\lbrace (A,B) \mid
    \norm{\begin{bmatrix} A& B\end{bmatrix}-Z} \leq r\right\rbrace$,
  where $Z$ is the center~\eqref{eq:centre} and $r$ is the
  radius~\eqref{eq:radius} of~$\Sigma$.
\end{lemma}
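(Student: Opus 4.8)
The plan is to start from the quadratic matrix inequality (QMI) characterization of $\Sigma$ in \eqref{eq:Sigma as QMI}, specialize $N$ to the noise model \eqref{eq:noise model restricted}, and rewrite the resulting inequality so that the "center" $Z$ appears explicitly via a completion of squares. Concretely, with $\Pi = \bdiag(Q,-I_T)$, the matrix $N$ from \eqref{def:N} becomes
\begin{equation*}
  N = \begin{bmatrix} Q - X_+X_+^\top & X_+X_-^\top & X_+U_-^\top \\ X_-X_+^\top & -X_-X_-^\top & -X_-U_-^\top \\ U_-X_+^\top & -U_-X_-^\top & -U_-U_-^\top \end{bmatrix},
\end{equation*}
so membership $(A,B)\in\Sigma$ is equivalent to
\begin{equation*}
  Q - \bigl(X_+ - \begin{bmatrix} A & B\end{bmatrix}D_-\bigr)\bigl(X_+ - \begin{bmatrix} A & B\end{bmatrix}D_-\bigr)^\top \succeq 0,
\end{equation*}
where I abbreviate $D_- := \begin{bmatrix} X_- \\ U_- \end{bmatrix}$. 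This is just the statement $W_-W_-^\top \preceq Q$ with $W_- = X_+ - \begin{bmatrix} A & B\end{bmatrix}D_-$, which is immediate from the definition of $\Sigma$, so this step is really bookkeeping rather than the crux.

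Next I would bring in the full-row-rank hypothesis on $D_-$, which guarantees $D_-D_-^\top \succ 0$ and makes the Moore–Penrose inverse $D_-^\dagger = D_-^\top(D_-D_-^\top)^{-1}$ a genuine right inverse, so $Z = X_+D_-^\dagger$ satisfies $ZD_- D_-^\top = X_+ D_-^\top$. Writing $E := \begin{bmatrix} A & B\end{bmatrix} - Z$, I would expand $X_+ - \begin{bmatrix} A & B\end{bmatrix}D_- = (X_+ - ZD_-) - E D_-$ and use the orthogonality relation $(X_+ - ZD_-)D_-^\top = 0$ to obtain the Pythagorean identity
\begin{equation*}
  \bigl(X_+ - \begin{bmatrix} A & B\end{bmatrix}D_-\bigr)\bigl(X_+ - \begin{bmatrix} A & B\end{bmatrix}D_-\bigr)^\top = (X_+ - ZD_-)(X_+ - ZD_-)^\top + E D_- D_-^\top E^\top.
\end{equation*}
Hence $(A,B)\in\Sigma$ forces $ED_-D_-^\top E^\top \preceq Q - (X_+ - ZD_-)(X_+ - ZD_-)^\top \preceq Q$, where the last inequality uses that the subtracted term is PSD.

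Finally, from $ED_-D_-^\top E^\top \preceq Q$ I extract the norm bound. Taking the trace (or applying $\lambda_{\max}$) and using $D_-D_-^\top \succeq \lambda_{\min}(D_-D_-^\top) I$ gives $\lambda_{\min}(D_-D_-^\top)\,\trace(EE^\top) \leq \trace(ED_-D_-^\top E^\top) \leq \trace(Q) $; a cleaner route giving exactly the stated radius is to note $\lambda_{\min}(D_-D_-^\top)\,\norm{E}^2 = \lambda_{\min}(D_-D_-^\top)\,\lambda_{\max}(EE^\top) \leq \lambda_{\max}(ED_-D_-^\top E^\top) \leq \lambda_{\max}(Q)$, so $\norm{E} \leq \sqrt{\lambda_{\max}(Q)/\lambda_{\min}(D_-D_-^\top)} = r$, which is precisely \eqref{eq:radius}. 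This shows $(A,B)\in B^r(Z)$, proving $\Sigma \subseteq B^r(Z)$. The only mildly delicate point is the eigenvalue manipulation in this last step — one must be careful that $\lambda_{\max}(ED_-D_-^\top E^\top) \geq \lambda_{\min}(D_-D_-^\top)\lambda_{\max}(EE^\top)$, which follows because for a unit eigenvector $v$ of $EE^\top$ with eigenvalue $\lambda_{\max}(EE^\top)$ one has $v^\top E D_- D_-^\top E^\top v \geq \lambda_{\min}(D_-D_-^\top)\,\|E^\top v\|^2 = \lambda_{\min}(D_-D_-^\top)\lambda_{\max}(EE^\top)$; everything else is a routine completion of squares. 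I expect this eigenvalue estimate, together with correctly invoking the full-rank hypothesis to justify the orthogonality relation, to be the main (though modest) obstacle.
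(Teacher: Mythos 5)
Your proposal is correct and follows essentially the same route as the paper's proof: both pass from the QMI form $W_-W_-^\top \preceq Q$ to the inequality $ED_-D_-^\top E^\top \preceq Q$ (your completion-of-squares with the orthogonality relation $(X_+ - ZD_-)D_-^\top = 0$ is exactly the step the paper compresses into ``this implies that''), and then both extract the radius via $\lambda_{\min}(D_-D_-^\top)I \preceq D_-D_-^\top$ and $Q \preceq \lambda_{\max}(Q)I$. The only cosmetic difference is that the paper chains Loewner inequalities to get $EE^\top \preceq r^2 I$ directly, whereas you argue via an eigenvector of $EE^\top$; both are valid.
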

\begin{proof}
  By definition $(A,B)\in\Sigma$ if and only if
  \[
    \left(X_+ - \begin{bmatrix} A& B\end{bmatrix}\begin{bmatrix}
        X_-\\U_-\end{bmatrix}\right) \left(X_+ - \begin{bmatrix} A&
        B\end{bmatrix}\begin{bmatrix}
        X_-\\U_-\end{bmatrix}\right)^\top \preceq Q.
  \] 
  This implies that 
  \[
    \left(\begin{bmatrix} A& B\end{bmatrix}-Z \right) \begin{bmatrix}
      X_-\\U_-\end{bmatrix}\begin{bmatrix} X_-\\U_-\end{bmatrix}^\top
    \left(\begin{bmatrix} A& B\end{bmatrix}-Z \right)^\top \preceq Q.
  \] 
  Since $\begin{bmatrix} X_-\\U_-\end{bmatrix}$ has full row rank, the
  matrix
  $\begin{bmatrix} X_-\\U_-\end{bmatrix}\begin{bmatrix}
    X_-\\U_-\end{bmatrix}^\top$ is positive definite. Moreover, for
  any matrix $M\in\mathbb{R}^{n\times n}$ such that $M\succeq 0$, we have
  $\lambda_{\min}(M)I_n \preceq M \preceq \lambda_{\max}(M)I_n$. This implies
  that
  \[
    \left(\begin{bmatrix} A& B\end{bmatrix}-Z \right)
    \left(\begin{bmatrix} A& B\end{bmatrix}-Z \right)^\top \preceq r^2 I_n,
  \]
  and hence we $\norm{\begin{bmatrix} A& B\end{bmatrix}-Z}^2 \leq r^2$
  for any $(A,B)\in\Sigma $.
\end{proof}

Conversely, if $\begin{bmatrix} X_-\\U_-\end{bmatrix}$ does not have
full row rank, then the corresponding set $\Sigma$ is unbounded, and
hence there does not exist any $r$ and $Z$ such
that~$\Sigma\subseteq B^r(Z)$.

\paragraph*{Conservative tests for incompatibility using outer approximations}

Here we describe how to leverage the outer approximation on the set of
systems consistent with some given measurements to determine data
incompatibility.  Given the initialization data, we define the
distance between $\Sigma^i$ and $\Sigma^j$ for $i,j\in\calP$ by
\[
  d_{ij} := \min_{\substack{(A_i,B_i)\in\Sigma^i \\
      (A_j,B_j)\in\Sigma^j}} \norm{\begin{bmatrix} A_i-A_j &
      B_i-B_j \end{bmatrix} } .
\]
Since the sets $\Sigma^i$ are closed by definition, the measurements
$(U^i_-,X^i)$ and $(U^j_-,X^j)$ are incompatible if and only if
$d_{ij}>0$. This distance can be determined from the initialization
data and computed in the initialization step.  We can give an
alternative, more computationally efficient test under the additional
assumption that the matrices
$\begin{bmatrix} X^i_-\\U^i_-\end{bmatrix}$ have full row rank. In
this case, we can use Lemma~\ref{lem:bounded} to efficiently bound the
distance between $\Sigma^i$ and $\Sigma^j$ from below. We denote by
$Z_i$ and $r_i$, respectively, the center and radius of $\Sigma^i$,
for each $i \in \calP$.  Then, we have
\begin{equation}
  d_{ij} \geq \norm{Z_i-Z_j}^2 -r_i-r_j. 
\end{equation} 
This can be used to formulate an efficient method of verifying
Assumption~\ref{ass:ass_on_data-II} as follows.

\begin{corollary}[Ensuring Assumption~\ref{ass:ass_on_data-II} holds]
	\label{cor:ens ass 3}
  Suppose that $\Sigma^i\subseteq B^{r_i}(Z_i)$ for all $i\in
  \calP$. If $ \norm{Z_i-Z_j}^2 >r_i+r_j $, for all $i\neq j$, then
  Assumption~\ref{ass:ass_on_data-II} holds.
\end{corollary}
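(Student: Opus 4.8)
The plan is to reduce the statement to the disjointness condition built into Assumption~\ref{ass:ass_on_data-II} and then obtain that disjointness from the ball inclusions via the triangle inequality. Recall that, as noted just before the statement, the data $(U^i_-,X^i)$ and $(U^j_-,X^j)$ are incompatible precisely when $\Sigma^i\cap\Sigma^j=\emptyset$, and since each $\Sigma^i$ is closed (being the solution set of the quadratic matrix inequality in~\eqref{eq:Sigma as QMI}), this is equivalent to $d_{ij}>0$. Hence it suffices to show that the hypothesis $\norm{Z_i-Z_j}>r_i+r_j$ for all $i\neq j$ forces $d_{ij}>0$ for all $i\neq j$.

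First I would fix $i\neq j\in\calP$ and take arbitrary $(A_i,B_i)\in\Sigma^i$ and $(A_j,B_j)\in\Sigma^j$. Using the inclusions $\Sigma^i\subseteq B^{r_i}(Z_i)$ and $\Sigma^j\subseteq B^{r_j}(Z_j)$ together with the reverse triangle inequality for the $2$-norm,
\begin{align*}
  \norm{\begin{bmatrix} A_i-A_j & B_i-B_j\end{bmatrix}}
  &\geq \norm{Z_i-Z_j} - \norm{\begin{bmatrix} A_i & B_i\end{bmatrix}-Z_i} - \norm{\begin{bmatrix} A_j & B_j\end{bmatrix}-Z_j}\\
  &\geq \norm{Z_i-Z_j} - r_i - r_j .
\end{align*}
Taking the infimum over $(A_i,B_i)\in\Sigma^i$ and $(A_j,B_j)\in\Sigma^j$ yields $d_{ij}\geq \norm{Z_i-Z_j}-r_i-r_j$, which is exactly the lower bound displayed before the corollary. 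The hypothesis then gives $d_{ij}>0$, so $\Sigma^i\cap\Sigma^j=\emptyset$; since $i\neq j$ were arbitrary, Assumption~\ref{ass:ass_on_data-II} holds.

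There is no genuinely hard step: the result is an immediate consequence of the outer approximation in Lemma~\ref{lem:bounded} and the triangle inequality. The only point worth flagging is a typographical discrepancy: since $r_i$ is a norm (not a squared norm) and $B^{r_i}(Z_i)$ has radius $r_i$, the correct separation condition is $\norm{Z_i-Z_j}>r_i+r_j$ without the square appearing in the statement (and likewise in the displayed bound $d_{ij}\geq\norm{Z_i-Z_j}-r_i-r_j$). I would state and prove the corollary with the unsquared norm, as above.
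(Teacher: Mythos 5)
Your proof is correct and follows exactly the route the paper intends: the corollary is an immediate consequence of the displayed lower bound on $d_{ij}$, which itself comes from the ball inclusions of Lemma~\ref{lem:bounded} and the (reverse) triangle inequality, combined with the fact that incompatibility of $(U_-^i,X^i)$ and $(U_-^j,X^j)$ is equivalent to $d_{ij}>0$ since the sets $\Sigma^i$ are closed. You are also right that the exponent on $\norm{Z_i-Z_j}$ is a typo in both the displayed bound and the corollary statement; the correct separation condition is $\norm{Z_i-Z_j}>r_i+r_j$ without the square.
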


The same idea can be used for the case of online measurements
$(U_-^{\textnormal{on}},X^{\textnormal{on}})$. Recall that these are
collected sequentially, giving rise to
$\Sigma^{\textnormal{on}} = \bigcap_{t=0}^{T^{\textnormal{on}}-1}
\Sigma^{\textnormal{on}}_t$.  Since the ultimate goal of checking for
incompatibility is to determine the active mode of the switched
system, after collecting each measurement, we face the dichotomy of
already using the information or wait for the additional one provided
by subsequent measurements. As a first step towards resolving this
dichotomy, we develop a computationally efficient test to check for
compatibility with a single set~$\Sigma^{\textnormal{on}}_t$
determined by a \emph{single} measurement.

In the single measurement case, the noise model~\eqref{eq:noise model
  restricted} takes the form $w(t)w(t)^\top\preceq q^2I_n$, or
equivalently, $\norm{w(t)}\leq q$. Then,
\begin{equation}\label{eq:def Sigonj}
  \Sigma^{\textnormal{on}}_t = \{
  (A,B) \mid \norm{ \begin{bmatrix} A &
      B \end{bmatrix}\left(\!\begin{smallmatrix} x(t)\\
        u(t) \end{smallmatrix}\!\right) -x(t+1)}\leq q \}.
\end{equation}
The following result provides a sufficient condition for incompatibility
with~$\Sigma^{\textnormal{on}}_t$.

\begin{lemma}[Scalar test for incompatibility with single online
  measurement] \label{lem:dist for compatibility}
  Let $\Sigma\subseteq B^r(Z)$ for some $r$ and $Z$. For 
  $\Sigma^{\textnormal{on}}_t$  as in~\eqref{eq:def Sigonj}, if
  \begin{equation}\label{eq:condition spheres}
    \norm{Z \left(\!\begin{smallmatrix} x(t)\\
          u(t) \end{smallmatrix}\!\right) - x(t+1) } >
    q+r\norm{\!\left(\!\begin{smallmatrix} x(t)\\
          u(t) \end{smallmatrix}\!\right)\!},
  \end{equation}
  then $\Sigma\cap\Sigma^{\textnormal{on}}_t =\emptyset$.
\end{lemma}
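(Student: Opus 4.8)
The plan is to argue by contraposition: assume the intersection $\Sigma\cap\Sigma^{\textnormal{on}}_t$ is nonempty, pick a point $(A,B)$ in it, and derive that inequality~\eqref{eq:condition spheres} must fail, i.e. that $\norm{Z \left(\!\begin{smallmatrix} x(t)\\ u(t) \end{smallmatrix}\!\right) - x(t+1)} \leq q+r\norm{\left(\!\begin{smallmatrix} x(t)\\ u(t) \end{smallmatrix}\!\right)}$. The whole argument is just the triangle inequality applied to a well-chosen splitting. Write $v := \left(\!\begin{smallmatrix} x(t)\\ u(t) \end{smallmatrix}\!\right)$ for brevity. Since $(A,B)\in\Sigma^{\textnormal{on}}_t$, definition~\eqref{eq:def Sigonj} gives $\norm{\begin{bmatrix} A & B\end{bmatrix} v - x(t+1)}\leq q$. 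Since $(A,B)\in\Sigma\subseteq B^r(Z)$, we have $\norm{\begin{bmatrix} A & B\end{bmatrix} - Z}\leq r$, and therefore $\norm{(\begin{bmatrix} A & B\end{bmatrix} - Z)v}\leq \norm{\begin{bmatrix} A & B\end{bmatrix} - Z}\,\norm{v}\leq r\norm{v}$, using submultiplicativity of the operator 2-norm.

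Now insert $\begin{bmatrix} A & B\end{bmatrix} v$ as an intermediate term:
\[
  \norm{Zv - x(t+1)} \leq \norm{Zv - \begin{bmatrix} A & B\end{bmatrix} v} + \norm{\begin{bmatrix} A & B\end{bmatrix} v - x(t+1)} \leq r\norm{v} + q .
\]
This is exactly the negation of~\eqref{eq:condition spheres}, so if~\eqref{eq:condition spheres} holds, no such $(A,B)$ can exist, i.e. $\Sigma\cap\Sigma^{\textnormal{on}}_t=\emptyset$.

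There is essentially no obstacle here — the only things to be careful about are (i) that the norm on matrices is the induced 2-norm, so that $\norm{Mv}\leq\norm{M}\norm{v}$ is legitimate (this is the paper's standing convention), and (ii) that $B^r(Z)$ is defined via $\norm{\begin{bmatrix} A & B\end{bmatrix}-Z}\leq r$ with the \emph{same} matrix norm, which is precisely how Lemma~\ref{lem:bounded} set it up. One may optionally remark that the test is only conservative: it can fail to detect incompatibility even when $\Sigma\cap\Sigma^{\textnormal{on}}_t=\emptyset$, because $B^r(Z)$ is merely an outer approximation of $\Sigma$ and $\Sigma^{\textnormal{on}}_t$ is an affine slab rather than a ball. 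No such remark is needed for the proof itself.
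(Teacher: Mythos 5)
Your proof is correct and is essentially the paper's own argument in contrapositive form: both rest on the same triangle-inequality splitting through $\begin{bmatrix} A & B\end{bmatrix}\left(\!\begin{smallmatrix} x(t)\\ u(t) \end{smallmatrix}\!\right)$ and the same bound $\norm{(\begin{bmatrix} A & B\end{bmatrix}-Z)v}\leq r\norm{v}$ coming from $\Sigma\subseteq B^r(Z)$. No gap.
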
 
\begin{proof}
  Suppose that \eqref{eq:condition spheres} holds and let
  $\bar{Z}\in\Sigma$. We show that
  $\bar{Z}\not\in\Sigma^{\textnormal{on}}_t$.  From the triangle
  inequality, we have
  \begin{align*}
    &\hspace{-1em} \norm{\bar{Z} \left(\!\begin{smallmatrix} x(t)\\
          u(t) \end{smallmatrix}\!\right) - x(t+1) } \\&\geq \norm{Z
    \left(\!\begin{smallmatrix} x(t)\\ u(t) \end{smallmatrix}\!\right)
    - x(t+1) } - \norm{(\bar{Z}-Z)\left(\!\begin{smallmatrix} x(t)\\
        u(t) \end{smallmatrix}\!\right)\!}.
  \end{align*}
  Since, by assumption $\bar{Z}\in B^r(Z)$, we can write
  \[
    \norm{(\bar{Z}-Z)\left(\!\begin{smallmatrix} x(t)\\
          u(t) \end{smallmatrix}\!\right)\!}\leq
    r\norm{\!\left(\!\begin{smallmatrix} x(t)\\
          u(t) \end{smallmatrix}\!\right)\!}.
  \]
  Combining this with~\eqref{eq:condition spheres}, we obtain
  \[
    \norm{\bar{Z} \left(\!\begin{smallmatrix} x(t)\\
          u(t) \end{smallmatrix}\!\right) - x(t+1) } > q,
  \]
  and hence  $\bar{Z}\not\in\Sigma^{\textnormal{on}}_t$ as claimed.
\end{proof}

This result allows to check incompatibility by means of a scalar
condition, instead of finding the intersections of two quadratic sets
in~$\mathbb{R}^{n\times(n+m)}$.  As a further benefit, we can apply
the previous reasoning to efficiently determining when a
\textit{chosen} input is guaranteed to resolve the incompatibility
problem.

\begin{corollary}[Choosing inputs for incompatibility]\label{cor:input
    design}
  Let $\Sigma^{\textnormal{on}}_t$ be as in \eqref{eq:def Sigonj} and
  suppose that $\Sigma^i\subseteq B^{r_i}(Z_i)$ for all
  $i\in\calP$. Let $i\neq j\in\calP$ and $x(t)\in\mathbb{R}^n$. If
  $u(t)$ is such that
  \begin{equation}\label{eq:input design}
    \norm{ (Z_i-Z_j)\left(\!\begin{smallmatrix} x(t)\\
          u(t) \end{smallmatrix}\!\right)}
    >(r_i+r_j)\norm{\left(\!\begin{smallmatrix} x(t)\\
          u(t) \end{smallmatrix}\!\right)}+2q ,
  \end{equation}
  then at most one of
  $\Sigma^i\cap\Sigma^{\textnormal{on}}_t\neq \emptyset$ or
  $\Sigma^j\cap\Sigma^{\textnormal{on}}_t\neq \emptyset$ holds.
\end{corollary}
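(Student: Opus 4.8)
The plan is to reduce this corollary to a double application of Lemma~\ref{lem:dist for compatibility}, exploiting the condition~\eqref{eq:input design} to make the hypothesis~\eqref{eq:condition spheres} hold for at least one of the two modes. The key observation is that if both $\Sigma^i\cap\Sigma^{\textnormal{on}}_t\neq\emptyset$ and $\Sigma^j\cap\Sigma^{\textnormal{on}}_t\neq\emptyset$, then neither of the two incompatibility conditions from Lemma~\ref{lem:dist for compatibility} can hold; that is, writing $v:=\left(\begin{smallmatrix}x(t)\\u(t)\end{smallmatrix}\right)$, we must have both $\norm{Z_iv-x(t+1)}\leq q+r_i\norm{v}$ and $\norm{Z_jv-x(t+1)}\leq q+r_j\norm{v}$. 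I would argue by contradiction from these two inequalities.

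First I would assume, for contradiction, that both intersections are nonempty, and record the two inequalities above as consequences of Lemma~\ref{lem:dist for compatibility} (in contrapositive form): if $\Sigma^i\cap\Sigma^{\textnormal{on}}_t\neq\emptyset$ then~\eqref{eq:condition spheres} fails with $(Z,r)=(Z_i,r_i)$, and similarly for $j$. Then I would add these two bounds and apply the triangle inequality to $(Z_i-Z_j)v = (Z_iv - x(t+1)) - (Z_jv - x(t+1))$, obtaining
\[
  \norm{(Z_i-Z_j)v} \leq \norm{Z_iv-x(t+1)} + \norm{Z_jv-x(t+1)} \leq 2q + (r_i+r_j)\norm{v}.
\]
This directly contradicts the hypothesis~\eqref{eq:input design}, which asserts the strict reverse inequality $\norm{(Z_i-Z_j)v} > (r_i+r_j)\norm{v} + 2q$. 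Hence at most one of the two intersections can be nonempty, which is the claim.

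There is essentially no hard part here: the result is a clean corollary and the proof is a contrapositive plus one triangle inequality. The only point requiring a sentence of care is the logical bookkeeping — making explicit that Lemma~\ref{lem:dist for compatibility} in contrapositive form says "$\Sigma\cap\Sigma^{\textnormal{on}}_t\neq\emptyset$ implies~\eqref{eq:condition spheres} is violated" — and noting that the lemma applies because $\Sigma^i\subseteq B^{r_i}(Z_i)$ and $\Sigma^j\subseteq B^{r_j}(Z_j)$ are exactly the hypotheses assumed. One should also be mindful that the triangle-inequality step splits the single noise allowance $q$ into a contribution of $q$ from each of the two mode comparisons, which is precisely why the factor $2q$ (rather than $q$) appears on the right-hand side of~\eqref{eq:input design}; I would flag this so the reader sees where the constant comes from.
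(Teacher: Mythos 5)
Your proof is correct and follows exactly the route the paper intends: the corollary is stated as a direct consequence of Lemma~\ref{lem:dist for compatibility} (the paper gives no separate proof, remarking only that ``we can apply the previous reasoning''), and your argument --- contrapositive of that lemma for both modes, then one triangle inequality on $(Z_i-Z_j)v=(Z_iv-x(t+1))-(Z_jv-x(t+1))$ to contradict~\eqref{eq:input design} --- is precisely that reasoning, including the correct accounting for the $2q$ term.
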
 

Now, the condition in Corollary~\ref{cor:input design} provides 
conditions under which an input can be picked such that a mode can be 
discarded. If, for each pair of modes and at each state each 
$x\in\mathbb{R}^n$, we were able to do so, we can discard a mode at 
each step of online operation, yielding a procedure which takes at most $p$ steps. 

Interestingly, this can be met wherever the effect of the input on the systems 
in $\Sigma_i$ and $\Sigma_j$ is `different enough'. Formally, if
\begin{equation}\label{eq:sufficient input design}
  \norm{(Z_i-Z_j)\begin{bmatrix} 0\\ I_m \end{bmatrix}}
  >
  r_i+r_j,
\end{equation}
then, for any $x(t)$, there exists a large enough $u(t)$ for
which~\eqref{eq:input design} holds.  Equipped with these results, one can generalize
Algorithm~\ref{alg:mode_detection} to noisy data by first outer approximating the
sets of systems compatible with the initialization data
$\{\Sigma_i\}_{i \in \calP}$ using Lemma~\ref{lem:bounded} and then
(assuming that \eqref{eq:sufficient input design} holds for a sequence of pairs)
designing inputs guaranteed to eliminate at least one mode from
consideration at each step. 
Instead of formalizing this procedure, we will 
take full advantage of the information provided by multiple measurements at once.

\paragraph*{Considering incremental measurements}
Treating online measurements separately may prove restrictive. {\color{black}In
fact, the true system is not just contained in each
$\Sigma^{\textnormal{on}}_t$, but in their intersection
$\Sigma^{\textnormal{on}} = \bigcap_{t=0}^{T^{\textnormal{on}}-1}
\Sigma^{\textnormal{on}}_t$.} To address this, we develop here results
dealing with intersections of such sets. In order to circumvent the
computational complexity associated with considering increasing
numbers of intersections of quadratic sets, we overestimate them with
a single set, which leads to conservative, more computationally
efficient tests.  We employ a set representation of the
form~\eqref{eq:Sigma as QMI}, that is, we have for each
$t=0,\ldots,T^{\textnormal{on}}-1$,
\[
  \Sigma^{\textnormal{on}}_t =
  \Bigg \{ (A,B) \mid
  \begin{bmatrix}
    I_n \\
    A^\top
    \\
    B^\top
  \end{bmatrix}^\top
  N^{\textnormal{on}}_t
  \begin{bmatrix}
    I_n \\
    A^\top\\B^\top
  \end{bmatrix}
  \succeq 0 \Bigg \},
\]
for some $N^{\textnormal{on}}_t$ given in the form of \eqref{def:N},
with a corresponding noise model~\eqref{eq:noise model
  restricted}. Given $\alpha_0,\ldots,\alpha_{T^{\textnormal{on}}-1}\geq 0$, we
denote the vector $\alpha := ( \alpha_0\cdots\alpha_{T^{\textnormal{on}}-1})^\top$
and define
$ N^{{\textnormal{on}}}(\alpha) := \sum_{t=0}^{T^{\textnormal{on}}-1}\alpha_t
N^{\textnormal{on}}_t $. Similarly, we let $N^i$ be the matrices
corresponding to the initialization sets $\Sigma^i$, and define
$ N^{i,{\textnormal{on}}}(\alpha) := N^i+
N^{\textnormal{on}}(\alpha)$, for each $i\in\calP$.  Consider the sets
\begin{align*}
  \Sigma^{\textnormal{on}}(\alpha)
  &:= \Bigg \{ (A,B)
    \mid \begin{bmatrix} I_n \\ A^\top
      \\ B^\top \end{bmatrix}^\top
  N^{{\textnormal{on}}}(\alpha)
  \begin{bmatrix}
    I_n
    \\
    A^\top
    \\
    B^\top
  \end{bmatrix}
  \succeq 0 \Bigg \},
  \\
  \Sigma^{i,\textnormal{on}}(\alpha)
  &:=
    \Bigg \{ (A,B)
    \mid \begin{bmatrix}
      I_n \\
      A^\top \\
      B^\top
    \end{bmatrix}^\top
  N^{i,{\textnormal{on}}}(\alpha)
  \begin{bmatrix}
    I_n
    \\
    A^\top\\B^\top
  \end{bmatrix}
  \succeq 0 \Bigg \}.
\end{align*}
Note that, if a number of quadratic matrix inequalities are satisfied,
then so is any nonnegative combination of them. Therefore, for any
$\alpha_0,\ldots,\alpha_{T^{\textnormal{on}}-1}\geq 0$,
\[
  \Sigma^{\textnormal{on}} \subseteq \Sigma^{\textnormal{on}}(\alpha)
  \textrm{ and } \Sigma^i\cap\Sigma^{\textnormal{on}} \subseteq
  \Sigma^{i,\textnormal{on}}(\alpha).
\]
This provides over-approximations of $ \Sigma^{\textnormal{on}}$ and
$ \Sigma^i\cap\Sigma^{\textnormal{on}}$ in terms of such nonnegative
combinations, allowing us to state the following result.

\begin{lemma}[Parameterized condition for incompatibility with online
  measurements]\label{lem:suff cond for incompatibility}
  Given online measurements
  $(U_-^{\textnormal{on}},X^{\textnormal{on}})$ and $i \in \calP$, if
  there exist $\alpha_0,\ldots,\alpha_{T^{\textnormal{on}}-1}\geq 0$
  such that either
  \begin{enumerate}
  \item\label{item:para 1} $\Sigma^i\cap\Sigma^{ \textnormal{on}}(\alpha)= \emptyset$
    or
  \item\label{item:para 2} $\Sigma^{i, \textnormal{on}}(\alpha)= \emptyset$,
  \end{enumerate}
  then $\Sigma^i\cap\Sigma^{\textnormal{on}}=\emptyset$.
\end{lemma}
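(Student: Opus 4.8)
The plan is to prove both implications by contraposition, showing that if $\Sigma^i\cap\Sigma^{\textnormal{on}}\neq\emptyset$, then neither condition~\eqref{item:para 1} nor condition~\eqref{item:para 2} can hold. The whole argument rests on the monotonicity of the over-approximations that was established in the discussion preceding the lemma, namely that $\Sigma^{\textnormal{on}}\subseteq\Sigma^{\textnormal{on}}(\alpha)$ and $\Sigma^i\cap\Sigma^{\textnormal{on}}\subseteq\Sigma^{i,\textnormal{on}}(\alpha)$ for every choice of nonnegative coefficients $\alpha_0,\ldots,\alpha_{T^{\textnormal{on}}-1}$.

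First I would fix an arbitrary $(A,B)\in\Sigma^i\cap\Sigma^{\textnormal{on}}$, which exists by the contraposition hypothesis. For condition~\eqref{item:para 1}: since $(A,B)\in\Sigma^i$ and $(A,B)\in\Sigma^{\textnormal{on}}\subseteq\Sigma^{\textnormal{on}}(\alpha)$, we get $(A,B)\in\Sigma^i\cap\Sigma^{\textnormal{on}}(\alpha)$, so this intersection is nonempty for every $\alpha$, contradicting~\eqref{item:para 1}. For condition~\eqref{item:para 2}: since $(A,B)\in\Sigma^i\cap\Sigma^{\textnormal{on}}\subseteq\Sigma^{i,\textnormal{on}}(\alpha)$, the set $\Sigma^{i,\textnormal{on}}(\alpha)$ is nonempty for every $\alpha$, contradicting~\eqref{item:para 2}. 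Hence if either condition holds for some $\alpha\geq 0$, we must have $\Sigma^i\cap\Sigma^{\textnormal{on}}=\emptyset$.

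The only genuine content to verify is the inclusion $\Sigma^i\cap\Sigma^{\textnormal{on}}\subseteq\Sigma^{i,\textnormal{on}}(\alpha)$ (and the analogous one for $\Sigma^{\textnormal{on}}$), which I would justify by recalling that membership in each $\Sigma^{\textnormal{on}}_t$ means a quadratic matrix inequality with data matrix $N^{\textnormal{on}}_t$ holds; if $(A,B)$ satisfies all of them, then for any $\alpha_t\geq 0$ the weighted sum of these inequalities gives the quadratic matrix inequality with data matrix $N^{\textnormal{on}}(\alpha)=\sum_t\alpha_t N^{\textnormal{on}}_t$, i.e., $(A,B)\in\Sigma^{\textnormal{on}}(\alpha)$; adding the inequality for $N^i$ (valid because $(A,B)\in\Sigma^i$) yields $(A,B)\in\Sigma^{i,\textnormal{on}}(\alpha)$ since $N^{i,\textnormal{on}}(\alpha)=N^i+N^{\textnormal{on}}(\alpha)$. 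This is precisely the observation already recorded in the excerpt, so in the write-up I would simply cite it rather than re-derive it.

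I do not anticipate any real obstacle here: the lemma is essentially a restatement of the over-approximation inclusions combined with elementary set logic (a subset of the empty set is empty). The main thing to be careful about is bookkeeping — keeping straight which inclusion applies to which condition, and noting that the implication is one-directional (these are sufficient, not necessary, conditions for incompatibility), which is consistent with the conservative nature of the tests emphasized throughout the section. The proof will therefore be short, a few lines for each of the two cases.
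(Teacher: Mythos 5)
Your proof is correct and follows exactly the route the paper intends: the lemma is stated as an immediate consequence of the inclusions $\Sigma^{\textnormal{on}}\subseteq\Sigma^{\textnormal{on}}(\alpha)$ and $\Sigma^i\cap\Sigma^{\textnormal{on}}\subseteq\Sigma^{i,\textnormal{on}}(\alpha)$ derived just before it (the paper gives no separate proof), and your contraposition argument simply makes the elementary set logic explicit. No gaps.
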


This result allows us to test intersections of multiple quadratic sets
for emptiness in terms of a parametrized intersection of either two
such sets using Lemma~\ref{lem:suff cond for
  incompatibility}\ref{item:para 1}, or even one,
using Lemma~\ref{lem:suff cond for incompatibility}\ref{item:para 2}. 
The following result provides a way to efficiently test for
the latter.

\begin{lemma}[Spectral test for incompatibility with online
  measurements]
  Given initialization data $\{(U_-^i,X^i)\}_{i \in \calP}$ and online
  measurements $(U_-^{\textnormal{on}},X^{\textnormal{on}})$, suppose
  $\begin{bmatrix} X_-^i\\U_-^i\end{bmatrix}$ has full row rank for
  $i\in\calP$.  For any $i\in \calP$ and
  $\alpha_0,\ldots,\alpha_{T^{\textnormal{on}}-1}\geq 0$,
  $\Sigma^{i, \textnormal{on}}(\alpha)\neq \emptyset$ if and only if
  $N^{i,\textnormal{on}}(\alpha)$ has precisely $n+m$ negative
  eigenvalues. Equivalently, given
  \[
    N^{i,{\textnormal{on}}}(\alpha) =
    \begin{bmatrix}
      \bar{N}_{11} & \bar{N}_{12} \\ \bar{N}_{21} & \bar{N}_{22}
    \end{bmatrix},
  \]
  then $\Sigma^{i, \textnormal{on}}(\alpha)\neq \emptyset$ if and only
  if $\bar{N}_{11} -\bar{N}_{12}\bar{N}_{22}^{-1}\bar{N}_{21}\succeq 0$.
\end{lemma}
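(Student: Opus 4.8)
The plan is to reduce the question to the inertia of the single matrix $N^{i,\textnormal{on}}(\alpha)$, exploiting the fact that the full-row-rank hypothesis forces its lower-right block to be negative definite. Write $N := N^{i,\textnormal{on}}(\alpha)$ and partition it as in the statement, $N = \left[\begin{smallmatrix}\bar N_{11} & \bar N_{12}\\ \bar N_{21} & \bar N_{22}\end{smallmatrix}\right]$ with $\bar N_{11}\in\R^{n\times n}$ and $\bar N_{22}\in\R^{(n+m)\times(n+m)}$. The first step is to identify $\bar N_{22}$. Expanding the defining formula~\eqref{def:N} for $N^i$ and for each $N^{\textnormal{on}}_t$ under the restricted noise model~\eqref{eq:noise model restricted} (so that $\Pi^i = \left[\begin{smallmatrix} Q_i & 0\\ 0 & -I\end{smallmatrix}\right]$ and, for a single online sample, $\left[\begin{smallmatrix} q^2 I_n & 0\\ 0 & -1\end{smallmatrix}\right]$), the lower-right blocks come out to be $-\left[\begin{smallmatrix} X_-^i\\ U_-^i\end{smallmatrix}\right]\left[\begin{smallmatrix} X_-^i\\ U_-^i\end{smallmatrix}\right]^\top$ and $-\left[\begin{smallmatrix} x(t)\\ u(t)\end{smallmatrix}\right]\left[\begin{smallmatrix} x(t)\\ u(t)\end{smallmatrix}\right]^\top$, respectively. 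Hence $\bar N_{22} = -\left[\begin{smallmatrix} X_-^i\\ U_-^i\end{smallmatrix}\right]\left[\begin{smallmatrix} X_-^i\\ U_-^i\end{smallmatrix}\right]^\top - \sum_{t}\alpha_t\left[\begin{smallmatrix} x(t)\\ u(t)\end{smallmatrix}\right]\left[\begin{smallmatrix} x(t)\\ u(t)\end{smallmatrix}\right]^\top$; the first term is negative definite because $\left[\begin{smallmatrix} X_-^i\\ U_-^i\end{smallmatrix}\right]$ has full row rank, and subtracting a nonnegative combination of positive semidefinite rank-one matrices keeps $\bar N_{22}\prec 0$.

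Second, I would complete the square in the quadratic matrix inequality defining $\Sigma^{i,\textnormal{on}}(\alpha)$. With $Z := \begin{bmatrix} A & B\end{bmatrix}$, that inequality reads $\bar N_{11} + Z\bar N_{21} + \bar N_{12}Z^\top + Z\bar N_{22}Z^\top \succeq 0$, and setting $W := Z + \bar N_{12}\bar N_{22}^{-1}$ rewrites it as $\bigl(\bar N_{11} - \bar N_{12}\bar N_{22}^{-1}\bar N_{21}\bigr) + W\bar N_{22}W^\top \succeq 0$, with $W$ ranging over all of $\R^{n\times(n+m)}$. If the Schur complement $\bar N_{11} - \bar N_{12}\bar N_{22}^{-1}\bar N_{21}$ is positive semidefinite, then $W=0$, i.e.\ $\begin{bmatrix} A & B\end{bmatrix} = -\bar N_{12}\bar N_{22}^{-1}$, is a feasible point, so $\Sigma^{i,\textnormal{on}}(\alpha)\neq\emptyset$. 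Conversely, if some $W$ is feasible, then since $\bar N_{22}\prec 0$ gives $W\bar N_{22}W^\top\preceq 0$, one obtains $\bar N_{11} - \bar N_{12}\bar N_{22}^{-1}\bar N_{21}\succeq 0$. This proves the ``equivalently'' claim; it is also exactly the nonemptiness criterion for solution sets of quadratic matrix inequalities in~\cite{HJVW-MKC-JE-HLT:22}, which we could alternatively invoke directly.

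Finally, I would translate the Schur-complement condition into the eigenvalue count via Haynsworth inertia additivity: since $\bar N_{22}$ is invertible, $\mathrm{In}(N) = \mathrm{In}(\bar N_{22}) + \mathrm{In}\bigl(\bar N_{11} - \bar N_{12}\bar N_{22}^{-1}\bar N_{21}\bigr)$, and $\bar N_{22}\prec 0$ contributes exactly $n+m$ negative eigenvalues and no nonnegative ones. Therefore $N$ has precisely $n+m$ negative eigenvalues if and only if the $n\times n$ Schur complement has no negative eigenvalue, i.e.\ if and only if $\bar N_{11} - \bar N_{12}\bar N_{22}^{-1}\bar N_{21}\succeq 0$; combined with the previous step, all three conditions are equivalent. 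I expect the only real work to be the bookkeeping in the first step --- expanding~\eqref{def:N} for the initialization and online matrices under the simplified noise model and verifying that the $\alpha$-weighted sum preserves negative definiteness of the lower-right block, which is precisely where the full-row-rank hypothesis enters. Everything after that is standard Schur-complement and inertia manipulation.
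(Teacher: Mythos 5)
Your proposal is correct and follows essentially the same route as the paper: the paper's proof is a one-line appeal to the quadratic-matrix-inequality nonemptiness criterion of \cite[Thm.~3.2]{HJVW-MKC-JE-HLT:22} after noting that $\bar{N}_{22}\prec 0$, and your argument simply opens up that black box, carrying out the same Schur-complement and inertia computation that underlies the cited result. Your explicit verification that $\bar{N}_{22} = -\left[\begin{smallmatrix} X_-^i\\ U_-^i\end{smallmatrix}\right]\left[\begin{smallmatrix} X_-^i\\ U_-^i\end{smallmatrix}\right]^\top - \sum_{t}\alpha_t\left[\begin{smallmatrix} x(t)\\ u(t)\end{smallmatrix}\right]\left[\begin{smallmatrix} x(t)\\ u(t)\end{smallmatrix}\right]^\top$, and hence that the full-row-rank hypothesis is exactly what makes it negative definite, is a welcome clarification of the paper's terse ``by construction.''
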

\begin{proof}
  Given that $\bar{N}_{22}\prec 0$ by construction, the statement follows
  from~\cite[Thm. 3.2]{HJVW-MKC-JE-HLT:22}.
\end{proof}

One can use either criteria in Lemma~\ref{lem:suff cond for
  incompatibility} to generalize Algorithm~\ref{alg:mode_detection} to
noisy data in a way that integrates the information provided by
multiple measurements at once. {\color{black}For instance, in line with
  Corollary~\ref{cor:ens ass 3}, we could formulate a conservative
  test for condition \ref{item:para 1} of Lemma~\ref{lem:suff cond for
    incompatibility}, using only radii and centers. However, in the
  following, we derive a less conservative test for the same
  condition. }

Let $\alpha = \mathbbm{1} \in \mathbb{R}^{T^{\textnormal{on}}}$,
the vector of all ones. Recall that, during online operation, we
collect single measurements, with each noise sample satisfying
$\norm{w(t)} \leq q$, for some $q>0$. The set
$\Sigma^{\textnormal{on}}_t$ is then described by~\eqref{eq:def
  Sigonj}.  Consider
  \[
    N^{\textnormal{on}}(\mathbbm{1}) = \begin{bmatrix} I_n\!\!
      &X^{\textnormal{on}}_+\\0\!\!&-X^{\textnormal{on}}_-\\0\!\!&-U^{\textnormal{on}}_-
  \end{bmatrix}
  \!\!
  \begin{bmatrix}
    q^2T^{\textnormal{on}}I_n & 0 \\ 0&
    -I_{T^{\textnormal{on}}}
  \end{bmatrix}
  \!\!
  \begin{bmatrix}
    I_n \!\!&X^{\textnormal{on}}_+\\0\!\!&-X^{\textnormal{on}}_-\\0\!\!&-U^{\textnormal{on}}_-
  \end{bmatrix}^\top .
\]
Using the Schur complement, we can conclude that
$(A,B) \in\Sigma^{\textnormal{on}}(\mathbbm{1})$ if and only if
\[
  \begin{bmatrix} q^2 T^{\textnormal{on}} I &
    X^{\textnormal{on}}_+-AX^{\textnormal{on}}_--BU^{\textnormal{on}}_-
    \\
    (X^{\textnormal{on}}_+-AX^{\textnormal{on}}_--BU^{\textnormal{on}}_-)^\top&
    I
  \end{bmatrix}\succeq 0.
\]
A similar LMI can be obtained to check whether
$(A,B) \in\Sigma^i$, leading to the following result.

\begin{corollary}[LMI test for incompatibility with online
  measurements]\label{cor:noisy stuff}
  Given online single measurements obtained sequentially, with each
  noise sample satisfying $\norm{w(t)} \leq q$, for some $q>0$, let
  $i \in \calP$ and assume the initialization data $(U_-^i,X^i)$ has
  noise model~\eqref{eq:noise model restricted} with $Q^i = q^2T^i I_n$.
  If there are no matrices $A$ and $B$ that satisfy simultaneously
  \begin{subequations}\label{noisy_LMI}
    \begin{align}
        \begin{bmatrix}
        q^2T^iI_n&X^i_+-AX^i_--BU^i_-
        \\
        (X^i_+-AX^i_--BU^i_-)^\top& I_{T^i}
      \end{bmatrix}\! & \succeq \! 0,
      \\
      \begin{bmatrix}
        q^2T^{\textnormal{on}}I_n&X^{\textnormal{on}}_+-AX^{\textnormal{on}}_--B
        U^{\textnormal{on}}_-
        \\
        (X^{\textnormal{on}}_+-AX^{\textnormal{on}}_--BU^{\textnormal{on}}_-)^\top&
        I_{T^{\textnormal{on}}}
      \end{bmatrix} \! & \succeq \! 0, 
    \end{align}
  \end{subequations}
  then the data are incompatible, that is,
  $\Sigma^i\cap\Sigma^{\textnormal{on}}=\emptyset$.
\end{corollary}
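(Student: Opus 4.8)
\emph{Proof strategy.} The plan is to read the two linear matrix inequalities in~\eqref{noisy_LMI} as \emph{exact} membership conditions for, respectively, the initialization set $\Sigma^i$ and the over-approximation $\Sigma^{\textnormal{on}}(\mathbbm{1})$ of $\Sigma^{\textnormal{on}}$, and then to conclude by invoking Lemma~\ref{lem:suff cond for incompatibility}\ref{item:para 1} with the nonnegative vector $\alpha=\mathbbm{1}$.

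The second inequality in~\eqref{noisy_LMI} is handled exactly as in the discussion preceding the statement. Expanding $N^{\textnormal{on}}(\mathbbm{1})=\sum_{t=0}^{T^{\textnormal{on}}-1}N^{\textnormal{on}}_t$, where each $N^{\textnormal{on}}_t$ has the one-measurement form induced by $\norm{w(t)}\leq q$, produces the expression for $N^{\textnormal{on}}(\mathbbm{1})$ displayed above the statement, whose middle block is $\diag(q^2T^{\textnormal{on}}I_n,\,-I_{T^{\textnormal{on}}})$. Substituting $(A,B)$ into the quadratic form in~\eqref{eq:Sigma as QMI} collapses the outer factors to $\begin{bmatrix}I_n & X^{\textnormal{on}}_+-AX^{\textnormal{on}}_--BU^{\textnormal{on}}_-\end{bmatrix}$, so that $(A,B)\in\Sigma^{\textnormal{on}}(\mathbbm{1})$ iff $q^2T^{\textnormal{on}}I_n-(X^{\textnormal{on}}_+-AX^{\textnormal{on}}_--BU^{\textnormal{on}}_-)(X^{\textnormal{on}}_+-AX^{\textnormal{on}}_--BU^{\textnormal{on}}_-)^\top\succeq0$; since $I_{T^{\textnormal{on}}}\succ0$, a Schur complement rewrites this as the second inequality of~\eqref{noisy_LMI}.

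Next I would carry out the parallel computation for the initialization set. Under the hypothesis that the noise model of $(U^i_-,X^i)$ is~\eqref{eq:noise model restricted} with $Q^i=q^2T^iI_n$, unwinding the definition of $\Sigma^i$ shows that $(A,B)\in\Sigma^i$ iff the residual $W^i_-:=X^i_+-AX^i_--BU^i_-$ satisfies $W^i_-(W^i_-)^\top\preceq q^2T^iI_n$, and a Schur complement with respect to $I_{T^i}\succ0$ turns this into the first inequality of~\eqref{noisy_LMI}. Combining the two equivalences, a pair $(A,B)$ satisfies both inequalities of~\eqref{noisy_LMI} simultaneously precisely when $(A,B)\in\Sigma^i\cap\Sigma^{\textnormal{on}}(\mathbbm{1})$; hence the hypothesis that no such pair exists is exactly $\Sigma^i\cap\Sigma^{\textnormal{on}}(\mathbbm{1})=\emptyset$. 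Lemma~\ref{lem:suff cond for incompatibility}\ref{item:para 1} with $\alpha=\mathbbm{1}$ then yields $\Sigma^i\cap\Sigma^{\textnormal{on}}=\emptyset$, which is the assertion.

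I do not expect a genuine obstacle: the argument is essentially bookkeeping built on the two Schur-complement identities above together with the inclusion $\Sigma^{\textnormal{on}}\subseteq\Sigma^{\textnormal{on}}(\mathbbm{1})$ that already underlies Lemma~\ref{lem:suff cond for incompatibility}. The only points that deserve a moment of care are that the $(2,2)$ blocks $I_{T^i}$ and $I_{T^{\textnormal{on}}}$ are positive definite, so that the Schur complements are equivalences in both directions, and the internal consistency of the noise-energy accounting, namely that collecting $T^{\textnormal{on}}$ online samples each with $\norm{w(t)}\leq q$ produces the energy bound $W^{\textnormal{on}}_-(W^{\textnormal{on}}_-)^\top\preceq q^2T^{\textnormal{on}}I_n$, which mirrors the standing hypothesis $Q^i=q^2T^iI_n$ imposed on the initialization data.
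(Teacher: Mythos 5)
Your proposal is correct and follows essentially the same route the paper takes: the text immediately preceding the corollary derives the second LMI as the exact (Schur-complement) characterization of membership in $\Sigma^{\textnormal{on}}(\mathbbm{1})$, notes that ``a similar LMI can be obtained'' for $\Sigma^i$ under $Q^i=q^2T^iI$, and concludes via Lemma~\ref{lem:suff cond for incompatibility}\ref{item:para 1} with $\alpha=\mathbbm{1}$. Your write-up simply makes explicit the bookkeeping the paper leaves implicit (the exactness of both Schur complements and the energy accounting $\sum_t q^2 I_n = q^2 T^{\textnormal{on}} I_n$), and all of it checks out.
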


\begin{algorithm}[htb!]
  \caption{Mode detection for noisy data}\label{alg:mode_detection_2}
  \algorithmicrequire
  $\calP_{\textrm{match}},\{U^i_-,X^i\}_{i\in\calP},U^{\textnormal{on}}_-,
  X^{\textnormal{on}},c,q$
  \\
  \algorithmicensure
  $\calP_{\textrm{match}},U^{\textnormal{on}}_-,X^{\textnormal{on}}
$
  \begin{algorithmic}[1]           
    \State Pick a random $u(t)\in\R^m$ such that $\norm{u(t)}\leq c\norm{x(t)}$
    \State Get the next state $x(t+1)$
    \State
    $U_-^{\textnormal{on}}\gets \begin{bmatrix}U_-^{\textnormal{on}}&u(t)\end{bmatrix}$                          
    \State
    $X^{\textnormal{on}}\gets \begin{bmatrix}X^{\textnormal{on}}&
      x(t+1)\end{bmatrix}$ \Comment{Append the data} 
    \For{$i\in\calP_{\textrm{match}}$}  
    \If{the pair of LMIs \eqref{noisy_LMI} is infeasible }                         \LineComment{Data are incompatible with mode $i$}
    \State $\calP_{\textrm{match}}=\calP_{\textrm{match}}\backslash\{i\}$ \Comment{Eliminate mode $i$}
    \EndIf
    \EndFor 
  \end{algorithmic}
\end{algorithm}

Algorithm~\ref{alg:mode_detection_2} presents a mode detection
procedure that employs these LMI conditions (instead of the kernel
condition \eqref{eq:kernel} in Algorithm~\ref{alg:mode_detection}) to
check for incompatibility. Also, the inputs at each step of the
algorithm are chosen randomly.

\section{Stabilization phase and detecting
  switches}\label{sec:stabilization}

Having solved the mode detection problem, here we turn our attention
to the stabilization phase.  At the start of this phase, we know the
mode the system is operating in. Therefore, we simply apply the
controller computed in the initialization step, see
Section~\ref{sec:initialization},
\begin{align*}
  u=K_ix ,  
\end{align*}
where $i \in \calP$ is the currently active mode.  The controller
remains in the stabilization phase until a switch in the mode of the
system is detected.  As mentioned in the problem formulation,
in Section~\ref{sec:notion}, we consider two scenarios:
\begin{itemize}
\item the switching signal is partially known: the controller is aware
  of when the system switches modes, yet it does not know which mode
  the system has switched into;
\item the switching signal is completely unknown: this means that the
  controller needs to implement a mechanism to detect whether a switch
  has occurred.
\end{itemize}
In either case, once a switch has been detected, to determine the
current active mode, the controller will switch back to the mode
detection phase described in Section~\ref{sec:MD}.

Here we describe a procedure to detect switches in the system mode in
the second scenario above. This consists of monitoring the evolution
of the Lyapunov function, $V_i(x)=\sqrt{x^\top P_i x}$, computed in the
initialization step, cf. Section~\ref{sec:initialization}, associated
with the currently active mode $i \in \calP$.  From the triangle
inequality and~\eqref{eqn:informativity for uniform stabilization},
when the system is operating in mode $i$ with the feedback control
$u=K_ix$, and under the noise bound $\norm{w(t)}\leq q$, we have
\begin{align}
  V_i&(x(t+1))=\norm{P_i^{\frac{1}{2}}((\hat{A}_i+\hat{B}_iK_i)x(t)+w(t))}
       \nonumber
  \\
     &\leq{\lambda}V_i(x(t))+\lambda_{\max}(P_i^{\frac{1}{2}})q .
       \label{what_to_obey}
\end{align}
The contrapositive of this statement reads as follows: if the
  inequality does \textit{not} hold, then the system cannot be
  operating in mode~$i$.  We employ it as our switch-detection
  mechanism; that is, the mode detection phase is triggered whenever
\begin{equation}\label{eq:switch detection}
  V_i(x(t+1))> {\lambda}V_i(x(t))+\lambda_{\max}(P_i^{\frac{1}{2}})q.
\end{equation}
Note that the system~\eqref{def:sys} may switch between modes while
the inequality \eqref{what_to_obey} is still preserved (i.e., the
  controller designed for the previously detected mode is still
  stabilizing for the current one). In this case, the controller
applies the controller corresponding to the previously detected mode,
and the mode of the system and the mode of the controller become
desynchronized. As our ensuing technical analysis shows, this does not
affect the stability properties of the closed-loop system.

\section{Stabilization via online switched controller}\label{sec:osc}
In this section, we describe our online switched controller design and
establish the asymptotic convergence properties of the resulting
closed-loop switched system.

\subsection{Switched controller design}\label{sec:controller}
Our controller combines the mode detection and the stabilization
phases into a single design, formalized in
Algorithm~\ref{alg:controller_1}, for the case of noisy data and an
unknown switching signal. Next, we provide an intuitive description of
the pseudocode language:
\begin{quote}
  \emph{Informal description:} The algorithm assumes an initialization
  step satisfying Assumptions~\ref{ass:ass_on_data-I}
  and~\ref{ass:ass_on_data-II} has been performed. The algorithm
  starts in the mode detection phase, indicated by the variable
  $S_{\textrm{phase}}$. During this phase,
  Algorithm~\ref{alg:mode_detection_2} is executed for each iteration
  until mode detection is successful. The variable $\sigma_d$ is then taken to be
  the active mode of the system. Next, the algorithm switches to the
  stabilization phase (signaled by $S_{\textrm{phase}}=1$).  During
  this phase, the control input $u(t)=K_{\sigma_d}x(t)$ is applied
  until \eqref{eq:switch detection} holds. This marks the detection of
  a mode switch, which leads the controller to go back to the mode
  detection phase (by toggling $S_{\textrm{phase}}$ to $0$). In the
  meantime, $U_-^{\textnormal{on}}$ and $X^{\textnormal{on}}$ are reset to
  record the new online data.
\end{quote}

\begin{algorithm}[htb!]
  \caption{Data-driven online switched feedback controller for noisy
    data with unknown switching signal}\label{alg:controller_1}
  \algorithmicrequire $\calP,\{U^i_-,X^i,K_i,P_i\}_{i\in\calP},\lambda,c,q$
  \begin{algorithmic}[1]
    \State $\calP_{\textrm{match}}\gets\calP$
  \State $S_{\textrm{phase}}$ $\gets 0$ \Comment{Initialize to mode detection phase}
  %
  \State $X^{\textnormal{on}}\gets x(0)$
  \State $U_-^{\textnormal{on}}\gets []$
  \Comment{Initialize online data}
  \While{the system \eqref{def:sys} is running}
  \If{$S_{\textrm{phase}}$ $=0$}                    \Comment{Mode
    detection phase}
  \State\label{step: algorithm} Run Algorithm~\ref{alg:mode_detection_2} to update $U^{\textnormal{on}}_-$, $X^{\textnormal{on}}$, and $\calP_{\textrm{match}}$
  \If{$|\calP_{\textrm{match}}|=1$}
  \State Pick
  $\sigma_d\in\calP_{\textrm{match}}$ \label{step:mode_to_switch}
  \Comment{Set the controller mode}
  \State $S_{\textrm{phase}}$ $\gets 1$                \LineComment{Change controller to stabilization phase}
  \EndIf    
  \Else   \Comment{Stabilization phase}
  \State Apply control $u(t)=K_{\sigma_d}x(t)$ 
  \State Obtain the next state $x(t+1)$
  \If{\eqref{eq:switch detection} holds} \label{step:switch detec}   \Comment{Trigger mode-detection}
 \State $U_-^{\textnormal{on}}\gets u(t)$ \Comment{Reset the online data}
 \State $X^{\textnormal{on}}\gets \begin{bmatrix} x(t)&x(t+1)\end{bmatrix}$                 
 \State $\calP_{\textrm{match}}\gets\calP$	\Comment{Reset $\calP_{\textnormal{match}}$}
 \State $S_{\textrm{phase}}$ $\gets 0$             \LineComment{Change controller to mode detection phase}
 \EndIf
 \EndIf
    \State $t\gets t+1$                     \Comment{Update the time}
    \EndWhile
  \end{algorithmic}
\end{algorithm}

Note that the closed-loop system is stable when the controller is in
its stabilization phase. However, during the mode detection phase, the
effect of the controller on the system might be destabilizing because
of the potential mismatch with the system mode.  Therefore,
determining overall stability properties of the
form~\eqref{ISS-estimate_0} relies critically on the switching
behavior, both of the system and the controller.

\begin{remark}[Data-driven online switched feedback controller for
  noiseless data and known switching signal]
  Algorithm~\ref{alg:controller_1} requires minor modifications in (i)
  the noiseless case or (ii) when the switching signal is known. For
  (i), one replaces Algorithm~\ref{alg:mode_detection_2} in
  Step~\ref{step: algorithm} with Algorithm~\ref{alg:mode_detection}
  and takes $q=0$ when evaluating~\eqref{eq:switch detection} in
  Step~\ref{step:switch detec}.  For (ii), one replaces the check in
  Step~\ref{step:switch detec} by the condition
  $\sigma(t+1)\neq\sigma(t)$. \oprocend
\end{remark}

\begin{remark}[Appending the online data to initialization
  measurements]\label{rem:appending}
  Once the mode detection phase is successful, we know that the
  gathered online measurements were generated by the active mode
  $i\in\calP_{\textnormal{match}}$. This raises the possibility of
  incorporating such online data to the initialization measurements
  corresponding to the mode $i\in\calP_{\textnormal{match}}$.
  Intuitively, this would decrease the size of the set $\Sigma^i$,
  which means that future mode detection phases would require fewer
  online measurements at the cost of having the controller keep
  previous measurements in its memory. \oprocend
\end{remark}

\subsection{{\color{black} Assumptions on dwell- and activation-times
    of switching}}\label{sec:dwell}

In this section, we introduce some assumptions on the
switching. Intuitively, a switched system can be stabilized if
switching is infrequent and the mode-detection phase, which is usually
unstable, is relatively short. Our conditions for the switching signal
are characterized through \emph{average dwell-time} (ADT)
condition~\cite{JPH-AM:99} on the switching and \emph{average
  activation-time} (AAT) condition~\cite{MM-DL:12} on the mode
detection phase, which are summarized as follows.

\begin{assumption}[Properties of the switching signal of the
  controller]\label{ass:detection_after_exercution}
  Let $\bT^m:=\{t_1^m,t_2^m,\cdots\}$ be the ordered set consisting of
  the initial time instants of each mode detection phase. Similarly,
  let $\bT^s:=\{t_1^s,t_2^s,\cdots\}$ correspond to the time instants
  when the stabilization phase starts.  Assume
  \begin{enumerate}
  \item\label{itm:asump 1} the system does not switch while the
    controller is in the mode detection phase;
  \item\label{itm:asump adt} let $\calN(t_a,t_b)$ be the total
    number of mode detection phases over the time interval
    $[t_a,t_b)$, that is, $ \calN(t_a,t_b):= |[t_a,t_b)\cap
    \bT^m|$. There exists $\tau>0$ and $N_0\geq 1$ such that
    \begin{equation}\label{ADT_condition}
      \calN(t_a,t_b) \leq N_0+\frac{t_b-t_a}{\tau} ,\quad\forall t_a,t_b\in
      \N, t_a< t_b;
    \end{equation}
    
  \item\label{itm:asump aat} let $\calM(t_a,t_b)$ be the total
    time spent in mode detection phases over the time interval
    $[t_a,t_b)$, that is,
    $\calM(t_a,t_b):=\sum_{t=t_a}^{t_b-1}{\mathbf 1}(t)$, where
    \[
      \mathbf 1 (t):=\begin{cases} 1& \mbox{ if }t\in [t_i^m,t_i^s)
        \mbox{ for some }i\in\N,
        \\
	0&\mbox{ otherwise. }
      \end{cases}
    \] 
    Then, there exists $\eta\in[0,1]$ and $T_0\geq 0$ such that
    \begin{equation}\label{AAT_condition}
      \calM(t_a,t_b) \leq T_0+\eta(t_b-t_a)\quad\forall
      t_a,t_b\in \N, t_a< t_b. 
    \end{equation}
  \end{enumerate}
\end{assumption}

Recall that by construction, the controller alternates between the
mode detection phase and the stabilization phase. In other words, the
elements in $\bT^m$ and $\bT^s$ are ordered such that
\[
  t_1^m<t_1^s<t_2^m<t_2^s<\ldots
\]

Statement~\ref{itm:asump 1} ensures that the collected online
measurements correspond to a single active mode.
Statement~\ref{itm:asump adt} ensures that, on average, the controller
is switched to the mode detection phase no more than once per $\tau$
time instances.
Similarly, statement~\ref{itm:asump aat} ensures that, on average, the
controller dwells in the mode detection phase for at most a fraction
$\eta$ of the total time. Intuitively,
Assumption~\ref{ass:detection_after_exercution} holds if the controller
switches infrequently and if the mode detection phase is, on average,
relatively short when compared to the stabilization phase.

Note that Assumption~\ref{ass:detection_after_exercution} is
formulated in terms of the switching signal $\sigma_d$ associated to
the controller. In practice, this can be difficult to verify. However,
one can guarantee that these conditions hold by making slightly
stronger assumptions on the system switching signal $\sigma$, the
switch detection, and the mode detection, as follows.

{\color{black}
  \begin{lemma}[Properties of the system switching signal and the
    controller]\label{ass3'} 
    Let
    $\bT:=\{t_1,t_2,\cdots\}=\{t\in\N\backslash\{0\}:\sigma(t)\neq\sigma(t-1)\}$
    denote the ordered set of time instances at which a system switch
    occurs, and let $\bT^m, \bT^s$ be as defined in
    Assumption~\ref{ass:detection_after_exercution}. Assume there
    exist $\tau,\tau_a,\tau_b\in\N$ with $\tau>\tau_a+\tau_b$ such
    that
    \begin{subequations}\label{eq:ass3'}
      \begin{align}
        t_{i+1}-t_i&\geq \tau,\label{ass3'_1}
        \\
        t^m_i-t_i&\leq \tau_a,\label{ass3'_2}
        \\
        t^s_i-t^m_i&\leq \tau_b, \label{ass3'_3}
      \end{align}
    \end{subequations}
    for all $i\in\N\backslash\{0\}$. Then
    Assumption~\ref{ass:detection_after_exercution} holds.
  \end{lemma}
  \begin{proof}
    The proof follows routinely, by noting that condition
    \eqref{ass3'_1} implies \eqref{ADT_condition} with the same
    parameter $\tau$ and $N_0=1$; conditions \eqref{ass3'_2} and
    \eqref{ass3'_3} imply \eqref{AAT_condition} with
    $\eta=\frac{\tau_b}{\tau}$ and $T_0=\tau_b$; and, with
    $\tau>\tau_a+\tau_b$, we have that
    Assumption~\ref{ass:detection_after_exercution}\ref{itm:asump 1}
    holds.
  \end{proof}
  
  Regarding Lemma~\ref{ass3'}, note that the first condition
  \eqref{ass3'_1} means that the switching signal $\sigma$ has a
  minimal dwell time of~$\tau$, and it is a way of formalizing the
  idea that the system switches sufficiently infrequently. The second
  condition \eqref{ass3'_2} means the mode switch is detected no later
  than $\tau_a$ steps after $\sigma$ changes its value. For scenarios
  where the switching instants are known, this condition holds
  immediately.
The last condition \eqref{ass3'_3} means that each mode-detection
phase will last no more than $\tau_b$ time steps. According to
Corollary~\ref{cor:at-most-n+m}, this can be guaranteed with
$\tau_b=n+m$ for noiseless measurements.

}

\subsection{Stability analysis of the closed-loop
  system}\label{sec:stability}

The following result characterizes the stability properties of the
closed-loop system. 

\begin{theorem}[Stability guarantee for the closed-loop
  system]\label{thm:stability_control}
  Consider initialization data $(U_-^i,X^i)$ with, for each
  $i\in\calP$, noise model of the form~\eqref{eq:noise model
    restricted}, $Q^i = q^2T^i I_n$, for some $q>0$, and satisfying
  Assumptions~\ref{ass:ass_on_data-I} and~\ref{ass:ass_on_data-II}.
  Let online measurements be collected sequentially, with
  $\norm{w(t)}\leq q$ at each time $t$.  Assume that the switching
  signal of the controller satisfies
  Assumption~\ref{ass:detection_after_exercution}.  Let
  \begin{equation}\label{def:mu}
    \mu:=\max_{i,j\in\calP}\Vert P_i^{\frac{1}{2}}P_j^{-\frac{1}{2}}\Vert,
  \end{equation}
  and $\lambda_u\geq 1$ be such that for any $i\in\calP$, there exists
  $k_i\geq0$ such that
  \begin{equation}\label{LMI_for_MD}
    \begin{bmatrix}
      \lambda_u^2P_i-k_i c^2 I_n&0&\hat{A}_i^\top\\
      0&k_iI_m&\hat{B}_i^\top\\
      \hat{A}_i&\hat{B}_i&P_i^{-1}
    \end{bmatrix}\succeq 0.
  \end{equation}

  If the following holds
  \begin{equation}\label{ADT_AAT_condition}
    \left(1-\frac{\ln\lambda_u}{\ln\lambda}\right)\eta +
    \left(1-\frac{\ln\mu}{\ln\lambda}\right)\frac{1}{\tau}<1,  
  \end{equation}
  then, for all initial states $x(0)\in\R^n$ and each time $t\in\N$,
  the solution of the closed-loop system~\eqref{def:sys} with the
  data-driven switching feedback controller described by
  Algorithm~\ref{alg:controller_1} satisfies
  \begin{equation}\label{ISS-estimate}
    \norm{x(t)} \leq \frac{p_{\max}}{p_{\min}}a^tb\norm{x(0)} +
    \frac{p_{\max}}{p_{\min}}\frac{ab}{(1-a){\lambda}}q  ,
  \end{equation}
  where
  \begin{subequations}\label{eq:a-b}
    \begin{align}
      a&:={\lambda\left(\frac{\mu}{\lambda}\right)^{\frac{1}{\tau}}
         \left(\frac{\lambda_u}{\lambda}\right)^{\eta}}
         \in(\lambda,1),
         \label{def:a} 
      \\ 
      b&:={\left(\frac{\mu}{\lambda}\right)^{N_0}
         \left(\frac{\lambda_u}{\lambda}\right)^{T_0}} , \label{def:b} 
    \end{align}
  \end{subequations}
  and $p_{\max}=\max_{i\in\calP}\lambda_{\max}(P_i^{\frac{1}{2}})$,
  $ p_{\min}=\min_{i\in\calP}\lambda_{\min}(P_i^{\frac{1}{2}})$.
\end{theorem}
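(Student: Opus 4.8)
The plan is a switched-systems Lyapunov argument built on the per-mode certificates of the initialization step --- written, as in~\eqref{what_to_obey}, in the form $\norm{P_i^{1/2}x}$ --- tracked phase-by-phase along the closed loop and combined with the average dwell-/activation-time bounds. First I would record two one-step estimates. On a step contained in a stabilization phase with detected mode $i$, the controller remains in that phase exactly because~\eqref{eq:switch detection} fails, which --- cf.~\eqref{what_to_obey} --- gives $\norm{P_i^{1/2}x(t+1)}\le\sqrt{\lambda}\,\norm{P_i^{1/2}x(t)}+\lambda_{\max}(P_i^{1/2})q$. On a step contained in a mode-detection phase, the plant sits in a fixed mode $j$ (Assumption~\ref{ass:detection_after_exercution}\ref{itm:asump 1}), the input obeys $\norm{u(t)}\le c\norm{x(t)}$, and a Schur complement of~\eqref{LMI_for_MD} with respect to the $P_i^{-1}$ block, together with $k_j\ge0$, yields $\norm{P_j^{1/2}(\hat{A}_jx+\hat{B}_ju)}^2\le\lambda_u\norm{P_j^{1/2}x}^2$, hence $\norm{P_j^{1/2}x(t+1)}\le\sqrt{\lambda_u}\,\norm{P_j^{1/2}x(t)}+\lambda_{\max}(P_j^{1/2})q$ after using $\norm{w(t)}\le q$. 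The step at which a switch is detected is slightly special --- the controller still applies the previous gain $K_i$ while the plant has already changed mode --- but it too is covered by this second estimate provided $c\ge\max_{i\in\calP}\norm{K_i}$, so I attach it to the mode-detection phase that follows. I also record the cross-mode comparison $\norm{P_i^{1/2}x}\le\sqrt{\mu}\,\norm{P_j^{1/2}x}$, immediate from~\eqref{def:mu}, and $p_{\min}\norm{x}\le\norm{P_i^{1/2}x}\le p_{\max}\norm{x}$.

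Next I would build a single scalar recursion. Let $\rho(t)$ be the true plant mode while the controller is in a mode-detection phase and the (correctly) detected mode while it is in a stabilization phase, and set $W(t):=\norm{P_{\rho(t)}^{1/2}x(t)}$. Three facts make $\rho$ tractable: it is constant on each phase (Assumption~\ref{ass:detection_after_exercution}\ref{itm:asump 1}); at the end of a mode-detection phase the detected mode equals the true mode --- correctness of mode detection, which holds because the initialization data are pairwise incompatible (Assumption~\ref{ass:ass_on_data-II}) --- so $\rho$ does \emph{not} change at a detection-to-stabilization transition; and at a stabilization-to-detection transition $\rho$ may change, but the cross-mode comparison charges this at most once, by a factor $\sqrt{\mu}$. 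Combining with the two one-step estimates gives $W(t+1)\le\gamma(t)W(t)+p_{\max}q$, with $\gamma(t)=\sqrt{\lambda}$ on stabilization steps and $\gamma(t)=\sqrt{\lambda_u}$ on mode-detection steps (including the detection-triggering step), plus an isolated factor $\sqrt{\mu}$ at each stabilization-to-detection transition.

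I would then unroll this recursion on $[0,t)$. The product of all the $\gamma(s)$ and the isolated $\sqrt{\mu}$ factors is at most $\mu^{N(0,t)/2}\lambda_u^{M(0,t)/2}\lambda^{(t-M(0,t))/2}$, since $[0,t)$ contains $N(0,t)$ stabilization-to-detection transitions and $M(0,t)$ mode-detection steps; bounding $\mu^{N(0,t)}\le(\mu/\lambda)^{N(0,t)}$ and applying the average dwell-time bound~\eqref{ADT_condition} to $N(0,t)$ and the average activation-time bound~\eqref{AAT_condition} to $M(0,t)$ --- using $\mu/\lambda\ge1$ and $\lambda_u/\lambda\ge1$ --- turns this product into $b\,a^{t}$ with $a,b$ exactly as in~\eqref{def:a}--\eqref{def:b}, and a direct computation recovers $a\in(\sqrt{\lambda},1)$ from hypothesis~\eqref{ADT_AAT_condition}. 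For the forced part, the coefficient of the step-$s$ noise term $p_{\max}q$ is the product of all $\gamma(r)$ and $\sqrt{\mu}$ factors strictly after $s$, which is at most $b\,a^{\,t-1-s}$ by the same bound applied to the subinterval $(s,t)$ (conditions~\eqref{ADT_condition}--\eqref{AAT_condition} hold on every subinterval with the same constants); summing the resulting geometric series and tracking constants yields the term with coefficient $\frac{ab}{(1-a)\sqrt{\lambda}}$. Dividing by $p_{\min}$ at time $t$ and using $W(0)\le p_{\max}\norm{x(0)}$ then gives~\eqref{ISS-estimate}.

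The main obstacle is the bookkeeping at the phase transitions --- above all the detection-triggering step, where the plant mode and the controller's gain are mismatched and neither one-step estimate applies verbatim --- and, relatedly, choosing the reference mode $\rho$ so that a change of Lyapunov function is paid for at most once per mode-detection phase, not once per step. This is exactly what makes the exponents come out as $N(0,t)$ mode-jumps and $M(0,t)$ growth steps, so that~\eqref{ADT_condition}--\eqref{AAT_condition} produce precisely the rate $a$ in~\eqref{def:a} and the transient constant $b$ in~\eqref{def:b} rather than something looser (an extra $\lambda_u$- or $\mu$-factor per mode-detection phase).
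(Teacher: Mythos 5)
Your proposal is substantively the same proof as the paper's: per-mode quadratic Lyapunov functions $\norm{P_i^{1/2}x}$, a $\sqrt{\lambda}$ contraction on stabilization steps (justified, as you correctly observe, by the \emph{failure} of~\eqref{eq:switch detection} rather than by~\eqref{what_to_obey} itself, since the plant and controller modes may be desynchronized), a $\sqrt{\lambda_u}$ growth bound on mode-detection steps obtained by Schur complement/S-procedure on~\eqref{LMI_for_MD}, and a $\sqrt{\mu}$ penalty per change of reference mode. The difference is in how the average conditions are converted into~\eqref{ISS-estimate}: you unroll the scalar recursion and apply~\eqref{ADT_condition}--\eqref{AAT_condition} to every subinterval (both for the homogeneous part and for each noise injection), whereas the paper first builds discrete timers $\tau_d,\tau_a$ (Lemma~\ref{lem:timers}) that absorb the ADT/AAT slack, forms the augmented function $W=UV$ with $U=\sqrt{(\mu/\lambda)^{\tau_d}(\lambda_u/\lambda)^{\tau_a}}$, proves the \emph{uniform} one-step inequality $W(t+1)\le aW(t)+\tilde b q$ in three cases, and closes with the comparison principle. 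The two routes are equivalent in content; the timer construction buys a pointwise Lyapunov certificate, your direct unrolling avoids the auxiliary lemma. Incidentally, your forced-response constant $b/(1-a)$ is slightly tighter than the stated $ab/((1-a)\sqrt{\lambda})$, so~\eqref{ISS-estimate} still follows.

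The one point that needs care is exactly the one you flag: the detection-triggering step, where the plant has already switched to mode $k$ but the input is still $K_j x(t)$. Your fix --- invoke the $\sqrt{\lambda_u}$ bound, which requires $\norm{u(t)}\le c\norm{x(t)}$, hence the extra hypothesis $c\ge\max_{i\in\calP}\norm{K_i}$ --- is not licensed by the theorem statement, which imposes no relation between $c$ and the gains $K_i$. Moreover, if you "attach" that step to the following mode-detection phase while $M(t_a,t_b)$ in Assumption~\ref{ass:detection_after_exercution}\ref{itm:asump aat} only counts steps in $[t_i^m,t_i^s)$, your product bound $\mu^{N/2}\lambda_u^{M/2}\lambda^{(t-M)/2}$ undercounts the growth steps by one per phase, and the rate degrades from $a$ to $\sqrt{\lambda(\mu\lambda_u/\lambda^2)^{1/\tau}(\lambda_u/\lambda)^{\eta}}$ unless the convention for $\bT^m$ (and hence for $M$) is fixed so that the detection-triggering step is itself the first instant of the mode-detection phase. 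To be fair, the paper's own proof is no cleaner here: its Case~3 asserts the $\sqrt{\lambda}$ decay via~\eqref{what_to_obey} on a step where, by construction,~\eqref{eq:switch detection} holds, and its Case~1 asserts $\norm{u}\le c\norm{x}$ for a step whose input may be $K_jx$. So you have correctly isolated the genuine delicacy of the argument; just be explicit that the result as stated requires either the additional bound on $c$ or a (harmless) redefinition of $\mu$ absorbing one factor of $\lambda_u/\lambda$ per switch, together with a consistent convention for where $t_i^m$ sits.
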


The proof of Theorem~\ref{thm:stability_control} is provided in the
Appendix. The proof uses similar techniques as other methods which
  employ multiple Lyapunov functions. Such methods are common for
  stability analysis of switched systems~\cite{MM-DL:12,
    SL-AD-DL:21}.


We remark that condition~\eqref{LMI_for_MD} refers to the original
  true system and checking it requires knowledge of
  $\hat A_i, \hat B_i$, which are not readily available in our
  data-driven setup. However, this is not critical for the analysis,
  since Theorem~\ref{thm:stability_control} can be interpreted
  qualitatively: even if $\lambda_u$ and $k_i$ are not known a priori, 
  this LMI holds for each $i\in\calP$, for sufficiently large 
  $\lambda_u$ and $k_i$.

Given Assumption~\ref{ass:ass_on_data-I}, $\lambda < 1$ is an upper
bound on the decay rate during the stabilization phase. Instead,
$\lambda_u \ge 1$ is an upper bound on the growth rate during the mode
detection phase. The parameter $\mu$ corresponds to the destabilizing
effect introduced by each switching. If the subsystems are each very different from each other, then this could lead to significantly different $P_i$ matrices, leading to large values of $\mu$. The interpretation of
condition~\eqref{ADT_AAT_condition} is as follows: the first term
quantifies the combined effect on the growth rate of the state of the
mode detection and stabilization phases. The second term quantifies
the destabilizing effect of mode switches, which happen on average
every $\tau$ time instances.  For a given switched system, the
constants $\mu$, $\lambda$, and $\lambda_u$ are fixed. Hence, the
condition~\eqref{ADT_AAT_condition} is always satisfied if $\tau$ is
sufficiently large and $\eta$ is sufficiently small. This means that
practical exponential stability~\eqref{ISS-estimate} holds as
long as the system switches sufficiently infrequently and the mode
detection phases are sufficiently short.

Lastly, recall that the parameter $c$ constitutes a bound on the relative magnitude of the excitation inputs with respect to the state signal. A good choice of a value for $c$ is thus a balancing act between two objectives. Choosing a larger $c$ increases the destabilizing effect of the mode-detection phase. On the other hand, a small $c$ can also cause problems. For the noiseless case, numerical problems (e.g., in the verification of the kernel inclusion (10)) may occur when $c$ is too small. For the noisy case, the applied control may be dominated by the disturbance $w$ when $c$ is too small, resulting in insufficiently excited data.

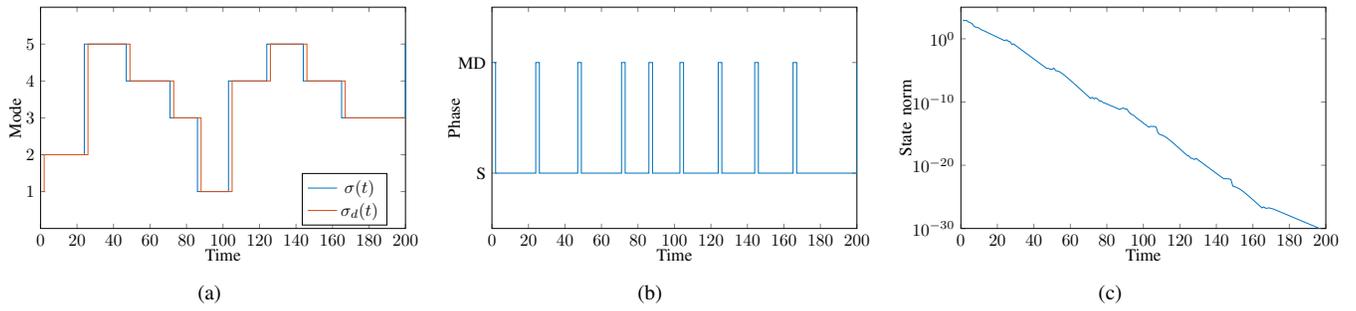
\begin{figure*}[ht!]
  \tikzset{every picture/.style={scale=0.65}}
  \centering
  \subfigure[]{
%
%
\definecolor{mycolor1}{rgb}{0.00000,0.44700,0.74100}%
\definecolor{mycolor2}{rgb}{0.85000,0.32500,0.09800}%
\begin{tikzpicture}

\begin{axis}[%
width=4.521in,
height=3.566in/1.3,
at={(0.758in,0.481in)},
scale only axis,
xmin=0,
xmax=200,
ymin=0,
ymax=6,
ytick ={1,2,3,4,5},
xlabel={Time},
ylabel={Mode},
xlabel near ticks,
ylabel near ticks,
axis background/.style={fill=white},
legend style={at={(axis cs:190,1.5)},anchor=north east}
]

\addplot[const plot, color=mycolor1] table[row sep=crcr] {%
1	2\\
2	2\\
3	2\\
4	2\\
5	2\\
6	2\\
7	2\\
8	2\\
9	2\\
10	2\\
11	2\\
12	2\\
13	2\\
14	2\\
15	2\\
16	2\\
17	2\\
18	2\\
19	2\\
20	2\\
21	2\\
22	2\\
23	2\\
24	5\\
25	5\\
26	5\\
27	5\\
28	5\\
29	5\\
30	5\\
31	5\\
32	5\\
33	5\\
34	5\\
35	5\\
36	5\\
37	5\\
38	5\\
39	5\\
40	5\\
41	5\\
42	5\\
43	5\\
44	5\\
45	5\\
46	5\\
47	4\\
48	4\\
49	4\\
50	4\\
51	4\\
52	4\\
53	4\\
54	4\\
55	4\\
56	4\\
57	4\\
58	4\\
59	4\\
60	4\\
61	4\\
62	4\\
63	4\\
64	4\\
65	4\\
66	4\\
67	4\\
68	4\\
69	4\\
70	4\\
71	3\\
72	3\\
73	3\\
74	3\\
75	3\\
76	3\\
77	3\\
78	3\\
79	3\\
80	3\\
81	3\\
82	3\\
83	3\\
84	3\\
85	3\\
86	1\\
87	1\\
88	1\\
89	1\\
90	1\\
91	1\\
92	1\\
93	1\\
94	1\\
95	1\\
96	1\\
97	1\\
98	1\\
99	1\\
100	1\\
101	1\\
102	1\\
103	4\\
104	4\\
105	4\\
106	4\\
107	4\\
108	4\\
109	4\\
110	4\\
111	4\\
112	4\\
113	4\\
114	4\\
115	4\\
116	4\\
117	4\\
118	4\\
119	4\\
120	4\\
121	4\\
122	4\\
123	4\\
124	5\\
125	5\\
126	5\\
127	5\\
128	5\\
129	5\\
130	5\\
131	5\\
132	5\\
133	5\\
134	5\\
135	5\\
136	5\\
137	5\\
138	5\\
139	5\\
140	5\\
141	5\\
142	5\\
143	5\\
144	4\\
145	4\\
146	4\\
147	4\\
148	4\\
149	4\\
150	4\\
151	4\\
152	4\\
153	4\\
154	4\\
155	4\\
156	4\\
157	4\\
158	4\\
159	4\\
160	4\\
161	4\\
162	4\\
163	4\\
164	4\\
165	3\\
166	3\\
167	3\\
168	3\\
169	3\\
170	3\\
171	3\\
172	3\\
173	3\\
174	3\\
175	3\\
176	3\\
177	3\\
178	3\\
179	3\\
180	3\\
181	3\\
182	3\\
183	3\\
184	3\\
185	3\\
186	3\\
187	3\\
188	3\\
189	3\\
190	3\\
191	3\\
192	3\\
193	3\\
194	3\\
195	3\\
196	3\\
197	3\\
198	3\\
199	3\\
200	5\\
201	5\\
};
\addlegendentry{$\sigma(t)$}

\addplot[const plot, color=mycolor2] table[row sep=crcr] {%
1	1\\
2	2\\
3	2\\
4	2\\
5	2\\
6	2\\
7	2\\
8	2\\
9	2\\
10	2\\
11	2\\
12	2\\
13	2\\
14	2\\
15	2\\
16	2\\
17	2\\
18	2\\
19	2\\
20	2\\
21	2\\
22	2\\
23	2\\
24	2\\
25	2\\
26	5\\
27	5\\
28	5\\
29	5\\
30	5\\
31	5\\
32	5\\
33	5\\
34	5\\
35	5\\
36	5\\
37	5\\
38	5\\
39	5\\
40	5\\
41	5\\
42	5\\
43	5\\
44	5\\
45	5\\
46	5\\
47	5\\
48	5\\
49	4\\
50	4\\
51	4\\
52	4\\
53	4\\
54	4\\
55	4\\
56	4\\
57	4\\
58	4\\
59	4\\
60	4\\
61	4\\
62	4\\
63	4\\
64	4\\
65	4\\
66	4\\
67	4\\
68	4\\
69	4\\
70	4\\
71	4\\
72	4\\
73	3\\
74	3\\
75	3\\
76	3\\
77	3\\
78	3\\
79	3\\
80	3\\
81	3\\
82	3\\
83	3\\
84	3\\
85	3\\
86	3\\
87	3\\
88	1\\
89	1\\
90	1\\
91	1\\
92	1\\
93	1\\
94	1\\
95	1\\
96	1\\
97	1\\
98	1\\
99	1\\
100	1\\
101	1\\
102	1\\
103	1\\
104	1\\
105	4\\
106	4\\
107	4\\
108	4\\
109	4\\
110	4\\
111	4\\
112	4\\
113	4\\
114	4\\
115	4\\
116	4\\
117	4\\
118	4\\
119	4\\
120	4\\
121	4\\
122	4\\
123	4\\
124	4\\
125	4\\
126	5\\
127	5\\
128	5\\
129	5\\
130	5\\
131	5\\
132	5\\
133	5\\
134	5\\
135	5\\
136	5\\
137	5\\
138	5\\
139	5\\
140	5\\
141	5\\
142	5\\
143	5\\
144	5\\
145	5\\
146	4\\
147	4\\
148	4\\
149	4\\
150	4\\
151	4\\
152	4\\
153	4\\
154	4\\
155	4\\
156	4\\
157	4\\
158	4\\
159	4\\
160	4\\
161	4\\
162	4\\
163	4\\
164	4\\
165	4\\
166	4\\
167	3\\
168	3\\
169	3\\
170	3\\
171	3\\
172	3\\
173	3\\
174	3\\
175	3\\
176	3\\
177	3\\
178	3\\
179	3\\
180	3\\
181	3\\
182	3\\
183	3\\
184	3\\
185	3\\
186	3\\
187	3\\
188	3\\
189	3\\
190	3\\
191	3\\
192	3\\
193	3\\
194	3\\
195	3\\
196	3\\
197	3\\
198	3\\
199	3\\
200	3\\
};
\addlegendentry{$\sigma_d(t)$}

\end{axis}
\end{tikzpicture}
  \subfigure[]{
%
%
\definecolor{mycolor1}{rgb}{0.00000,0.44700,0.74100}%
\begin{tikzpicture}

\begin{axis}[%
width=4.521in,
height=3.566in/1.3,
at={(0.758in,0.481in)},
scale only axis,
xmin=0,
xmax=200,
ymin=-0.5,
ymax=1.5,
ytick={0,1},
yticklabels={S,MD},
xlabel={Time},
ylabel={Phase},
xlabel near ticks,
ylabel near ticks,
axis background/.style={fill=white},
legend style={legend cell align=left, align=left, draw=white!15!black}
]
\addplot[const plot, color=mycolor1] table[row sep=crcr] {%
1	1\\
2	0\\
3	0\\
4	0\\
5	0\\
6	0\\
7	0\\
8	0\\
9	0\\
10	0\\
11	0\\
12	0\\
13	0\\
14	0\\
15	0\\
16	0\\
17	0\\
18	0\\
19	0\\
20	0\\
21	0\\
22	0\\
23	0\\
24	1\\
25	1\\
26	0\\
27	0\\
28	0\\
29	0\\
30	0\\
31	0\\
32	0\\
33	0\\
34	0\\
35	0\\
36	0\\
37	0\\
38	0\\
39	0\\
40	0\\
41	0\\
42	0\\
43	0\\
44	0\\
45	0\\
46	0\\
47	1\\
48	1\\
49	0\\
50	0\\
51	0\\
52	0\\
53	0\\
54	0\\
55	0\\
56	0\\
57	0\\
58	0\\
59	0\\
60	0\\
61	0\\
62	0\\
63	0\\
64	0\\
65	0\\
66	0\\
67	0\\
68	0\\
69	0\\
70	0\\
71	1\\
72	1\\
73	0\\
74	0\\
75	0\\
76	0\\
77	0\\
78	0\\
79	0\\
80	0\\
81	0\\
82	0\\
83	0\\
84	0\\
85	0\\
86	1\\
87	1\\
88	0\\
89	0\\
90	0\\
91	0\\
92	0\\
93	0\\
94	0\\
95	0\\
96	0\\
97	0\\
98	0\\
99	0\\
100	0\\
101	0\\
102	0\\
103	1\\
104	1\\
105	0\\
106	0\\
107	0\\
108	0\\
109	0\\
110	0\\
111	0\\
112	0\\
113	0\\
114	0\\
115	0\\
116	0\\
117	0\\
118	0\\
119	0\\
120	0\\
121	0\\
122	0\\
123	0\\
124	1\\
125	1\\
126	0\\
127	0\\
128	0\\
129	0\\
130	0\\
131	0\\
132	0\\
133	0\\
134	0\\
135	0\\
136	0\\
137	0\\
138	0\\
139	0\\
140	0\\
141	0\\
142	0\\
143	0\\
144	1\\
145	1\\
146	0\\
147	0\\
148	0\\
149	0\\
150	0\\
151	0\\
152	0\\
153	0\\
154	0\\
155	0\\
156	0\\
157	0\\
158	0\\
159	0\\
160	0\\
161	0\\
162	0\\
163	0\\
164	0\\
165	1\\
166	1\\
167	0\\
168	0\\
169	0\\
170	0\\
171	0\\
172	0\\
173	0\\
174	0\\
175	0\\
176	0\\
177	0\\
178	0\\
179	0\\
180	0\\
181	0\\
182	0\\
183	0\\
184	0\\
185	0\\
186	0\\
187	0\\
188	0\\
189	0\\
190	0\\
191	0\\
192	0\\
193	0\\
194	0\\
195	0\\
196	0\\
197	0\\
198	0\\
199	0\\
200	1\\
};

\end{axis}
\end{tikzpicture}
  \subfigure[]{
%
%
\definecolor{mycolor1}{rgb}{0.00000,0.44700,0.74100}%
\definecolor{mycolor2}{rgb}{0.85000,0.32500,0.09800}%
\definecolor{mycolor3}{rgb}{0.92900,0.69400,0.12500}%
\definecolor{mycolor4}{rgb}{0.49400,0.18400,0.55600}%
\definecolor{mycolor5}{rgb}{0.46600,0.67400,0.18800}%
\begin{tikzpicture}

\begin{axis}[%
width=4.521in,
height=3.566in/1.3,
at={(0.758in,0.481in)},
scale only axis,
xmin=0,
xmax=200,
ymode=log,
ymin=1e-30,
ymax=1e5,
yminorticks=true,
ytick={1e-30,1e-20,1e-10,1},
xlabel={Time},
ylabel={State norm},
ylabel shift=7pt,
xlabel near ticks,
ylabel near ticks,
axis background/.style={fill=white},
legend style={legend cell align=left, align=left, draw=white!15!black}
]
\addplot [color=mycolor1]
  table[row sep=crcr]{%
1	1000\\
2	719.825650272431\\
3	826.654635836452\\
4	480.125983230907\\
5	364.773657342021\\
6	272.405723441645\\
7	110.8628227282\\
8	68.7703263614127\\
9	65.1504579671684\\
10	47.9072669334307\\
11	29.3058964157484\\
12	21.4107005181385\\
13	16.857780451966\\
14	12.1976264452988\\
15	8.56267851945478\\
16	6.28685675660362\\
17	4.66418857762326\\
18	3.38676284612038\\
19	2.45074036173532\\
20	1.7923388301598\\
21	1.31183616122295\\
22	0.955308218680331\\
23	0.695683222704915\\
24	0.50783693540251\\
25	0.68967997938494\\
26	0.409125210018248\\
27	0.341312541993849\\
28	0.132022845760445\\
29	0.150503134755819\\
30	0.0858834599535407\\
31	0.0536127650383885\\
32	0.0324831152855582\\
33	0.0199953103550021\\
34	0.0122269477565289\\
35	0.00749982429013249\\
36	0.00459398582420338\\
37	0.002815772141036\\
38	0.00172537956998843\\
39	0.00105736799819505\\
40	0.000647952603087075\\
41	0.000397073868967123\\
42	0.000243329314999405\\
43	0.000149114467672402\\
44	9.13785225107378e-05\\
45	5.59975376664505e-05\\
46	3.43157525284107e-05\\
47	2.10289803303124e-05\\
48	2.0784522607209e-05\\
49	1.57032085628174e-05\\
50	1.53998198572536e-05\\
51	2.2814137997114e-05\\
52	9.06409764567052e-06\\
53	8.17013373088786e-06\\
54	6.3500671580191e-06\\
55	4.05327298734949e-06\\
56	2.51306839311172e-06\\
57	1.48056304804065e-06\\
58	8.57221772372823e-07\\
59	4.86780112435396e-07\\
60	2.73862114631071e-07\\
61	1.52773245496344e-07\\
62	8.4795182098447e-08\\
63	4.68705602928895e-08\\
64	2.58369006022426e-08\\
65	1.42120924513884e-08\\
66	7.80601954323866e-09\\
67	4.28264895142553e-09\\
68	2.34770481325973e-09\\
69	1.28620891589492e-09\\
70	7.04348513722443e-10\\
71	3.8558579074276e-10\\
72	5.5292710696184e-10\\
73	3.15238046404835e-10\\
74	4.32672601628946e-10\\
75	2.97483536070084e-10\\
76	1.44208905724439e-10\\
77	1.37384970585307e-10\\
78	8.01515657697728e-11\\
79	7.23088689400232e-11\\
80	4.63606884585057e-11\\
81	3.96990100110409e-11\\
82	2.68945358894141e-11\\
83	2.21690136590144e-11\\
84	1.55545472802772e-11\\
85	1.24855017518939e-11\\
86	8.96708199229481e-12\\
87	6.8432448635697e-12\\
88	8.9560103157835e-12\\
89	1.12847940516873e-11\\
90	8.22380865938345e-12\\
91	7.6237007422885e-12\\
92	2.79229236083491e-12\\
93	1.41110861384756e-12\\
94	9.95813220344988e-13\\
95	6.60377098763436e-13\\
96	3.51030300540622e-13\\
97	2.07413741672865e-13\\
98	1.35013465594176e-13\\
99	8.00949610369151e-14\\
100	4.71406661459351e-14\\
101	2.84658475546433e-14\\
102	1.78590835776945e-14\\
103	1.07731447386769e-14\\
104	1.43850083661143e-14\\
105	1.3668892460625e-14\\
106	1.39975898287852e-14\\
107	1.06859639032919e-14\\
108	1.35939945065252e-15\\
109	7.54836741383384e-16\\
110	6.38500404098833e-16\\
111	4.54787387576655e-16\\
112	3.25101489658756e-16\\
113	2.00120444685662e-16\\
114	1.21380555784221e-16\\
115	7.0416870602293e-17\\
116	4.03761745096605e-17\\
117	2.27736924983427e-17\\
118	1.27537314914733e-17\\
119	7.09097361033794e-18\\
120	3.92649788802011e-18\\
121	2.16663691520128e-18\\
122	1.19284972999674e-18\\
123	6.55550424533835e-19\\
124	3.59821753295367e-19\\
125	2.86862008025712e-19\\
126	1.29343571220603e-19\\
127	1.19666362151485e-19\\
128	7.98565055043292e-20\\
129	1.23631409539482e-19\\
130	6.95569866173842e-20\\
131	4.35430294644581e-20\\
132	2.63119841984975e-20\\
133	1.6211335640921e-20\\
134	9.90838987831476e-21\\
135	6.07887061314121e-21\\
136	3.7232420658732e-21\\
137	2.2821646211534e-21\\
138	1.39838269829908e-21\\
139	8.56980958728439e-22\\
140	5.25153929496264e-22\\
141	3.21821771115445e-22\\
142	1.97214215841686e-22\\
143	1.20854747182631e-22\\
144	7.40607314213418e-23\\
145	7.31997879547891e-23\\
146	7.3547777847303e-23\\
147	6.89907022348619e-23\\
148	5.53149104458798e-23\\
149	5.22717452460782e-24\\
150	3.98393084335806e-24\\
151	3.27613944050609e-24\\
152	2.27465150603544e-24\\
153	1.63974689783198e-24\\
154	1.00314418754494e-24\\
155	6.0931888973928e-25\\
156	3.52952086838162e-25\\
157	2.02422203998216e-25\\
158	1.14119773796868e-25\\
159	6.39080460103453e-26\\
160	3.5526255233823e-26\\
161	1.96711573799025e-26\\
162	1.08537345104103e-26\\
163	5.97537070942213e-27\\
164	3.28375250639595e-27\\
165	1.80236696926558e-27\\
166	2.58537085217434e-27\\
167	1.55476596510418e-27\\
168	1.63724581116593e-27\\
169	1.85058142937846e-27\\
170	1.4680350455725e-27\\
171	1.26576818588808e-27\\
172	9.43804353253215e-28\\
173	7.38282207278046e-28\\
174	5.52691314712496e-28\\
175	4.22170768683378e-28\\
176	3.17600507192221e-28\\
177	2.40992179762463e-28\\
178	1.81736905076436e-28\\
179	1.37605227797023e-28\\
180	1.03881917346442e-28\\
181	7.85918634473667e-29\\
182	5.93608252949924e-29\\
183	4.48927334283836e-29\\
184	3.39165571515446e-29\\
185	2.5644914281827e-29\\
186	1.93777335784184e-29\\
187	1.4650088767191e-29\\
188	1.10709175840781e-29\\
189	8.36926186226e-30\\
190	6.32496372096004e-30\\
191	4.78122265236644e-30\\
192	3.61350364794985e-30\\
193	2.73145468898762e-30\\
194	2.06441173024304e-30\\
195	1.56045484237103e-30\\
196	1.17940331547537e-30\\
197	8.9147649461552e-31\\
198	6.73793996921848e-31\\
199	5.09295234769518e-31\\
200	3.84938260103913e-31\\
};

\end{axis}
\end{tikzpicture}
  \caption{Unknown switched linear system with $5$ states, $3$ inputs,
    and $ 5$ modes evolving under the data-driven online switched
    feedback controller of Algorithm~\ref{alg:controller_1}.
    Simulations correspond to the case with noiseless data and the
    controller knowing when the system switches between modes. Plot (a) shows the
    switching signals of the system ($\sigma$) and of the controller
    ($\sigma_d$). Meanwhile, (b) plots the active (mode detection or
    stabilization) phase of the controller across time. Lastly, (c) shows the
    semi-logarithmic plot of the norm of the state as a function of time.
  }\label{fig:example}
\end{figure*}

\begin{figure*}[ht!]
  \tikzset{every picture/.style={scale=0.65}}
  \centering
  \subfigure[]{
%
%
\definecolor{mycolor1}{rgb}{0.00000,0.44700,0.74100}%
\definecolor{mycolor2}{rgb}{0.85000,0.32500,0.09800}%
\begin{tikzpicture}

\begin{axis}[%
width=4.521in,
height=3.566in/1.3,
at={(0.758in,0.481in)},
scale only axis,
xmin=0,
xmax=200,
ymin=0,
ymax=6,
ytick ={1,2,3,4,5},
xlabel={Time},
ylabel={Mode},
xlabel near ticks,
ylabel near ticks,
axis background/.style={fill=white},
legend style={at={(axis cs:190,1.5)},anchor=north east}
]

\addplot[const plot, color=mycolor1] table[row sep=crcr] {%
1	2\\
2	2\\
3	2\\
4	2\\
5	2\\
6	2\\
7	2\\
8	2\\
9	2\\
10	2\\
11	2\\
12	2\\
13	2\\
14	2\\
15	2\\
16	2\\
17	2\\
18	2\\
19	2\\
20	2\\
21	2\\
22	2\\
23	2\\
24	5\\
25	5\\
26	5\\
27	5\\
28	5\\
29	5\\
30	5\\
31	5\\
32	5\\
33	5\\
34	5\\
35	5\\
36	5\\
37	5\\
38	5\\
39	5\\
40	5\\
41	5\\
42	5\\
43	5\\
44	5\\
45	5\\
46	5\\
47	4\\
48	4\\
49	4\\
50	4\\
51	4\\
52	4\\
53	4\\
54	4\\
55	4\\
56	4\\
57	4\\
58	4\\
59	4\\
60	4\\
61	4\\
62	4\\
63	4\\
64	4\\
65	4\\
66	4\\
67	4\\
68	4\\
69	4\\
70	4\\
71	3\\
72	3\\
73	3\\
74	3\\
75	3\\
76	3\\
77	3\\
78	3\\
79	3\\
80	3\\
81	3\\
82	3\\
83	3\\
84	3\\
85	3\\
86	1\\
87	1\\
88	1\\
89	1\\
90	1\\
91	1\\
92	1\\
93	1\\
94	1\\
95	1\\
96	1\\
97	1\\
98	1\\
99	1\\
100	1\\
101	1\\
102	1\\
103	4\\
104	4\\
105	4\\
106	4\\
107	4\\
108	4\\
109	4\\
110	4\\
111	4\\
112	4\\
113	4\\
114	4\\
115	4\\
116	4\\
117	4\\
118	4\\
119	4\\
120	4\\
121	4\\
122	4\\
123	4\\
124	5\\
125	5\\
126	5\\
127	5\\
128	5\\
129	5\\
130	5\\
131	5\\
132	5\\
133	5\\
134	5\\
135	5\\
136	5\\
137	5\\
138	5\\
139	5\\
140	5\\
141	5\\
142	5\\
143	5\\
144	4\\
145	4\\
146	4\\
147	4\\
148	4\\
149	4\\
150	4\\
151	4\\
152	4\\
153	4\\
154	4\\
155	4\\
156	4\\
157	4\\
158	4\\
159	4\\
160	4\\
161	4\\
162	4\\
163	4\\
164	4\\
165	3\\
166	3\\
167	3\\
168	3\\
169	3\\
170	3\\
171	3\\
172	3\\
173	3\\
174	3\\
175	3\\
176	3\\
177	3\\
178	3\\
179	3\\
180	3\\
181	3\\
182	3\\
183	3\\
184	3\\
185	3\\
186	3\\
187	3\\
188	3\\
189	3\\
190	3\\
191	3\\
192	3\\
193	3\\
194	3\\
195	3\\
196	3\\
197	3\\
198	3\\
199	3\\
200	5\\
201	5\\
};
\addlegendentry{$\sigma(t)$}

\addplot[const plot, color=mycolor2] table[row sep=crcr] {%
1	1\\
2	2\\
3	2\\
4	2\\
5	2\\
6	2\\
7	2\\
8	2\\
9	2\\
10	2\\
11	2\\
12	2\\
13	2\\
14	2\\
15	2\\
16	2\\
17	2\\
18	2\\
19	2\\
20	2\\
21	2\\
22	2\\
23	2\\
24	2\\
25	2\\
26	5\\
27	5\\
28	5\\
29	5\\
30	5\\
31	5\\
32	5\\
33	5\\
34	5\\
35	5\\
36	5\\
37	5\\
38	5\\
39	5\\
40	5\\
41	5\\
42	5\\
43	5\\
44	5\\
45	5\\
46	5\\
47	5\\
48	5\\
49	4\\
50	4\\
51	4\\
52	4\\
53	4\\
54	4\\
55	4\\
56	4\\
57	4\\
58	4\\
59	4\\
60	4\\
61	4\\
62	4\\
63	4\\
64	4\\
65	4\\
66	4\\
67	4\\
68	4\\
69	4\\
70	4\\
71	4\\
72	4\\
73	3\\
74	3\\
75	3\\
76	3\\
77	3\\
78	3\\
79	3\\
80	3\\
81	3\\
82	3\\
83	3\\
84	3\\
85	3\\
86	3\\
87	3\\
88	1\\
89	1\\
90	1\\
91	1\\
92	1\\
93	1\\
94	1\\
95	1\\
96	1\\
97	1\\
98	1\\
99	1\\
100	1\\
101	1\\
102	1\\
103	1\\
104	1\\
105	4\\
106	4\\
107	4\\
108	4\\
109	4\\
110	4\\
111	4\\
112	4\\
113	4\\
114	4\\
115	4\\
116	4\\
117	4\\
118	4\\
119	4\\
120	4\\
121	4\\
122	4\\
123	4\\
124	4\\
125	4\\
126	5\\
127	5\\
128	5\\
129	5\\
130	5\\
131	5\\
132	5\\
133	5\\
134	5\\
135	5\\
136	5\\
137	5\\
138	5\\
139	5\\
140	5\\
141	5\\
142	5\\
143	5\\
144	5\\
145	5\\
146	4\\
147	4\\
148	4\\
149	4\\
150	4\\
151	4\\
152	4\\
153	4\\
154	4\\
155	4\\
156	4\\
157	4\\
158	4\\
159	4\\
160	4\\
161	4\\
162	4\\
163	4\\
164	4\\
165	4\\
166	4\\
167	3\\
168	3\\
169	3\\
170	3\\
171	3\\
172	3\\
173	3\\
174	3\\
175	3\\
176	3\\
177	3\\
178	3\\
179	3\\
180	3\\
181	3\\
182	3\\
183	3\\
184	3\\
185	3\\
186	3\\
187	3\\
188	3\\
189	3\\
190	3\\
191	3\\
192	3\\
193	3\\
194	3\\
195	3\\
196	3\\
197	3\\
198	3\\
199	3\\
200	3\\
};
\addlegendentry{$\sigma_d(t)$}

\end{axis}
\end{tikzpicture}
  \subfigure[]{
%
%
\definecolor{mycolor1}{rgb}{0.00000,0.44700,0.74100}%
\begin{tikzpicture}

\begin{axis}[%
width=4.521in,
height=3.566in/1.3,
at={(0.758in,0.481in)},
scale only axis,
xmin=0,
xmax=200,
ymin=-0.5,
ymax=1.5,
ytick={0,1},
yticklabels={S,MD},
xlabel={Time},
ylabel={Phase},
xlabel near ticks,
ylabel near ticks,
axis background/.style={fill=white},
legend style={legend cell align=left, align=left, draw=white!15!black}
]
\addplot[const plot, color=mycolor1] table[row sep=crcr] {%
1	1\\
2	0\\
3	0\\
4	0\\
5	0\\
6	0\\
7	0\\
8	0\\
9	0\\
10	0\\
11	0\\
12	0\\
13	0\\
14	0\\
15	0\\
16	0\\
17	0\\
18	0\\
19	0\\
20	0\\
21	0\\
22	0\\
23	0\\
24	1\\
25	1\\
26	0\\
27	0\\
28	0\\
29	0\\
30	0\\
31	0\\
32	0\\
33	0\\
34	0\\
35	0\\
36	0\\
37	0\\
38	0\\
39	0\\
40	0\\
41	0\\
42	0\\
43	0\\
44	0\\
45	0\\
46	0\\
47	1\\
48	1\\
49	0\\
50	0\\
51	0\\
52	0\\
53	0\\
54	0\\
55	0\\
56	0\\
57	0\\
58	0\\
59	0\\
60	0\\
61	0\\
62	0\\
63	0\\
64	0\\
65	0\\
66	0\\
67	0\\
68	0\\
69	0\\
70	0\\
71	1\\
72	1\\
73	0\\
74	0\\
75	0\\
76	0\\
77	0\\
78	0\\
79	0\\
80	0\\
81	0\\
82	0\\
83	0\\
84	0\\
85	0\\
86	1\\
87	1\\
88	0\\
89	0\\
90	0\\
91	0\\
92	0\\
93	0\\
94	0\\
95	0\\
96	0\\
97	0\\
98	0\\
99	0\\
100	0\\
101	0\\
102	0\\
103	1\\
104	1\\
105	0\\
106	0\\
107	0\\
108	0\\
109	0\\
110	0\\
111	0\\
112	0\\
113	0\\
114	0\\
115	0\\
116	0\\
117	0\\
118	0\\
119	0\\
120	0\\
121	0\\
122	0\\
123	0\\
124	1\\
125	1\\
126	0\\
127	0\\
128	0\\
129	0\\
130	0\\
131	0\\
132	0\\
133	0\\
134	0\\
135	0\\
136	0\\
137	0\\
138	0\\
139	0\\
140	0\\
141	0\\
142	0\\
143	0\\
144	1\\
145	1\\
146	0\\
147	0\\
148	0\\
149	0\\
150	0\\
151	0\\
152	0\\
153	0\\
154	0\\
155	0\\
156	0\\
157	0\\
158	0\\
159	0\\
160	0\\
161	0\\
162	0\\
163	0\\
164	0\\
165	1\\
166	1\\
167	0\\
168	0\\
169	0\\
170	0\\
171	0\\
172	0\\
173	0\\
174	0\\
175	0\\
176	0\\
177	0\\
178	0\\
179	0\\
180	0\\
181	0\\
182	0\\
183	0\\
184	0\\
185	0\\
186	0\\
187	0\\
188	0\\
189	0\\
190	0\\
191	0\\
192	0\\
193	0\\
194	0\\
195	0\\
196	0\\
197	0\\
198	0\\
199	0\\
200	1\\
};

\end{axis}
\end{tikzpicture}
  \subfigure[]{
%
%
\definecolor{mycolor1}{rgb}{0.00000,0.44700,0.74100}%
\definecolor{mycolor2}{rgb}{0.85000,0.32500,0.09800}%
\definecolor{mycolor3}{rgb}{0.92900,0.69400,0.12500}%
\definecolor{mycolor4}{rgb}{0.49400,0.18400,0.55600}%
\definecolor{mycolor5}{rgb}{0.46600,0.67400,0.18800}%
\begin{tikzpicture}

\begin{axis}[%
width=4.521in,
height=3.566in/1.3,
at={(0.758in,0.481in)},
scale only axis,
xmin=0,
xmax=200,
ymode=log,
ymin=1e-30,
ymax=1e5,
yminorticks=true,
ytick={1e-30,1e-20,1e-10,1},
xlabel={Time},
ylabel={State norm},
ylabel shift=7pt,
xlabel near ticks,
ylabel near ticks,
axis background/.style={fill=white},
legend style={legend cell align=left, align=left, draw=white!15!black}
]
\addplot [color=mycolor1]
  table[row sep=crcr]{%
1	1000\\
2	760.688064512933\\
3	771.132265061172\\
4	508.679758206459\\
5	379.386941441333\\
6	289.509350700353\\
7	124.760651031162\\
8	69.0451047294562\\
9	66.1728245122776\\
10	50.3307007408891\\
11	30.7586359201081\\
12	21.944318128895\\
13	17.3593768307039\\
14	12.6785687797479\\
15	8.8776244898947\\
16	6.48843812914195\\
17	4.82163585108039\\
18	3.50802146451094\\
19	2.53615263590217\\
20	1.85317814209426\\
21	1.35704904209297\\
22	0.988604905339854\\
23	0.71973780613035\\
24	0.52531345003943\\
25	0.713519409729402\\
26	0.474279043839899\\
27	0.249594196190693\\
28	0.088944909804909\\
29	0.0669733630021888\\
30	0.0405340124576429\\
31	0.0248232777051384\\
32	0.0152198544388704\\
33	0.00932485241724303\\
34	0.00571489193337747\\
35	0.00350198331034376\\
36	0.00214608588137062\\
37	0.00131512790632602\\
38	0.00080592447508812\\
39	0.000493876365334856\\
40	0.000302651788689149\\
41	0.000185467471151437\\
42	0.00011365602919254\\
43	6.96493563348407e-05\\
44	4.2681707045876e-05\\
45	2.61557051934647e-05\\
46	1.60284340973352e-05\\
47	9.82235788650718e-06\\
48	9.70817490447424e-06\\
49	7.73791684748625e-06\\
50	9.64937090063346e-06\\
51	1.22019876438282e-05\\
52	4.28454525022286e-06\\
53	4.07789628883101e-06\\
54	3.17736912881401e-06\\
55	2.05340715482169e-06\\
56	1.2745695023317e-06\\
57	7.53990510922662e-07\\
58	4.37017416592529e-07\\
59	2.48532478567754e-07\\
60	1.39913273893738e-07\\
61	7.81011136321284e-08\\
62	4.33650339142082e-08\\
63	2.39775809100615e-08\\
64	1.32200646156105e-08\\
65	7.27312769625362e-09\\
66	3.99522426927452e-09\\
67	2.19210337632106e-09\\
68	1.20176213489397e-09\\
69	6.5842520484224e-10\\
70	3.60576165274732e-10\\
71	1.97397309909608e-10\\
72	2.83067070436416e-10\\
73	1.52001077159496e-10\\
74	1.89016063471986e-10\\
75	1.46448850828574e-10\\
76	6.22521377092676e-11\\
77	6.84870953715121e-11\\
78	3.07768247387797e-11\\
79	3.34744210088252e-11\\
80	1.58949344511205e-11\\
81	1.68724925607867e-11\\
82	8.73894047616738e-12\\
83	8.78456461042296e-12\\
84	4.99360873096434e-12\\
85	4.70375207939275e-12\\
86	2.89513701041948e-12\\
87	3.06089921060689e-12\\
88	2.84068644879952e-12\\
89	2.63057735682656e-12\\
90	1.43617832301586e-12\\
91	1.46025671225999e-12\\
92	6.26381644414496e-13\\
93	3.1187668212097e-13\\
94	2.0130950826513e-13\\
95	1.41039106131694e-13\\
96	7.69958266264172e-14\\
97	4.53307296841971e-14\\
98	2.83698665429776e-14\\
99	1.71793110387657e-14\\
100	1.02270331239837e-14\\
101	6.20236816852978e-15\\
102	3.84956798852906e-15\\
103	2.30789466596434e-15\\
104	2.81445984367035e-15\\
105	2.596544484858e-15\\
106	1.94898522721954e-15\\
107	1.53152852485794e-15\\
108	3.57402384572363e-16\\
109	3.16536131498398e-16\\
110	2.80309284999677e-16\\
111	1.79055481953183e-16\\
112	1.14403364856714e-16\\
113	6.76823607911122e-17\\
114	3.95759391879581e-17\\
115	2.25442892783405e-17\\
116	1.27330129541177e-17\\
117	7.11643833196715e-18\\
118	3.95696167425354e-18\\
119	2.18953913115175e-18\\
120	1.20803008965306e-18\\
121	6.64891760206325e-19\\
122	3.65361357245566e-19\\
123	2.00514505334333e-19\\
124	1.09947010971014e-19\\
125	8.76465345643627e-20\\
126	4.03780053047915e-20\\
127	4.30879757026165e-20\\
128	3.00166559210322e-20\\
129	4.59231836643234e-20\\
130	2.5943609374493e-20\\
131	1.62319235806166e-20\\
132	9.81512113678236e-21\\
133	6.04596660679676e-21\\
134	3.69572992232482e-21\\
135	2.26724765601619e-21\\
136	1.38869578686369e-21\\
137	8.51193846587101e-22\\
138	5.2156628771019e-22\\
139	3.19634583307149e-22\\
140	1.95870758810971e-22\\
141	1.20032327864129e-22\\
142	7.35565083209142e-23\\
143	4.50761241574706e-23\\
144	2.76230013895764e-23\\
145	2.73018887064255e-23\\
146	2.64843561171624e-23\\
147	2.83831327712905e-23\\
148	2.46654977897762e-23\\
149	2.54802703577896e-24\\
150	1.82313532884347e-24\\
151	1.76451730733547e-24\\
152	1.21160870030867e-24\\
153	8.44870047971214e-25\\
154	5.14451151837338e-25\\
155	3.09543381473871e-25\\
156	1.78769244188441e-25\\
157	1.02167044697483e-25\\
158	5.75029527588033e-26\\
159	3.21523130717071e-26\\
160	1.78570009585987e-26\\
161	9.88009397956309e-27\\
162	5.44870861492653e-27\\
163	2.99854392382179e-27\\
164	1.64739545424459e-27\\
165	9.04026502358457e-28\\
166	1.29673265631346e-27\\
167	9.23882007066845e-28\\
168	8.55939153529936e-28\\
169	8.97794024417912e-28\\
170	8.30223291241501e-28\\
171	6.93832914258379e-28\\
172	5.41131337913786e-28\\
173	4.14012232920596e-28\\
174	3.16088919895275e-28\\
175	2.38289756395152e-28\\
176	1.81195281401604e-28\\
177	1.36375125686045e-28\\
178	1.03523614841479e-28\\
179	7.79714349024204e-29\\
180	5.91179277491676e-29\\
181	4.45674326151081e-29\\
182	3.37605509414942e-29\\
183	2.54704041085011e-29\\
184	1.92815281869761e-29\\
185	1.45548846056079e-29\\
186	1.10131196860441e-29\\
187	8.316656926854e-30\\
188	6.29081636877524e-30\\
189	4.7518781047263e-30\\
190	3.5935481759545e-30\\
191	2.71497121591369e-30\\
192	2.05283403375221e-30\\
193	1.55114925408581e-30\\
194	1.17271882282198e-30\\
195	8.86204369441418e-31\\
196	6.69947366320787e-31\\
197	5.06300768300149e-31\\
198	3.82729681074529e-31\\
199	2.89254017722404e-31\\
200	2.18648671194686e-31\\
};

\end{axis}
\end{tikzpicture}
  \caption{Simulation results for the system considered in
    Figure~\ref{fig:example} with the same noiseless data, but with a
    switching signal completely unknown to the
    controller, requiring the use of \eqref{eq:switch detection} to detect switches.}\label{fig:example1}
\end{figure*}

\begin{figure*}[ht!]
  \tikzset{every picture/.style={scale=0.65}}
  \centering
  \subfigure[]{
%
%
\definecolor{mycolor1}{rgb}{0.00000,0.44700,0.74100}%
\definecolor{mycolor2}{rgb}{0.85000,0.32500,0.09800}%
\begin{tikzpicture}

\begin{axis}[%
width=4.521in,
height=3.566in/1.3,
at={(0.758in,0.481in)},
scale only axis,
xmin=0,
xmax=200,
ymin=0,
ymax=6,
ytick ={1,2,3,4,5},
xlabel={Time},
ylabel={Mode},
xlabel near ticks,
ylabel near ticks,
axis background/.style={fill=white},
legend style={at={(axis cs:190,1.5)},anchor=north east}
]

\addplot[const plot, color=mycolor1] table[row sep=crcr] {%
1	2\\
2	2\\
3	2\\
4	2\\
5	2\\
6	2\\
7	2\\
8	2\\
9	2\\
10	2\\
11	2\\
12	2\\
13	2\\
14	2\\
15	2\\
16	2\\
17	2\\
18	2\\
19	2\\
20	2\\
21	2\\
22	2\\
23	2\\
24	5\\
25	5\\
26	5\\
27	5\\
28	5\\
29	5\\
30	5\\
31	5\\
32	5\\
33	5\\
34	5\\
35	5\\
36	5\\
37	5\\
38	5\\
39	5\\
40	5\\
41	5\\
42	5\\
43	5\\
44	5\\
45	5\\
46	5\\
47	4\\
48	4\\
49	4\\
50	4\\
51	4\\
52	4\\
53	4\\
54	4\\
55	4\\
56	4\\
57	4\\
58	4\\
59	4\\
60	4\\
61	4\\
62	4\\
63	4\\
64	4\\
65	4\\
66	4\\
67	4\\
68	4\\
69	4\\
70	4\\
71	3\\
72	3\\
73	3\\
74	3\\
75	3\\
76	3\\
77	3\\
78	3\\
79	3\\
80	3\\
81	3\\
82	3\\
83	3\\
84	3\\
85	3\\
86	1\\
87	1\\
88	1\\
89	1\\
90	1\\
91	1\\
92	1\\
93	1\\
94	1\\
95	1\\
96	1\\
97	1\\
98	1\\
99	1\\
100	1\\
101	1\\
102	1\\
103	4\\
104	4\\
105	4\\
106	4\\
107	4\\
108	4\\
109	4\\
110	4\\
111	4\\
112	4\\
113	4\\
114	4\\
115	4\\
116	4\\
117	4\\
118	4\\
119	4\\
120	4\\
121	4\\
122	4\\
123	4\\
124	5\\
125	5\\
126	5\\
127	5\\
128	5\\
129	5\\
130	5\\
131	5\\
132	5\\
133	5\\
134	5\\
135	5\\
136	5\\
137	5\\
138	5\\
139	5\\
140	5\\
141	5\\
142	5\\
143	5\\
144	4\\
145	4\\
146	4\\
147	4\\
148	4\\
149	4\\
150	4\\
151	4\\
152	4\\
153	4\\
154	4\\
155	4\\
156	4\\
157	4\\
158	4\\
159	4\\
160	4\\
161	4\\
162	4\\
163	4\\
164	4\\
165	3\\
166	3\\
167	3\\
168	3\\
169	3\\
170	3\\
171	3\\
172	3\\
173	3\\
174	3\\
175	3\\
176	3\\
177	3\\
178	3\\
179	3\\
180	3\\
181	3\\
182	3\\
183	3\\
184	3\\
185	3\\
186	3\\
187	3\\
188	3\\
189	3\\
190	3\\
191	3\\
192	3\\
193	3\\
194	3\\
195	3\\
196	3\\
197	3\\
198	3\\
199	3\\
200	5\\
201	5\\
};
\addlegendentry{$\sigma(t)$}

\addplot[const plot, color=mycolor2] table[row sep=crcr] {%
1	1\\
2	1\\
3	2\\
4	2\\
5	2\\
6	2\\
7	2\\
8	2\\
9	2\\
10	2\\
11	2\\
12	2\\
13	2\\
14	2\\
15	2\\
16	2\\
17	2\\
18	2\\
19	2\\
20	2\\
21	2\\
22	2\\
23	2\\
24	2\\
25	2\\
26	2\\
27	2\\
28	5\\
29	5\\
30	5\\
31	5\\
32	5\\
33	5\\
34	5\\
35	5\\
36	5\\
37	5\\
38	5\\
39	5\\
40	5\\
41	5\\
42	5\\
43	5\\
44	5\\
45	5\\
46	5\\
47	5\\
48	5\\
49	5\\
50	5\\
51	5\\
52	5\\
53	5\\
54	4\\
55	4\\
56	4\\
57	4\\
58	4\\
59	4\\
60	4\\
61	4\\
62	4\\
63	4\\
64	4\\
65	4\\
66	4\\
67	4\\
68	4\\
69	4\\
70	4\\
71	4\\
72	4\\
73	4\\
74	4\\
75	4\\
76	4\\
77	3\\
78	3\\
79	3\\
80	3\\
81	3\\
82	3\\
83	3\\
84	3\\
85	3\\
86	3\\
87	3\\
88	3\\
89	3\\
90	3\\
91	1\\
92	1\\
93	1\\
94	1\\
95	1\\
96	1\\
97	1\\
98	1\\
99	1\\
100	1\\
101	1\\
102	1\\
103	1\\
104	1\\
105	1\\
106	1\\
107	1\\
108	4\\
109	4\\
110	4\\
111	4\\
112	4\\
113	4\\
114	4\\
115	4\\
116	4\\
117	4\\
118	4\\
119	4\\
120	4\\
121	4\\
122	4\\
123	4\\
124	4\\
125	4\\
126	4\\
127	4\\
128	4\\
129	5\\
130	5\\
131	5\\
132	5\\
133	5\\
134	5\\
135	5\\
136	5\\
137	5\\
138	5\\
139	5\\
140	5\\
141	5\\
142	5\\
143	5\\
144	5\\
145	5\\
146	5\\
147	5\\
148	5\\
149	5\\
150	5\\
151	5\\
152	4\\
153	4\\
154	4\\
155	4\\
156	4\\
157	4\\
158	4\\
159	4\\
160	4\\
161	4\\
162	4\\
163	4\\
164	4\\
165	4\\
166	4\\
167	4\\
168	4\\
169	4\\
170	3\\
171	3\\
172	3\\
173	3\\
174	3\\
175	3\\
176	3\\
177	3\\
178	3\\
179	3\\
180	3\\
181	3\\
182	3\\
183	3\\
184	3\\
185	3\\
186	3\\
187	3\\
188	3\\
189	3\\
190	3\\
191	3\\
192	3\\
193	3\\
194	3\\
195	3\\
196	3\\
197	3\\
198	3\\
199	3\\
200	3\\
};
\addlegendentry{$\sigma_d(t)$}

\end{axis}
\end{tikzpicture}
  \subfigure[]{
%
%
\definecolor{mycolor1}{rgb}{0.00000,0.44700,0.74100}%
\begin{tikzpicture}

\begin{axis}[%
width=4.521in,
height=3.566in/1.3,
at={(0.758in,0.481in)},
scale only axis,
xmin=0,
xmax=200,
ymin=-0.5,
ymax=1.5,
ytick={0,1},
yticklabels={S,MD},
xlabel={Time},
ylabel={Phase},
xlabel near ticks,
ylabel near ticks,
axis background/.style={fill=white},
legend style={legend cell align=left, align=left, draw=white!15!black}
]
\addplot[const plot, color=mycolor1] table[row sep=crcr] {%
1	1\\
2	1\\
3	0\\
4	0\\
5	0\\
6	0\\
7	0\\
8	0\\
9	0\\
10	0\\
11	0\\
12	0\\
13	0\\
14	0\\
15	0\\
16	0\\
17	0\\
18	0\\
19	0\\
20	0\\
21	0\\
22	0\\
23	0\\
24	0\\
25	1\\
26	1\\
27	1\\
28	0\\
29	0\\
30	0\\
31	0\\
32	0\\
33	0\\
34	0\\
35	0\\
36	0\\
37	0\\
38	0\\
39	0\\
40	0\\
41	0\\
42	0\\
43	0\\
44	0\\
45	0\\
46	0\\
47	0\\
48	1\\
49	1\\
50	1\\
51	1\\
52	1\\
53	1\\
54	0\\
55	0\\
56	0\\
57	0\\
58	0\\
59	0\\
60	0\\
61	0\\
62	0\\
63	0\\
64	0\\
65	0\\
66	0\\
67	0\\
68	0\\
69	0\\
70	0\\
71	0\\
72	1\\
73	1\\
74	1\\
75	1\\
76	1\\
77	0\\
78	0\\
79	0\\
80	0\\
81	0\\
82	0\\
83	0\\
84	0\\
85	0\\
86	0\\
87	1\\
88	1\\
89	1\\
90	1\\
91	0\\
92	0\\
93	0\\
94	0\\
95	0\\
96	0\\
97	0\\
98	0\\
99	0\\
100	0\\
101	0\\
102	0\\
103	0\\
104	1\\
105	1\\
106	1\\
107	1\\
108	0\\
109	0\\
110	0\\
111	0\\
112	0\\
113	0\\
114	0\\
115	0\\
116	0\\
117	0\\
118	0\\
119	0\\
120	0\\
121	0\\
122	0\\
123	0\\
124	0\\
125	1\\
126	1\\
127	1\\
128	1\\
129	0\\
130	0\\
131	0\\
132	0\\
133	0\\
134	0\\
135	0\\
136	0\\
137	0\\
138	0\\
139	0\\
140	0\\
141	0\\
142	0\\
143	0\\
144	0\\
145	1\\
146	1\\
147	1\\
148	1\\
149	1\\
150	1\\
151	1\\
152	0\\
153	0\\
154	0\\
155	0\\
156	0\\
157	0\\
158	0\\
159	0\\
160	0\\
161	0\\
162	0\\
163	0\\
164	0\\
165	0\\
166	1\\
167	1\\
168	1\\
169	1\\
170	0\\
171	0\\
172	0\\
173	0\\
174	0\\
175	0\\
176	0\\
177	0\\
178	0\\
179	0\\
180	0\\
181	0\\
182	0\\
183	0\\
184	0\\
185	0\\
186	0\\
187	0\\
188	0\\
189	0\\
190	0\\
191	0\\
192	0\\
193	0\\
194	0\\
195	0\\
196	0\\
197	0\\
198	0\\
199	0\\
200	0\\
};

\end{axis}
\end{tikzpicture}
  \subfigure[]{
    \input{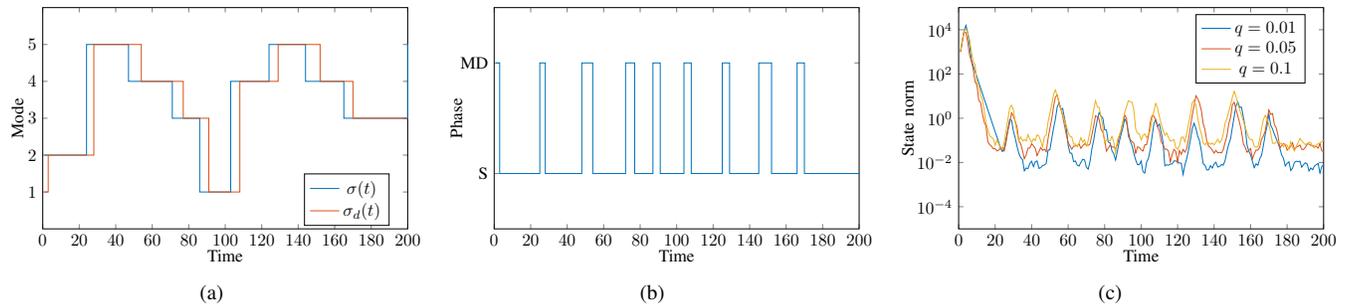}}
  \caption{Unknown switched linear system with $5$ states, $3$ inputs,
    and $ 5$ modes evolving under the data-driven online switched
    feedback controller in Algorithm~\ref{alg:controller_1}.
    Simulations correspond to the case with noisy data (with
    measurement noise bounded by $q=0.01$) and the controller knowing
    when the system switches modes. Plot (a) shows the switching
    signals of the system ($\sigma$) and of the controller
    ($\sigma_d$). In (b) we plot the active (mode detection or
    stabilization) phase of the controller across time. Lastly, (c)
    shows the semi-logarithmic plot of the state norm as a function of
    time. {\color{black} Lastly, (c) shows additional results for
      experiments subject to noise with $q=0.05$ and $q=0.1$.}
  }\label{fig:example2}
\end{figure*}

\begin{figure*}[ht!]
  \tikzset{every picture/.style={scale=0.65}}
  \centering
  \subfigure[]{
%
%
\definecolor{mycolor1}{rgb}{0.00000,0.44700,0.74100}%
\definecolor{mycolor2}{rgb}{0.85000,0.32500,0.09800}%
\begin{tikzpicture}

\begin{axis}[%
width=4.521in,
height=3.566in/1.3,
at={(0.758in,0.481in)},
scale only axis,
xmin=0,
xmax=200,
ymin=0,
ymax=6,
ytick ={1,2,3,4,5},
xlabel={Time},
ylabel={Mode},
xlabel near ticks,
ylabel near ticks,
axis background/.style={fill=white},
legend style={at={(axis cs:190,1.5)},anchor=north east}
]

\addplot[const plot, color=mycolor1] table[row sep=crcr] {%
1	2\\
2	2\\
3	2\\
4	2\\
5	2\\
6	2\\
7	2\\
8	2\\
9	2\\
10	2\\
11	2\\
12	2\\
13	2\\
14	2\\
15	2\\
16	2\\
17	2\\
18	2\\
19	2\\
20	2\\
21	2\\
22	2\\
23	2\\
24	5\\
25	5\\
26	5\\
27	5\\
28	5\\
29	5\\
30	5\\
31	5\\
32	5\\
33	5\\
34	5\\
35	5\\
36	5\\
37	5\\
38	5\\
39	5\\
40	5\\
41	5\\
42	5\\
43	5\\
44	5\\
45	5\\
46	5\\
47	4\\
48	4\\
49	4\\
50	4\\
51	4\\
52	4\\
53	4\\
54	4\\
55	4\\
56	4\\
57	4\\
58	4\\
59	4\\
60	4\\
61	4\\
62	4\\
63	4\\
64	4\\
65	4\\
66	4\\
67	4\\
68	4\\
69	4\\
70	4\\
71	3\\
72	3\\
73	3\\
74	3\\
75	3\\
76	3\\
77	3\\
78	3\\
79	3\\
80	3\\
81	3\\
82	3\\
83	3\\
84	3\\
85	3\\
86	1\\
87	1\\
88	1\\
89	1\\
90	1\\
91	1\\
92	1\\
93	1\\
94	1\\
95	1\\
96	1\\
97	1\\
98	1\\
99	1\\
100	1\\
101	1\\
102	1\\
103	4\\
104	4\\
105	4\\
106	4\\
107	4\\
108	4\\
109	4\\
110	4\\
111	4\\
112	4\\
113	4\\
114	4\\
115	4\\
116	4\\
117	4\\
118	4\\
119	4\\
120	4\\
121	4\\
122	4\\
123	4\\
124	5\\
125	5\\
126	5\\
127	5\\
128	5\\
129	5\\
130	5\\
131	5\\
132	5\\
133	5\\
134	5\\
135	5\\
136	5\\
137	5\\
138	5\\
139	5\\
140	5\\
141	5\\
142	5\\
143	5\\
144	4\\
145	4\\
146	4\\
147	4\\
148	4\\
149	4\\
150	4\\
151	4\\
152	4\\
153	4\\
154	4\\
155	4\\
156	4\\
157	4\\
158	4\\
159	4\\
160	4\\
161	4\\
162	4\\
163	4\\
164	4\\
165	3\\
166	3\\
167	3\\
168	3\\
169	3\\
170	3\\
171	3\\
172	3\\
173	3\\
174	3\\
175	3\\
176	3\\
177	3\\
178	3\\
179	3\\
180	3\\
181	3\\
182	3\\
183	3\\
184	3\\
185	3\\
186	3\\
187	3\\
188	3\\
189	3\\
190	3\\
191	3\\
192	3\\
193	3\\
194	3\\
195	3\\
196	3\\
197	3\\
198	3\\
199	3\\
200	5\\
201	5\\
};
\addlegendentry{$\sigma(t)$}

\addplot[const plot, color=mycolor2] table[row sep=crcr] {%
1	1\\
2	1\\
3	2\\
4	2\\
5	2\\
6	2\\
7	2\\
8	2\\
9	2\\
10	2\\
11	2\\
12	2\\
13	2\\
14	2\\
15	2\\
16	2\\
17	2\\
18	2\\
19	2\\
20	2\\
21	2\\
22	2\\
23	2\\
24	2\\
25	2\\
26	2\\
27	2\\
28	2\\
29	5\\
30	5\\
31	5\\
32	5\\
33	5\\
34	5\\
35	5\\
36	5\\
37	5\\
38	5\\
39	5\\
40	5\\
41	5\\
42	5\\
43	5\\
44	5\\
45	5\\
46	5\\
47	5\\
48	5\\
49	5\\
50	5\\
51	5\\
52	5\\
53	5\\
54	4\\
55	4\\
56	4\\
57	4\\
58	4\\
59	4\\
60	4\\
61	4\\
62	4\\
63	4\\
64	4\\
65	4\\
66	4\\
67	4\\
68	4\\
69	4\\
70	4\\
71	4\\
72	4\\
73	4\\
74	4\\
75	4\\
76	4\\
77	4\\
78	4\\
79	3\\
80	3\\
81	3\\
82	3\\
83	3\\
84	3\\
85	3\\
86	3\\
87	3\\
88	3\\
89	1\\
90	1\\
91	1\\
92	1\\
93	1\\
94	1\\
95	1\\
96	1\\
97	1\\
98	1\\
99	1\\
100	1\\
101	1\\
102	1\\
103	1\\
104	1\\
105	1\\
106	1\\
107	1\\
108	1\\
109	1\\
110	4\\
111	4\\
112	4\\
113	4\\
114	4\\
115	4\\
116	4\\
117	4\\
118	4\\
119	4\\
120	4\\
121	4\\
122	4\\
123	4\\
124	4\\
125	4\\
126	4\\
127	4\\
128	4\\
129	4\\
130	4\\
131	4\\
132	5\\
133	5\\
134	5\\
135	5\\
136	5\\
137	5\\
138	5\\
139	5\\
140	5\\
141	5\\
142	5\\
143	5\\
144	5\\
145	5\\
146	5\\
147	5\\
148	5\\
149	5\\
150	5\\
151	5\\
152	5\\
153	5\\
154	5\\
155	5\\
156	4\\
157	4\\
158	4\\
159	4\\
160	4\\
161	4\\
162	4\\
163	4\\
164	4\\
165	4\\
166	4\\
167	4\\
168	4\\
169	4\\
170	4\\
171	3\\
172	3\\
173	3\\
174	3\\
175	3\\
176	3\\
177	3\\
178	3\\
179	3\\
180	3\\
181	3\\
182	3\\
183	3\\
184	3\\
185	3\\
186	3\\
187	3\\
188	3\\
189	3\\
190	3\\
191	3\\
192	3\\
193	3\\
194	3\\
195	3\\
196	3\\
197	3\\
198	3\\
199	3\\
200	3\\
};
\addlegendentry{$\sigma_d(t)$}

\end{axis}
\end{tikzpicture}
  \subfigure[]{
%
%
\definecolor{mycolor1}{rgb}{0.00000,0.44700,0.74100}%
\begin{tikzpicture}

\begin{axis}[%
width=4.521in,
height=3.566in/1.3,
at={(0.758in,0.481in)},
scale only axis,
xmin=0,
xmax=200,
ymin=-0.5,
ymax=1.5,
ytick={0,1},
yticklabels={S,MD},
xlabel={Time},
ylabel={Phase},
xlabel near ticks,
ylabel near ticks,
axis background/.style={fill=white},
legend style={legend cell align=left, align=left, draw=white!15!black}
]
\addplot[const plot, color=mycolor1] table[row sep=crcr] {%
1	1\\
2	1\\
3	0\\
4	0\\
5	0\\
6	0\\
7	0\\
8	0\\
9	0\\
10	0\\
11	0\\
12	0\\
13	0\\
14	0\\
15	0\\
16	0\\
17	0\\
18	0\\
19	0\\
20	0\\
21	0\\
22	0\\
23	0\\
24	0\\
25	1\\
26	1\\
27	1\\
28	1\\
29	0\\
30	0\\
31	0\\
32	0\\
33	0\\
34	0\\
35	0\\
36	0\\
37	0\\
38	0\\
39	0\\
40	0\\
41	0\\
42	0\\
43	0\\
44	0\\
45	0\\
46	0\\
47	0\\
48	0\\
49	0\\
50	0\\
51	0\\
52	1\\
53	1\\
54	0\\
55	0\\
56	0\\
57	0\\
58	0\\
59	0\\
60	0\\
61	0\\
62	0\\
63	0\\
64	0\\
65	0\\
66	0\\
67	0\\
68	0\\
69	0\\
70	0\\
71	0\\
72	0\\
73	0\\
74	1\\
75	1\\
76	1\\
77	1\\
78	1\\
79	0\\
80	0\\
81	0\\
82	0\\
83	0\\
84	0\\
85	0\\
86	0\\
87	1\\
88	1\\
89	0\\
90	0\\
91	0\\
92	0\\
93	0\\
94	0\\
95	0\\
96	0\\
97	0\\
98	0\\
99	0\\
100	0\\
101	0\\
102	0\\
103	0\\
104	0\\
105	0\\
106	0\\
107	1\\
108	1\\
109	1\\
110	0\\
111	0\\
112	0\\
113	0\\
114	0\\
115	0\\
116	0\\
117	0\\
118	0\\
119	0\\
120	0\\
121	0\\
122	0\\
123	0\\
124	0\\
125	0\\
126	0\\
127	0\\
128	0\\
129	1\\
130	1\\
131	1\\
132	0\\
133	0\\
134	0\\
135	0\\
136	0\\
137	0\\
138	0\\
139	0\\
140	0\\
141	0\\
142	0\\
143	0\\
144	0\\
145	0\\
146	0\\
147	0\\
148	0\\
149	0\\
150	0\\
151	1\\
152	1\\
153	1\\
154	1\\
155	1\\
156	0\\
157	0\\
158	0\\
159	0\\
160	0\\
161	0\\
162	0\\
163	0\\
164	0\\
165	0\\
166	1\\
167	1\\
168	1\\
169	1\\
170	1\\
171	0\\
172	0\\
173	0\\
174	0\\
175	0\\
176	0\\
177	0\\
178	0\\
179	0\\
180	0\\
181	0\\
182	0\\
183	0\\
184	0\\
185	0\\
186	0\\
187	0\\
188	0\\
189	0\\
190	0\\
191	0\\
192	0\\
193	0\\
194	0\\
195	0\\
196	0\\
197	0\\
198	0\\
199	0\\
200	0\\
};
\end{axis}
\end{tikzpicture}
  \subfigure[]{
    \input{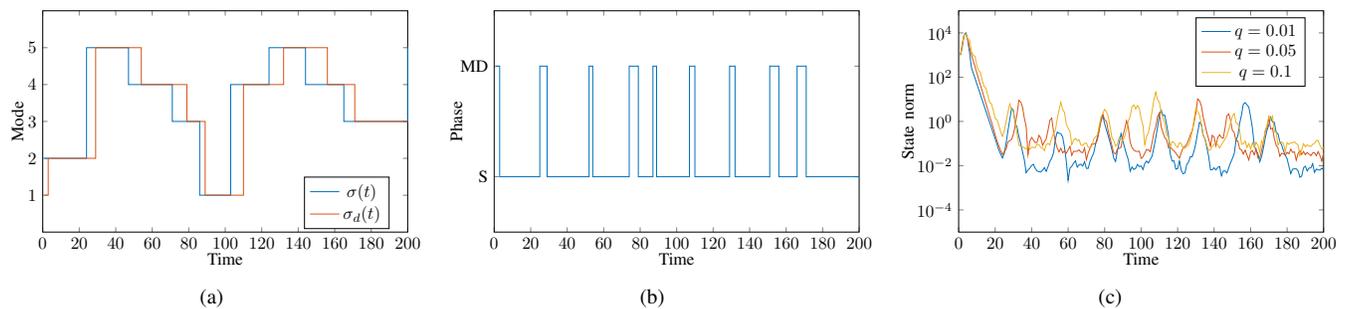}}
  \caption{Simulation results for the systems considered in
    Figure~\ref{fig:example2} with the same noisy data, but with a
    switching signal completely unknown to the controller, requiring the use of \eqref{eq:switch detection} to detect switches.
  }\label{fig:example3}
\end{figure*}

\section{Simulation results}\label{sec:simulation}

We illustrate the performance of the proposed data-driven switching
feedback controller through 4 simulation experiments of an unknown
switched linear system with $n=5$ states, $m=3$ inputs, and $p=5$
modes. The system matrices $\hat{A}_i, \hat{B}_i$ are randomly
  selected and given by
\footnotesize\begin{align*}
\hat{A}_1&\!=\!\begin{bmatrix}
	-0.73   \hspace{-1em}&-0.68   \hspace{-1em}&-0.03   \hspace{-1em}&-1.34   \hspace{-1em}&-0.20\\
	-0.26    \hspace{-1em}&\hphantom{-}0.27    \hspace{-1em}&\hphantom{-}0.18   \hspace{-1em}&-0.02    \hspace{-1em}&\hphantom{-}0.79\\
	-0.00   \hspace{-1em}&-0.15   \hspace{-1em}&-0.09   \hspace{-1em}&-0.13    \hspace{-1em}&\hphantom{-}0.46\\
	\hphantom{-}0.55    \hspace{-1em}&\hphantom{-}0.01   \hspace{-1em}& \hphantom{-}0.47    \hspace{-1em}&\hphantom{-}0.85    \hspace{-1em}&\hphantom{-}0.42\\
	-0.24    \hspace{-1em}&\hphantom{-}0.38   \hspace{-1em}&-0.17    \hspace{-1em}&\hphantom{-}0.81    \hspace{-1em}&\hphantom{-}0.05
\end{bmatrix} \hspace{-1em}&\hspace{-1em}
\hat{B}_1&\!=\!\begin{bmatrix}
	-0.38    \hspace{-1em}&\hphantom{-}0.56    \hspace{-1em}&\hphantom{-}0.70\\
	-0.36   \hspace{-1em}&-0.91   \hspace{-1em}&-0.81\\
	\hphantom{-}0.42   \hspace{-1em}&-1.15    \hspace{-1em}&\hphantom{-}0.57\\
	\hphantom{-}0.54   \hspace{-1em}&-0.52    \hspace{-1em}&\hphantom{-}1.69\\
	\hphantom{-}0.38   \hspace{-1em}&-0.80   \hspace{-1em}&-0.88
\end{bmatrix} \hspace{-1em} \\
\hat{A}_2&\!=\!\begin{bmatrix}
	\hphantom{-}0.26   \hspace{-1em}&-0.03    \hspace{-1em}&\hphantom{-}0.67    \hspace{-1em}&\hphantom{-}0.77   \hspace{-1em}&-0.00\\
	-0.02    \hspace{-1em}&\hphantom{-}0.46    \hspace{-1em}&\hphantom{-}0.10    \hspace{-1em}&\hphantom{-}0.48    \hspace{-1em}&\hphantom{-}0.54\\
	-0.15   \hspace{-1em}&-0.07   \hspace{-1em}&-0.17    \hspace{-1em}&\hphantom{-}0.51   \hspace{-1em}&-0.15\\
	\hphantom{-}0.43   \hspace{-1em}&-0.37   \hspace{-1em}&-1.01   \hspace{-1em}&-0.36    \hspace{-1em}&\hphantom{-}0.32\\
	\hphantom{-}0.00    \hspace{-1em}&\hphantom{-}0.21    \hspace{-1em}&\hphantom{-}0.90    \hspace{-1em}&\hphantom{-}0.11    \hspace{-1em}&\hphantom{-}0.12
\end{bmatrix} \hspace{-1em}&\hspace{-1em}
\hat{B}_2&\!=\!\begin{bmatrix}
	\hphantom{-}1.59    \hspace{-1em}&\hphantom{-}1.26   \hspace{-1em}&-1.04\\
	\hphantom{-}0.73    \hspace{-1em}&\hphantom{-}1.62   \hspace{-1em}&-0.42\\
	\hphantom{-}0.96   \hspace{-1em}&-0.54    \hspace{-1em}&\hphantom{-}0.73\\
	\hphantom{-}0.85    \hspace{-1em}&\hphantom{-}0.90    \hspace{-1em}&\hphantom{-}1.51\\
	-0.12   \hspace{-1em}&-0.78    \hspace{-1em}&\hphantom{-}0.00
\end{bmatrix} \hspace{-1em}\\
\hat{A}_3&\!=\!\begin{bmatrix}
	-0.25    \hspace{-1em}&\hphantom{-}0.52    \hspace{-1em}&\hphantom{-}0.39    \hspace{-1em}&1.15   \hspace{-1em}&-0.29\\
	\hphantom{-}0.29    \hspace{-1em}&\hphantom{-}0.28   \hspace{-1em}&-0.21    \hspace{-1em}&\hphantom{-}0.18    \hspace{-1em}&\hphantom{-}0.36\\
	-0.19   \hspace{-1em}&-0.40   \hspace{-1em}&-0.16    \hspace{-1em}&1.19   \hspace{-1em}&-0.08\\
	-0.03    \hspace{-1em}&\hphantom{-}0.72   \hspace{-1em}&-0.80    \hspace{-1em}&\hphantom{-}0.21   \hspace{-1em}&-0.66\\
	\hphantom{-}0.19    \hspace{-1em}&\hphantom{-}0.09   \hspace{-1em}&-0.08    \hspace{-1em}&\hphantom{-}0.00    \hspace{-1em}&\hphantom{-}0.22
\end{bmatrix} \hspace{-1em}&\hspace{-1em}
\hat{B}_3&\!=\!\begin{bmatrix}
	\hphantom{-}2.04   \hspace{-1em}&-0.29    \hspace{-1em}&\hphantom{-}0.10\\
	\hphantom{-}0.20   \hspace{-1em}&-2.00   \hspace{-1em}&-0.19\\
	-0.12   \hspace{-1em}&-2.62    \hspace{-1em}&\hphantom{-}0.50\\
	\hphantom{-}0.76    \hspace{-1em}&\hphantom{-}0.71    \hspace{-1em}&\hphantom{-}0.34\\
	-2.52    \hspace{-1em}&\hphantom{-}0.01    \hspace{-1em}&\hphantom{-}0.94
\end{bmatrix} \hspace{-1em}\\
\hat{A}_4&\!=\!\begin{bmatrix}
	\hphantom{-}0.18    \hspace{-1em}&\hphantom{-}0.22    \hspace{-1em}&\hphantom{-}0.16   \hspace{-1em}&-0.20    \hspace{-1em}&\hphantom{-}0.64\\
	-0.07    \hspace{-1em}&\hphantom{-}0.58    \hspace{-1em}&\hphantom{-}0.99   \hspace{-1em}&-0.12    \hspace{-1em}&\hphantom{-}0.66\\
	\hphantom{-}0.41    \hspace{-1em}&\hphantom{-}0.27    \hspace{-1em}&\hphantom{-}0.24    \hspace{-1em}&\hphantom{-}0.40   \hspace{-1em}&-0.36\\
	\hphantom{-}0.36   \hspace{-1em}&-0.33    \hspace{-1em}&\hphantom{-}0.80   \hspace{-1em}&-0.05    \hspace{-1em}&\hphantom{-}0.35\\
	\hphantom{-}0.04    \hspace{-1em}&\hphantom{-}0.09   \hspace{-1em}&-0.83   \hspace{-1em}&-0.21    \hspace{-1em}&\hphantom{-}0.04
\end{bmatrix}\hspace{-1em}& \hspace{-1em}
\hat{B}_4&\!=\!\begin{bmatrix}
	\hphantom{-}0.47    \hspace{-1em}&\hphantom{-}0.95   \hspace{-1em}&-0.41\\
	-1.07    \hspace{-1em}&\hphantom{-}0.37    \hspace{-1em}&\hphantom{-}0.53\\
	-0.50    \hspace{-1em}&\hphantom{-}1.14    \hspace{-1em}&\hphantom{-}1.77\\
	-0.61    \hspace{-1em}&\hphantom{-}0.12   \hspace{-1em}&-1.93\\
	-0.99    \hspace{-1em}&\hphantom{-}1.88    \hspace{-1em}&\hphantom{-}2.02
\end{bmatrix} \hspace{-1em}\\
\hat{A}_5&\!=\!\begin{bmatrix}
	\hphantom{-}0.27    \hspace{-1em}&\hphantom{-}0.09    \hspace{-1em}&\hphantom{-}0.87    \hspace{-1em}&\hphantom{-}0.28   \hspace{-1em}&-0.10\\
	-0.54    \hspace{-1em}&\hphantom{-}0.44    \hspace{-1em}&\hphantom{-}0.25    \hspace{-1em}&\hphantom{-}0.35    \hspace{-1em}&\hphantom{-}0.04\\
	\hphantom{-}0.53   \hspace{-1em}&-0.37    \hspace{-1em}&\hphantom{-}0.00    \hspace{-1em}&\hphantom{-}0.36   \hspace{-1em}&-0.49\\
	\hphantom{-}0.16    \hspace{-1em}&\hphantom{-}0.20    \hspace{-1em}&\hphantom{-}0.00    \hspace{-1em}&\hphantom{-}0.67    \hspace{-1em}&\hphantom{-}0.26\\
	-0.10   \hspace{-1em}&-0.42   \hspace{-1em}&-0.08   \hspace{-1em}&-0.48   \hspace{-1em}&-0.14
\end{bmatrix}&\hspace{-1em}
\hat{B}_5&\!=\!\begin{bmatrix}
	-0.86    \hspace{-1em}&\hphantom{-}0.58   \hspace{-1em}&-1.38\\
	\hphantom{-}1.52   \hspace{-1em}&-0.54   \hspace{-1em}&-2.46\\
	-0.74   \hspace{-1em}&-1.00    \hspace{-1em}&\hphantom{-}0.16\\
	-2.31    \hspace{-1em}&\hphantom{-}1.47   \hspace{-1em}&-0.55\\
	\hphantom{-}0.44   \hspace{-1em}&-1.91    \hspace{-1em}&\hphantom{-}0.16
\end{bmatrix}
\end{align*}\normalsize

For all simulations, we select the same switching signal $\sigma$, shown as
the blue curve in Figure~\ref{fig:example}(a) through
Figure~\ref{fig:example3}(a). On average, this signal has 1 switch per 20 
units of time. Each of the 4 simulations has as initial condition
$x(0)=\begin{bmatrix} 1000 &0&0&0&0
\end{bmatrix}^\top$.

In the first two simulations, a noiseless data pair $(U^i_-,X^i)$ with
$T^i=7$ is collected in the initialization step for each mode
$i \in \calP = \{1,\dots,5\}$. Note that $n+m=8$, and hence it is 
impossible to uniquely determine the dynamics of the modes from these 7
measurements. We set $\lambda=\sqrt{0.8}$ as the desired decay rate for each mode
and note that both Assumptions~\ref{ass:ass_on_data-I}
and~\ref{ass:ass_on_data-II} are satisfied on the initial
data. Moreover, with $c=0.1$, it can be computed that 
$\mu=1.12$,
(see \eqref{def:mu}), and the LMI~\eqref{LMI_for_MD} holds with
$\lambda_u=2.42$. 
We apply our proposed data-driven controller: in the
first simulation, shown in Figure~\ref{fig:example}, the controller is
aware of when the systems switches and in the second simulation,
shown in Figure~\ref{fig:example1}, the switching signal is unknown. In
both cases, we run the mode detection algorithm for noiseless data,
that is Algorithm~\ref{alg:mode_detection}.

Comparing Figure~\ref{fig:example}(a) and
Figure~\ref{fig:example1}(a), we observe that the switching signals
$\sigma_d$ of the controller generated in the two different scenarios
are almost the same, meaning that the condition \eqref{what_to_obey}
with $q=0$ is effective in detecting switches. When $\sigma_d(t)$ is
not equal to $\sigma(t)$, the controller is operating in the mode
detection phase, as seen in Figure~\ref{fig:example}(b) and
Figure~\ref{fig:example1}(b). One can see that for both simulations,
$\sigma_d$ switches at the same rate as $\sigma$; in particular, when
$\sigma$ is unknown in the second simulation, switches are always
immediately detected, implying that the controller changes to the mode
detection phase whenever $\sigma$ changes value. Moreover, each
instance of the mode detection phase takes 2 units of time.

This also implies that Assumption~\ref{ass:detection_after_exercution}
holds with parameters $\tau=20$ and $\eta=0.1$.  As a result,
\begin{equation*}
	    \left(1-\frac{\ln\lambda_u}{\ln\lambda}\right)\eta +
	\left(1-\frac{\ln\mu}{\ln\lambda}\right)\frac{1}{\tau}=0.9942<1.
\end{equation*}
Therefore, the condition~\eqref{ADT_AAT_condition} is met and hence,
by Theorem~\ref{thm:stability_control}, the closed-loop
system~\eqref{def:sys} satisfies~\eqref{ISS-estimate}. Since the data
is noiseless ($q=0$), the system is actually asymptotically
stable. This is indeed reflected by the plots of the norms of the
trajectories, as shown in Figures~\ref{fig:example}(c)
and~\ref{fig:example1}(c).

In the third and fourth simulations, we assume the measurements are
corrupted with noise of magnitude $q=0.01$. During the initialization
step, a noisy data pair $(U^i_-,X^i)$ with $T^i=9$ is collected for
each mode.
The desired decay rate is again $\lambda=\sqrt{0.8}$ and it can be verified
that both Assumptions~\ref{ass:ass_on_data-I}
and~\ref{ass:ass_on_data-II} are satisfied on the initial
data. Moreover, with $c=1$, it can be computed that 
$\mu=1.46$, 
and the LMI~\eqref{LMI_for_MD} holds with
$\lambda_u=16.2$.
We remark that a value of $c$ larger than in the
noiseless simulations is needed in order to distinguish the inputs
from the noise. Similar to the previous two examples, we study the
performance of the data-driven controller in the scenarios with
knowledge of when the system mode switches,
shown in Figure~\ref{fig:example2}, and when the switching signal is
unknown, shown in Figure~\ref{fig:example3}. As suggested by
Figures~\ref{fig:example2}(b) and~\ref{fig:example3}(b), the presence
of noise increases the duration of the mode detection phase. In both
simulations, we obtain $\eta= 0.22$.

Comparing Figures~\ref{fig:example3}(a) and~\ref{fig:example3}(b), one
can see that the delay between $\sigma$ and $\sigma_d$ is not equal to
the length of the mode detection phase. This difference precisely
corresponds to the time required to detect the switch (note that
instantaneous convergence of the states is still guaranteed by
\eqref{what_to_obey}). In this case, we have
\begin{equation*}
  \left(1-\frac{\ln\lambda_u}{\ln\lambda}\right)\eta +
  \left(1-\frac{\ln\mu}{\ln\lambda}\right)\frac{1}{\tau}= 5.94>1,
\end{equation*}
i.e., the condition~\eqref{ADT_AAT_condition} is not met.
Nevertheless, the behavior displayed by the trajectories in
Figures~\ref{fig:example2}(c) and~\ref{fig:example3}(c), with
oscillations (that are bounded) in the stabilization phase due to the
noise, still suggests practical exponential stability. {\color{black}In
  addition, we also plot the norms of the state resulting from the
  same initial condition and switching signal, but with noise bounds
  $q=0.05$ and $0.1$. One can roughly see that the the oscillation
  becomes larger as $q$ increases.}

\section{Conclusions}\label{sec:conclusion}
We have considered the problem of stabilizing an unknown switched
linear system on the basis of measured data. Prior to the online
operation of the switched system, we have access to measurements of
each of the individual modes. We have derived conditions in terms of
linear matrix inequalities under which this pre-collected data is
informative enough for finding a uniformly stabilizing controller for
each mode. Once the system is running, we have access to online
measurements of the currently active mode. Our online switched
controller design alternates between a phase that performs mode
detection on the basis of the online measurements and a stabilization
phase that exploits this identification. Under average dwell- and
activation-time assumptions on the switching signal, the proposed
controller guarantees a {\color{black}practical} stability property of the
closed-loop switched system. Our technical exposition has dealt with
both noiseless and noisy measurements, and the cases when the
controller knows the switching times of the system or has complete
lack of knowledge about them. Future research will investigate
  methods of input design in the presence of noise in order to
  accelerate the mode detection phase. Another open problem is to
  incorporate the online measurements to either speed up the rate of
  convergence or shrink the uncertainty regarding the dynamics of the
  modes. Lastly, employing clustering methods, or otherwise exploiting
  the online measurements, might enable the detection of switches even
  during the mode detection phase.

\bibliography{../bib/alias,../bib/JC,../bib/Main,../bib/Main-add,../bib/New,../bib/FB}
\bibliographystyle{IEEEtran}

\appendix

\section*{Proof of Theorem~\ref{thm:stability_control}}
The proof of Theorem~\ref{thm:stability_control} consists of two
  steps. Firstly, we construct two discrete timers to encode the ADT
  and AAT conditions. We then use a similar approach as
  in~\cite{GY-DL:15,JIP-ART:17} to show that a discrete-time system
  whose state consists of the state of the closed-loop system, the
  switching signal and the two timers is practically exponentially
  stable, which further implies that the closed-loop system itself has
  the same stability property.


\begin{lemma}[Discrete timers for ADT and AAT
  conditions]\label{lem:timers}
  Assume the system does not switch while the controller is in the
  mode detection phase.  Then,
  Assumption~\ref{ass:detection_after_exercution}\ref{itm:asump adt}
  implies that there exists a timer $\tau_d:\N\mapsto[0,N_0]$ such
  that
  \begin{subequations}\label{def:tau_d}
    \begin{align}
      \tau_d(t+1)&
                   =\tau_d(t)+\frac{1}{\tau}-1
      &&\mbox{ if } t\in
         \bT^m,\label{def:tau_d_jump}
      \\ 
      \tau_d(t+1)&\in\big[\tau_d(t),\tau_d(t)+\frac{1}{\tau}\big]
      &&\mbox{ otherwise}.\label{def:tau_d_flow}
    \end{align}
  \end{subequations}
  Similarly
  Assumption~\ref{ass:detection_after_exercution}\ref{itm:asump aat}
  implies that there is a timer $\tau_a:\N\mapsto[0,N_0]$ such that
  \begin{subequations}\label{def:tau_a}
    \begin{align}
      \tau_a(t+1)&=\tau_a(t)+\eta-1&&\mbox{ if
                                      }t\in\{t_i^m,\cdots,t_i^s-1\},i\in\N,\label{def:tau_a_unstable}
      \\ 
      \tau_a(t+1)&\in\big[\tau_a(t),\tau_a(t)+\eta]&&\mbox{
                                                      otherwise}.\label{def:tau_a_stable} 
    \end{align}
  \end{subequations}
\end{lemma}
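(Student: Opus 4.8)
The plan is to exhibit both timers in closed form as pointwise minima over ``reverse windows,'' mimicking the standard average dwell-/activation-time timer constructions, and then to read off the range and the recurrences by elementary manipulations. Explicitly, I would set, with the conventions $N(t,t)=M(t,t)=0$,
\[
  \tau_d(t):=\min_{0\le s\le t}\Big[\,N_0+\frac{t-s}{\tau}-N(s,t)\,\Big],
  \qquad
  \tau_a(t):=\min_{0\le s\le t}\Big[\,T_0+\eta\,(t-s)-M(s,t)\,\Big].
\]
Each minimum is over the finite set $\{0,\dots,t\}$, so it is attained and both formulas define functions $\N\to\R$ with $\tau_d(0)=N_0$ and $\tau_a(0)=T_0$. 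Only the counting bounds in Assumption~\ref{ass:detection_after_exercution}\ref{itm:asump adt}--\ref{itm:asump aat} and the built-in alternation $t_1^m<t_1^s<t_2^m<\cdots$ enter the argument; hypothesis~\ref{itm:asump 1} is used only to make the phase schedule (hence $\bT^m$, $\bT^s$ and $\mathbf 1(\cdot)$) well posed. I also assume $\tau\ge 1$ (and $\eta\le1$, which is part of the hypothesis): in discrete time this is the only non-degenerate case, since at most one mode-detection phase starts per time step.

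For the range: putting $s=t$ in either minimum gives the upper bounds $\tau_d(t)\le N_0$ and $\tau_a(t)\le T_0$. For nonnegativity, each $s<t$ term of the first minimum is $\ge 0$ precisely by the ADT estimate~\eqref{ADT_condition} applied to the interval $[s,t)$, and the $s=t$ term equals $N_0\ge 0$; likewise the $s<t$ terms of the second minimum are $\ge 0$ by the AAT estimate~\eqref{AAT_condition} applied to $[s,t)$, with $s=t$ contributing $T_0\ge 0$. Hence $\tau_d$ and $\tau_a$ take values in $[0,N_0]$ and $[0,T_0]$, respectively.

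The substance is the recurrences; I would carry out the computation for $\tau_d$, the one for $\tau_a$ being identical after replacing $1/\tau$ by $\eta$, $N_0$ by $T_0$, and the count $N(\cdot,\cdot)$ (which jumps by $1$ exactly when $t\in\bT^m$) by $M(\cdot,\cdot)$ (which jumps by $1$ exactly when $\mathbf 1(t)=1$). In the minimum defining $\tau_d(t+1)$, isolate the index $s=t+1$, which contributes $N_0$, and for $0\le s\le t$ substitute $N(s,t+1)=N(s,t)+\mathbf 1(t\in\bT^m)$ together with $\frac{t+1-s}{\tau}=\frac{t-s}{\tau}+\frac1\tau$; this yields
\[
  \tau_d(t+1)=\min\Big\{\,N_0,\ \tau_d(t)+\frac1\tau-\mathbf 1(t\in\bT^m)\,\Big\}.
\]
If $t\notin\bT^m$ the right-hand side is $\min\{N_0,\tau_d(t)+\tfrac1\tau\}$, which lies in $[\tau_d(t),\tau_d(t)+\tfrac1\tau]$ because $\tau_d(t)\le N_0$ and $\tfrac1\tau>0$; this is~\eqref{def:tau_d_flow}. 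If $t\in\bT^m$ then $\tfrac1\tau-1\le 0$, so $\tau_d(t)+\tfrac1\tau-1\le N_0$ and the minimum selects its second argument, giving the \emph{equality} $\tau_d(t+1)=\tau_d(t)+\tfrac1\tau-1$ of~\eqref{def:tau_d_jump}; its nonnegativity is already covered by the range statement. The same computation for $\tau_a$ gives $\tau_a(t+1)=\min\{T_0,\tau_a(t)+\eta-\mathbf 1(t)\}$, and since $\eta\le 1$ the ``jump'' branch is again exact, yielding~\eqref{def:tau_a_unstable}, while the ``flow'' branch yields~\eqref{def:tau_a_stable}.

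I expect the only genuinely nonroutine step to be \emph{guessing} these closed forms: the key realization is that a single reverse-window minimum simultaneously encodes the saturation at $N_0$ (through the index $s=t+1$) and the accumulated ``switching credit'' controlled by the ADT/AAT inequality (through the indices $s<t+1$), so that both the range and the recurrences come out without any additional clipping. Within the bookkeeping, the one place that requires care is that~\eqref{def:tau_d_jump} and~\eqref{def:tau_a_unstable} must be exact equalities rather than the inequalities ``$\le$''; this is exactly what the normalizations $\tau\ge 1$ and $\eta\le 1$ secure, by forcing the minimum to avoid the saturating branch at a jump.
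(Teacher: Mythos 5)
Your proposal is correct and is essentially the paper's own construction in disguise: since $N(0,t)-N(0,s)=N(s,t)$, your reverse-window minimum $\tau_d(t)=\min_{0\le s\le t}[N_0+\tfrac{t-s}{\tau}-N(s,t)]$ coincides exactly with the paper's $\tau_d(t)=N_0+n_d(t)-(N(0,t)-\tfrac{t}{\tau})$ with $n_d(t)=\min_{s\le t}\{N(0,s)-\tfrac{s}{\tau}\}$, and likewise for $\tau_a$. Your verification via the one-step identity $\tau_d(t+1)=\min\{N_0,\tau_d(t)+\tfrac1\tau-\mathbf 1(t\in\bT^m)\}$ is a slightly more streamlined packaging of the paper's case analysis (which also invokes $\tau\ge 1$ and $\eta\le 1$ at the jump steps), but the argument is the same.
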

\begin{proof}
  We study the ADT condition first. Let
  \begin{equation}
    n_d(t):=\min_{s=0,\ldots,t} \{\calN(0,s)-\frac{s}{\tau}\}.
  \end{equation}
  By definition $n_d(t)$ is non-increasing. We claim that
  \[\tau_d(t):=N_0+n_d(t)-(\calN(0,t)-\frac{t}{\tau})\]
  satisfies \eqref{def:tau_d}. Clearly, it follows from the definition
  of $n_d(t)$ that $\tau_d(t)\leq N_0$.  Also, for $s=0,\ldots,t$, we
  have
  \begin{multline*}
    N_0+(\calN(0,s)-\frac{s}{\tau})-(\calN(0,t)-\frac{t}{\tau})
    \\
    =\calN(0,s)+\big(\frac{t-s}{\tau}+N_0\big)-\calN(0,t)
    \\
    \geq \calN(0,s)+\calN(s,t)-\calN(0,t)=0.
  \end{multline*}
  Hence the range of $\tau_d$ is $[0,N_0]$.  To verify that
  \eqref{def:tau_d_jump} holds, let $t\in\bT^m$. In this case, we have
  $\calN(0,t+1)=\calN(0,t)+1$. Since $\tau\geq 1$,
  \[
    \calN(0,t+1)-\frac{t+1}{\tau}\geq \calN(0,t)-\frac{t}{\tau}\geq n_d(t).
  \]
  This implies that $n_d(t+1)=n_d(t)$, and hence 
  \begin{multline*}
    \tau_d(t+1)-\tau_d(t)
    \\
    =(n_d(t+1)-n_d(t))-(\calN(0,t+1)-\calN(0,t)-\frac{1}{\tau})=\frac{1}{\tau}-1,
  \end{multline*}
  concluding~\eqref{def:tau_d_jump}. We now consider the
  case where $t\not\in\bT^m$. Note that by definition
  $\calN(0,t+1)=\calN(0,t)$. Direct inspection yields
  \begin{equation}\label{adt_one_side}
    \tau_d(t+1)-\tau_d(t)=(n_d(t+1)-n_d(t))+\frac{1}{\tau}\leq\frac{1}{\tau}.
  \end{equation}
  Meanwhile, note that $n_d(t+1)<n_d(t)$ if and only if
  $\calN(0,t+1)-\frac{t+1}{\tau}< n_d(t)$, in which case
  \begin{multline*}
    n_d(t+1)-n_d(t)=\calN(0,t+1)-\frac{t+1}{\tau}- n_d(t)\\
    \geq
    (\calN(0,t+1)-\frac{t+1}{\tau})-(\calN(0,t)-\frac{t}{\tau})=-\frac{1}{\tau} ,
  \end{multline*}
  and consequently,
  \begin{equation}\label{adt_another_side}
    \tau_d(t+1)-\tau_d(t)\geq 0 .
  \end{equation}
  Therefore, \eqref{def:tau_d_flow} follows from~\eqref{adt_one_side}
  and~\eqref{adt_another_side}.
  
  To study the AAT condition, we define
  \begin{equation}
    n_a(t):=\min_{s=0,\ldots,t} \{ \calM(0,s)-\eta s \}.
  \end{equation}
  By definition $n_a(t)$ is non-increasing. We claim that
  \[
    \tau_a(t):=T_0+n_a(t)-(\calM(0,t)-\eta t)
  \]
  satisfies \eqref{def:tau_a}. Clearly it follows from the definition
  of $n_a(t)$ that $\tau_a(t)\leq T_0$. Moreover, for $s=0,\ldots,t$,
  \begin{multline*}
    T_0+(\calM(0,s)-\eta s)-(\calM(0,t)-\eta t)\\
    =\calM(0,s)+(\eta(t-s)+T_0)-\calM(0,t)\\
    \geq \calM(0,s)+\calM(s,t)-\calM(0,t)=0.
  \end{multline*}
  Hence the range of $\tau_a$ is indeed $[0,T_0]$. To verify
  \eqref{def:tau_a_unstable}, we first assume
  $t\in\{t_i^m,\cdots,t_i^s-1\}$ for some $i\in\N$. In this case we
  have $\calM(0,t+1)=\calM(0,t)+1$. Since $\eta\in[0,1]$, we have that
  $\calM(0,t+1)-\eta(t+1)\geq \calM(0,t)-\eta t$. In turn, this implies that
  $n_a(t+1)=n_a(t)$. Therefore we can conclude
  \begin{multline*}
    \tau_a(t+1)-\tau_a(t)
    \\
    =(n_a(t+1)-n_a(t))-(\calM(0,t+1))-\calM(0,t)-\eta)=\eta-1,
  \end{multline*}
  proving~\eqref{def:tau_a_unstable}. We move our
  attention to the case where $t\in\{t_i^s,\cdots,t_{i+1}^m-1\}$ for
  some $i\in\N$. We have $\calN(0,t+1)=\calN(0,t)$ and hence
  \begin{equation}\label{aat_one_side}
    \tau_a(t+1)-\tau_a(t)=(n_a(t+1)-n_a(t))+\eta\leq \eta.
  \end{equation}
  On the other hand, note that $n_a(t+1)<n_a(t)$ only if
  $\calM(0,t+1)-\eta(t+1)<n_a(t)$. Therefore, we can conclude
  \begin{multline*}
    n_a(t+1)-n_a(t)=\calM(0,t+1)-\eta(t+1)-n_a(t)\\
    \geq (\calM(0,t+1)-\eta(t+1))-(\calM(0,t)-\eta t)=-\eta.
  \end{multline*}
  In turn, this implies that
  \begin{equation}\label{aat_another_side}
    \tau_a(t+1)-\tau_a(t)\geq 0.
  \end{equation}
  We can now conclude \eqref{def:tau_a_stable} by combining
  \eqref{aat_one_side} and \eqref{aat_another_side}.
\end{proof}

We rely on Lemma~\ref{lem:timers} to prove 
Theorem~\ref{thm:stability_control} next.

\begin{proof}[Proof of Theorem~\ref{thm:stability_control}]
  Construct two timers $\tau_d:\N\mapsto[0,N_0]$,
  $\tau_a:\N\mapsto[0,T_0]$ as in Lemma~\ref{lem:timers}.  Denote the
  extended state at time~$t$,
  \begin{equation}
    \xi(t):=\begin{pmatrix}
      x(t)\\
      \sigma_d(t)\\
      \tau_d(t)\\
      \tau_a(t)
    \end{pmatrix}\in\R^n\times\calP\times[0,N_0]\times[0,T_0]=:\calX.
  \end{equation}
  Further define functions $U,V,W:\calX\mapsto \R_{\geq 0}$ by
  \begin{align*}
    U(\xi)&:={\Big(\frac{\mu}{\lambda}\Big)^{\tau_d}
            \Big(\frac{\lambda_u}{\lambda}\Big)^{\tau_a}},    
    \\
    V(\xi)&:=V_{\sigma_d}(x)=\norm{P_{\sigma_d}^{\frac{1}{2}}x},
    \\
    W(\xi)&:=U(\xi)V(\xi) .
  \end{align*}
  Note that since $\mu\geq 1$, $\lambda_u\geq 1$ and $0<\lambda<1$, we
  have that $U$ is increasing with respect to both $\tau_d$ and
  $\tau_a$, and $a\geq {\lambda}$, where $a$ is defined in
  \eqref{def:a}. In addition, since
  $\tau_d\in[0,N_0],\tau_a\in[0,T_0]$, we see that
  \[
    U(\xi)\in\left[1,{
        \Big(\frac{\mu}{\lambda}\Big)^{N_0}\Big(\frac{\lambda_u}{\lambda}\Big)^{T_0}}\right]
    = [1,b],
  \]
  where $b$ is defined in \eqref{def:b}.  It follows from
  \eqref{ADT_AAT_condition} that
  \begin{align*}
    \ln
    a &= \left(\ln\lambda+(\ln\mu-\ln\lambda)\frac{1}{\tau}+(\ln\lambda_u-\ln\lambda)\eta\right) 
    \\
      &= {\ln\lambda}
        \left(1-\big(1-\frac{\ln\mu}{\ln\lambda}\big)\frac{1}{\tau}-\big(1-\frac{\ln\lambda_u}{\ln\lambda}\big)\eta\right)<0.  
  \end{align*}
  Therefore we have that $a<1$. We now investigate the one-step change
  of the function $t \mapsto W(\xi(t))$ in three separate cases.

  1) The case $t\in \bT^m$; that is, a switch is detected at
  time~$t$. In this case, the controller has switched to the mode
  detection phase.  We apply the Schur complement on
  \eqref{LMI_for_MD} to deduce that
  \begin{equation*}
    \begin{bmatrix}
      \lambda_u^2 P_i&0\\0&0
    \end{bmatrix}-\begin{bmatrix}
      \hat{A}_i&\hat{B}_i
    \end{bmatrix}^\top P_i\begin{bmatrix}
      \hat{A}_i&\hat{B}_i
    \end{bmatrix}-k_i\begin{bmatrix}
      c^2I_n&0\\0&-I_n
    \end{bmatrix}\succeq 0.
  \end{equation*}
  Since $k_i\geq 0$, for any $x\in\R^n,u\in\R^m$ such that
  \begin{equation}\label{S_procedure_condition}
    \begin{bmatrix}
      x\\u
    \end{bmatrix}^\top\begin{bmatrix}
      c^2I_n&0\\0&-I_n
    \end{bmatrix}\begin{bmatrix}
      x\\u
    \end{bmatrix}\succeq 0,
  \end{equation}
  it must be that 
  \begin{equation}\label{S_procedure_result}
    \begin{bmatrix}
      x\\u
    \end{bmatrix}^\top\left(\begin{bmatrix}
        \lambda_u^2 P_i&0\\0&0
      \end{bmatrix}-\begin{bmatrix}
	\hat{A}_i&\hat{B}_i
      \end{bmatrix}^\top P_i\begin{bmatrix}
        \hat{A}_i&\hat{B}_i
      \end{bmatrix}\right)\begin{bmatrix}
      x\\u
    \end{bmatrix}\succeq 0.
  \end{equation}  
  Notice that \eqref{S_procedure_condition} is equivalent to
  $\norm{u}\leq c\norm{x}$, which is the condition on the control used
  during the mode detection phase. On the other hand,
  \eqref{S_procedure_result} is equivalent to
  \begin{equation}\label{decreasing_V}
    (\hat{A}_ix+\hat{B}_i u)^\top P_i(\hat{A}_ix+\hat{B}_iu)\leq \lambda_u^2x^\top P_i x.
  \end{equation}
  Denote $\sigma_d(t)=j,\sigma_d(t+1)=k$, which are different because
  of the phase switch at $t\in\bT^m$.  It follows from the triangle
  inequality that
  \begin{align*}
    V_{\sigma_d(t+1)}
    &(x(t+1))=V_k(x(t+1))
    \\
    &=\norm{P_k^{\frac{1}{2}}(\hat{A}_kx(t)+\hat{B}_ku(t)+w(t))}
    \\
    &\leq {\lambda_u}\norm{P_k^{\frac{1}{2}}x(t)}+p_{\max}q
    \\
    &\leq{\lambda_u} \Vert
      P_k^{\frac{1}{2}}P_j^{-\frac{1}{2}}\Vert\norm{P_j^{\frac{1}{2}}x(t)}+p_{\max}q
    \\
    &\leq{\lambda_u\mu} V_{\sigma(t)}(x(t))+p_{\max}q.
  \end{align*}
  This can be equivalently written as
  $ V(\xi(t+1))\leq {\lambda_u\mu}V(\xi(t))+p_{\max}q$.  On the
  other hand, it follows from \eqref{def:tau_d_jump} and
  \eqref{def:tau_a_unstable} that
  \begin{multline*}
    U(\xi(t+1)) =
    {\Big(\frac{\mu}{\lambda}\Big)^{\tau_d(t)+\frac{1}{\tau}-1}
      \Big(\frac{\lambda_u}{\lambda}\Big)^{\tau_a(t)+\eta-1}}
    \\
    ={\Big(\frac{\mu}{\lambda}\Big)^{\frac{1}{\tau}-1}
      \Big(\frac{\lambda_u}{\lambda}\Big)^{\eta-1}}U(\xi(t))={\frac{\lambda
      }{\lambda_u\mu}}aU(\xi(t)).
  \end{multline*}
  Therefore,
  \begin{align*}
    W(\xi(t+1))
    &=U(\xi(t+1))V(\xi(t+1))
    \\
    &
      \leq{\lambda }aU(\xi(t))V(\xi(t))+{\frac{\lambda }{
      \lambda_u\mu}}U(\xi(t))ap_{\max}q
    \\
    &\leq {\lambda }
      aW(\xi(t))+{\frac{\lambda }{
      \lambda_u\mu}}abp_{\max}q,
  \end{align*}
  and we conclude
  \begin{equation}\label{uniform_one_step_inequality_W}
    W(\xi(t+1))\leq aW(\xi(t))+\tilde bq,
  \end{equation}    
  where $\tilde b:=\frac{ab}{{\lambda}}p_{\max}$.
  
  2) The case where $t\in\{t_i^m+1,\cdots,t_i^s-1\}$ for some
  $i\in\N$. Since the controller is in the mode detection phase and
  $\sigma_d$ does not switch at $t+1$, we can similarly conclude from
  \eqref{decreasing_V} as in the previous case that
  \begin{equation*}
    V(\xi(t+1))\leq {\lambda_u}V(\xi(t))+p_{\max}q.
  \end{equation*}
  Moreover, from \eqref{def:tau_d_flow} and \eqref{def:tau_a_unstable}
  we can conclude that
  \begin{align*}
    U(\xi(t+1))
    &\leq{\Big(\frac{\mu}{\lambda}\Big)^{\tau_d(t)+\frac{1}{\tau}}
      \Big(\frac{\lambda_u}{\lambda}\Big)^{\tau_a(t)+\eta-1}}
    \\
    &={\Big(\frac{\mu}{\lambda}\Big)^{\frac{1}{\tau}}
      \Big(\frac{\lambda_u}{\lambda}\Big)^{\eta-1}}U(\xi(t)) =
      \frac{a}{{\lambda_u}}U(\xi(t)).    
  \end{align*}
  Therefore
  \begin{align*}
    W(\xi(t+1))
    &=U(\xi(t+1))V(\xi(t+1))
    \\
    &\leq  a
      U(\xi(t))V(\xi(t))+\frac{a}{{\lambda_u}}U(\xi(t))p_{\max}q
    \\
    &\leq a W(\xi(t))+\frac{ab }{{\lambda_u}}p_{\max}q ,
  \end{align*}
  and again we conclude that \eqref{uniform_one_step_inequality_W}
  holds.

  3) The case where $t\in \{t_i^s,\cdots,t_{i+1}^m-1\}$ for some
  $i\in\N$; that is, when the controller is in the stabilization
  phase. In this case, it follows from \eqref{what_to_obey} that
  \begin{equation*}
    V(\xi(t+1))\leq {\lambda} V(\xi(t))+p_{\max}q.
  \end{equation*}
  Meanwhile, we can conclude from \eqref{def:tau_d_flow} and
  \eqref{def:tau_a_stable} that
  \begin{align*}
    U(\xi(t+1))
    &\leq
      {\Big(\frac{\mu}{\lambda}\Big)^{\tau_d(t)+\frac{1}{\tau}}
      \Big(\frac{\lambda_u}{\lambda}\Big)^{\tau_a(t)+\eta}} 
    \\
    &={\Big(\frac{\mu}{\lambda}\Big)^{\frac{1}{\tau}}
      \Big(\frac{\lambda_u}{\lambda}\Big)^{\eta}}U(\xi(t)) =
      \frac{a}{{\lambda}}U(\xi(t)).  
  \end{align*} 
  Therefore
  \begin{align*}
    W(\xi(t+1))
    &=U(\xi(t+1))V(\xi(t+1))
    \\
    &\leq a U(\xi(t))V(\xi(t))+\frac{a}{
      \lambda}U(\xi(t))p_{\max}q
    \\
    &\leq a W(\xi(t))+\frac{ab}{{\lambda}}p_{\max}q,
  \end{align*}
  and we can conclude the
  inequality~\eqref{uniform_one_step_inequality_W} again.
  
  As a result of the above reasoning, we conclude
  that~\eqref{uniform_one_step_inequality_W} holds for all $t\in\N$.
  By the comparison principle~\cite[Lem. 3.4]{HK:02}, we conclude from
  \eqref{uniform_one_step_inequality_W} that
  \[
    W(\xi(t))\leq a^tW(\xi(0)) +\frac{1-a^t}{1-a}\tilde bq\leq
    a^tW(\xi(0))+\frac{\tilde b}{1-a}q. 
  \]
  Recall the definition of the extended state $\xi$ and the fact
  that $p_{\min}\norm{x}\leq V(\xi)\leq p_{\max}\norm{x}$ for any
  $\xi\in\calX$. We have
  \begin{align*}
    p_{\min} \norm{x(t)}\leq
    V(\xi)
    &=\frac{W(\xi(t))}{U(\xi(t))}\leq W(\xi(t))
    \\
    &\leq a^tW(\xi(0))+\frac{\tilde b}{1-a} q
    \\
    &=a^tU(\xi(0))V(\xi(0))+\frac{\tilde b}{1-a} q
    \\
    &\leq a^tbp_{\max}\norm{x_0}+\frac{\tilde b}{1-a} q.
  \end{align*}
  Dividing both sides by $p_{\min}$ yields~\eqref{ISS-estimate},
  proving the result.
\end{proof}

\end{document}